\DeclareSymbolFont{cyrletters}{OT2}{wncyr}{m}{n}
\DeclareMathSymbol{\Sha}{\mathalpha}{cyrletters}{"58}
\newcommand{\op}[1]{\operatorname{#1}}
\numberwithin{equation}{section}
\theoremstyle{plain}
\newtheorem{Th}{Theorem}[section]
\newtheorem{Lemma}[Th]{Lemma}
\newtheorem{Cor}[Th]{Corollary}
\newtheorem{Prop}[Th]{Proposition}
 \theoremstyle{definition}
\newtheorem{Def}[Th]{Definition}
\newtheorem{?}[Th]{Problem}
\newtheorem{hyp}{Hypothesis}
\newcommand*{\rom}[1]{\uppercase\expandafter{\romannumeral #1\relax}}
\newcommand{\Q}{\mathbb{Q}}
\newcommand{\Z}{\mathbb{Z}}
\newcommand{\F}{\mathbb{F}}
\newcommand{\Sel}{\operatorname{Sel}}
\DeclareSymbolFont{cyrletters}{OT2}{wncyr}{m}{n}
\newcommand\mtx[4] { \left( {\begin{array}{cc}
   #1 & #2 \\
   #3 & #4 \\
  \end{array} } \right)}
  \newcommand{\closure}[2][3]{%
{}\mkern#1mu\overline{\mkern-#1mu#2}}
\DeclareFontFamily{U}{wncy}{}
    \DeclareFontShape{U}{wncy}{m}{n}{<->wncyr10}{}
    \DeclareSymbolFont{mcy}{U}{wncy}{m}{n}
    \DeclareMathSymbol{\Sh}{\mathord}{mcy}{"58}
\begin{document}

\title{Euler Characteristics and their Congruences for Multi-signed Selmer Groups}

\author{Anwesh Ray}
\address{Department of Mathematics \\ University of British Columbia \\
  Vancouver BC, V6T 1Z2, Canada.} 
  \email{anweshray@math.ubc.ca}
\author{R. Sujatha}
\address{Department of Mathematics \\ University of British Columbia \\
  Vancouver BC, V6T 1Z2, Canada.} 
  \email{sujatha@math.ubc.ca} 
  
\maketitle

\begin{abstract}

The notion of the truncated Euler characteristic for Iwasawa modules is a generalization of the the usual Euler characteristic to the case when the cohomology groups are not finite. Let $p$ be an odd prime, $E_1$ and $E_2$ be elliptic curves over a number field $F$ with semistable reduction at all primes $v|p$ such that the $\op{Gal}(\closure{F}/F)$-modules $E_1[p]$ and $E_2[p]$ are irreducible and isomorphic. We compare the Iwasawa invariants of certain imprimitive multisigned Selmer groups of $E_1$ and $E_2$. Leveraging these results, congruence relations for the truncated Euler characteristics associated to these Selmer groups over certain $\Z_p^m$-extensions of $F$ are studied. Our results extend earlier congruence relations for elliptic curves over $\Q$ with good ordinary reduction at $p$.
    
\end{abstract}
\maketitle
\section{Introduction}
\par The Iwasawa theory of Galois representations, especially those arising from elliptic curves and Hecke eigencuspforms, affords deep insights into the arithmetic of such objects. Let $p$ be a fixed odd prime. Mazur \cite{mazur72} and Greenberg \cite{greenbergIT} initiated the Iwasawa theory of $p$-ordinary elliptic curves $E$ defined over $\Q$. The main object of study is the $p^{\infty}$-Selmer group over the cyclotomic $\Z_p$-extension, denoted $\op{Sel}(E/\Q^{\op{cyc}})$. Let $\Gamma$ denote the Galois group of the cyclotomic $\Z_p$-extension over $\Q$. When $E$ has good ordinary reduction at the prime $p$, it was conjectured by Mazur that the Selmer group $\op{Sel}(E/\Q^{\op{cyc}})$ is cotorsion as a module over the Iwasawa algebra $\Z_p[[\Gamma]]$. This is now a celebrated theorem of Kato \cite{katozeta}. The corresponding theory for $p$-supersingular elliptic curves was initiated by Perrin-Riou, in \cite{pr90} and has since gained considerable momentum, see \cite{kurihara1, kurihara2, kobayashi, pollack, iovita, leiloefflerzerbes, sprungpair,BDKaust,KO}. If $E$ has supersingular reduction at $p$, then $\op{Sel}(E/\Q^{\op{cyc}})$ is no longer $\Z_p[[\Gamma]]$-cotorsion. Kobayashi considered plus and minus Selmer groups, which were defined using plus and minus norm groups, that were introduced in \cite{pr90}. These signed Selmer groups are cotorsion over $\Z_p[[\Gamma]]$.
\par Suppose $E$ is an elliptic curve defined over a number field $F$. Let $\Sigma_p$ denote the set of primes of $F$ above $p$. In the case when $E$ has good reduction at all primes in $\Sigma_p$, and $F_v\simeq \Q_p$ for all $v\in \Sigma_p$ such that $E$ is supersingular at $v$, generalizations of the plus/minus Selmer groups have been studied in \cite{BDKmixedreduction, sujathafilipo}. The case where $E$ has semistable reduction at the primes in $\Sigma_p$ (i.e. good ordinary, multiplicative or good supersingular) has also been considered, see \cite{leilim,leilim2}. In this paper, we assume that $E$ has semistable reduction at the primes in $\Sigma_p$. In this \textit{mixed}-reduction setting, a plethora of Selmer groups depending on the reduction-types at the primes in $\Sigma_p$ can be defined. This collection of Selmer groups is referred to as the multi-signed Selmer groups. 
\par Denote by $\Sigma_{\op{ss}}(E)=\{\mathfrak{p}_1,\dots, \mathfrak{p}_d\}$ the set of primes of $F$ above $p$ at which $E$ has good supersingular reduction. Associated to each vector $\ddag\in \{+,-\}^d$, there is a multi-signed Selmer group $\op{Sel}^{\ddag}(E/F^{\op{cyc}})$, defined in section $\ref{section2}$. There are $2^d$ vectors $\ddag$ and hence $2^d$ Selmer groups to consider.
\par Two elliptic curves $E_1$ and $E_2$ over $F$ are said to be $p$-congruent if their associated residual representations are isomorphic, i.e., $E_1[p]$ and $E_2[p]$ are isomorphic as Galois modules. It is of particular interest in Iwasawa theory to study the relationship between Iwasawa invariants of the Selmer groups of $p$-congruent elliptic curves. Such investigations were initiated by Greenberg and Vatsal \cite{greenbergvatsal}, who considered $p$-congruent, $p$-ordinary elliptic curves $E_1$ and $E_2$ defined over $\Q$. They showed that the main conjecture is true for $E_1$ if and only if the main conjecture is true for $E_2$. To this end, they study the relationship between the algebraic and analytic Iwasawa invariants of $E_1$ and $E_2$. Their method involves examining the algebraic structure of certain imprimitive Selmer groups associated to $E_1$ and $E_2$ in comparing the Iwasawa-invariants. Let $\Sigma_0$ be the set of primes $v\nmid p$ of $F$ at which $E_1$ or $E_2$ has bad reduction. Greenberg and Vatsal compare the $\Sigma_0$-imprimitive Selmer groups of $E_1$ and $E_2$. Their results were generalized to the $p$-supersingular case for the plus and minus Selmer groups by B.D. Kim in \cite{BDKim} and Ponsinet \cite{ponsinet}. 
\par In this paper, the above mentioned results of Greenberg-Vatsal and B.D. Kim are generalized to the mixed reduction setting. It is shown that if the $\mu$-invariant of a certain imprimitive multi-signed Selmer group of $E_1$ is zero, then the $\mu$-invariant of the imprimitive multisigned Selmer group of $E_2$ is also zero. Further, it is shown that if these $\mu$-invariants are both zero, then the imprimitive $\lambda$-invariants match up. The reader may refer to Theorem $\ref{equalitymulambda}$ for the precise statement. In fact, we are able to do more than simply generalize the aforementioned results. Over the rational numbers, if Theorem $\ref{equalitymulambda}$ is specialized to the case of $p$-ordinary reduction (resp. $p$-supersingular reduction), we obtain refinements of the results Greenberg-Vatsal (resp. B.D. Kim, Ponsinet). This is due to the fact that we show that an \textit{optimal set} of primes $\Sigma_1$ may be chosen so that the imprimitive Iwasawa invariants match with respect to $\Sigma_1$. The set of primes $\Sigma_1$ is optimal in the sense that it is smaller than the full set of primes $\Sigma_0$ considered by Greenberg-Vatsal, B.D. Kim and Ponsinet, defined in the previous paragraph.

\par The Euler characteristic of the $p$-primary Selmer group an elliptic curve over $\Q$ with $p$-ordinary reduction may be defined when the Mordell Weil group is finite. Furthermore, it is of significant interest from the point of view of the $p$-adic Birch and Swinnerton-Dyer conjecture. The truncated Euler characteristic is a derived version of the usual Euler characteristic, which may be defined in the positive rank setting. Leveraging our results on $\mu$-invariants and imprimitive $\lambda$-invariants, we prove congruence relations for the truncated Euler characteristics of multi-signed Selmer groups of $p$-congruent elliptic curves. First, this is done over the cyclotomic $\Z_p$-extension of $F$ (cf. Theorem $\ref{gammacongruence}$). It is shown that the truncated Euler characteristic of $E_1$ is related to that of $E_2$ after one accounts for the auxiliary set of primes $\Sigma_1$. The smaller the set of primes $\Sigma_1$, the more refined the congruence relation between truncated Euler characteristics is. This is why it is of considerable importance that the set of primes $\Sigma_1$ be carefully chosen to be as small as possible. Next, we extend such reults to the multi-signed Selmer groups of certain $\Z_p^m$-extensions and we compare the Akashi series for $p$-congruent elliptic curves over such $\Z_p^m$-extensions (cf. Theorem $\ref{Gcongruence}$). Our results are informed by explicit examples which are listed in section $\ref{examples}$.
\newline\textit{Acknowledgements:} The second author gratefully acknowledges support from NSERC Discovery grant 2019-03987.
\section{Preliminaries}\label{section2}

\par Throughout, we fix an odd prime number $p$ and number field $F$. Let $F^{\op{cyc}}$ be the cyclotomic $\Z_p$-extension of $F$. Denote by $\Sigma_p$ the set of primes of $F$ above $p$. Let $E$ be an elliptic curve over $F$ and denote by $\Sigma_{\op{ss}}(E)=\{\mathfrak{p}_1,\dots, \mathfrak{p}_d\}$ the set of primes in $\Sigma_p$ at which $E$ has supersingular reduction. Let $\mathcal{F}_{\infty}/F$ be a Galois extension of $F$ containing $F^{\op{cyc}}$ such that $\op{Gal}(\mathcal{F}_{\infty}/F)\simeq \Z_p^{m}$ for an integer $m\geq 1$. Set $\op{G}:=\op{Gal}(\mathcal{F}_{\infty}/F)$, $H:=\op{Gal}(\mathcal{F}_{\infty}/F^{\op{cyc}})$ and let $\Gamma$ be the Galois group $\op{Gal}(F^{\op{cyc}}/F)$, which is identified with $\op{G}/\op{H}$. Associated to any pro-$p$ group $\mathcal{G}$, the Iwasawa algebra $\Z_p[[\mathcal{G}]]$ is the inverse limit $\varprojlim_U \Z_p[\mathcal{G}/U]$, where $U$ ranges over all open normal subgroups of $\mathcal{G}$. Set $\mathcal{F}_0:=F$ and for $n\geq 1$, let $\mathcal{F}_n$ denote the unique subextension $F\subseteq \mathcal{F}_n\subset \mathcal{F}_{\infty}$ such that $\op{Gal}(\mathcal{F}_n/F)\simeq (\Z/p^n\Z)^m$. We introduce the following hypothesis on the elliptic curve $E$.
\begin{hyp}\label{hypothesis}Throughout, $E$ is required to satisfy the following hypotheses:
\begin{enumerate}
\item $E$ has semistable reduction at all primes $v\in \Sigma_p$ (i.e., either good ordinary, good supersingular or bad multiplicative reduction).
\item The residual representation $E[p]$ is a $2$-dimensional $\F_p$-vector space which is irreducible as a $\op{Gal}(\closure{F}/F)$-module.
    \item For every prime $v\in \Sigma_{\op{ss}}(E)$, the completion $F_v$ is isomorphic to $\Q_p$.
    \item For $v\in \Sigma_{\op{ss}}(E)$, set $a_v(E):=1+p-\# \widetilde{E}_v(\F_p)$, where $\widetilde{E}_v$ is the reduction of $E$ at $v$. Assume that $a_v(E)=0$ for all $v\in \Sigma_{\op{ss}}(E)$.
\end{enumerate}
\end{hyp}

\begin{hyp}\label{Finfhyp} Let $E$ be an elliptic curve over $F$ satisifying Hypothesis $\eqref{hypothesis}$. We introduce a hypothesis on the tuple $(E, \mathcal{F}_{\infty})$.
\begin{enumerate}
    \item\label{onehyp2} The extension $\mathcal{F}_{\infty}/F$ is unramified away from a finite set of primes.
    \item\label{twohyp2} Let $\mathfrak{p}_i|p$ be a prime of $F$ at which $E$ has supersingular reduction. Let $n\geq 1$ and let $\eta$ be a prime of $\mathcal{F}_{n}$ above $\mathfrak{p}_i$. Then, $\mathcal{F}_{n,\eta}$ is isomorphic to $K_n$ for some finite unramified extension $K$ of $\Q_p$. Here, $K_n$ is the $n$-th layer in the cyclotomic $\Z_p$-extension of $K$.
\end{enumerate}
\end{hyp} The condition $\eqref{twohyp2}$ ensures that the signed norm groups of $E$ over $\mathcal{F}_{n,\eta}$ may be defined. Thus, the condition is required in defining multi-signed Selmer groups for the extension $\mathcal{F}_{\infty}/F$, see Definition $\ref{defmultiselmer}$. The reader is referred to example $\eqref{example2}$ wherein a $\Z_p^2$-extension $\mathcal{F}_{\infty}/F$ is considered for which condition $\eqref{twohyp2}$ is satisfied.
For an elliptic curve $E$ and $\Z_p^m$-extension $\mathcal{F}_{\infty}$ satisfying the above hypotheses, we introduce the multisigned Selmer groups. We introduce local conditions at each of the primes $\mathfrak{p}_i\in \Sigma_{\op{ss}}(E)$. By Hypothesis $\eqref{hypothesis}$, the local field $F_{\mathfrak{p}_i}$ is isomorphic to $\Q_p$. Denote by $\widehat{E}_{\mathfrak{p}_i}$ the formal group of $E$ over $F_{\mathfrak{p}_i}$. For any algebraic extension $L$ of $F_{\mathfrak{p}_i}\simeq \Q_p$, denote by $\widehat{E}_{\mathfrak{p}_i}(L):=\widehat{E}_{\mathfrak{p}_i}(\mathfrak{m}_L)$, where $\mathfrak{m}_L$ is the maximal ideal of $\mathcal{O}_L$. Let $K$ be a finite unramified extension of $\Q_p$ and $K^{\op{cyc}}$ the cyclotomic $\Z_p$-extension of $K$. Denote by $K_n$ the unique subextension of $K^{\op{cyc}}/K$ of degree $p^n$. Kobayashi introduced plus and minus norm groups:
\[ \widehat{E}_{\mathfrak{p}_i}^+(K_n) :=
\left\{P\in \widehat{E}_{\mathfrak{p}_i}(K_n)  \mid \op{tr}_{n/m+1} (P)\in \widehat{E}_{\mathfrak{p}_i}(K_m),   \text{ for }0\leq m < n\text{ and }m \text{ even }\right\},\]
\[\widehat{E}_{\mathfrak{p}_i}^-(K_n):=\left\{P\in \widehat{E}_{\mathfrak{p}_i}(K_n) \mid \op{tr}_{n/m+1} (P)\in \widehat{E}_{\mathfrak{p}_i}(K_m),\text{ for }0\leq m < n\text{ and }m \text{ odd }\right\},\]
where $\op{tr}_{n/m+1}:\widehat{E}_{\mathfrak{p}_i}(K_n)\rightarrow \widehat{E}_{\mathfrak{p}_i}(K_{m+1})$ denotes the trace map with respect to the formal group law on $\widehat{E}_{\mathfrak{p}_i}$. 
\par Let $\ddag=(\ddag_1,\dots, \ddag_d)\in \{+,-\}^d$ be a multi-signed vector, so that each component $\ddag_i$ is either a $+$ or $-$ sign. For each finite prime $v\notin \Sigma_{\op{ss}}(E)$, and each integer $n\geq 1$, set 
\begin{equation}\label{Hvdef}\mathcal{H}_v(\mathcal{F}_n, E[p^{\infty}]):=\prod_{\eta|v} \frac{H^1(\mathcal{F}_{n,\eta}, E[p^{\infty}])}{E(\mathcal{F}_{n,\eta})\otimes \Q_p/\Z_p},\end{equation}
where $\eta$ runs through the primes of $\mathcal{F}_n$ above $v$. For each prime $\mathfrak{p}_i\in \Sigma_{\op{ss}}(E)$, set 
\begin{equation}\label{Hvpmdef}\mathcal{H}_{\mathfrak{p}_i}^{\pm}(\mathcal{F}_n, E[p^{\infty}]):=\prod_{\eta|\mathfrak{p}_i} \frac{H^1(\mathcal{F}_{n,\eta}, E[p^{\infty}])}{\widehat{E}_{\mathfrak{p}_i}^{\pm}(\mathcal{F}_{n,\eta})\otimes \Q_p/\Z_p}.\end{equation}
Let $\eta$ be a prime of $\mathcal{F}_{\infty}$ above $\mathfrak{p}_i$. By assumption $\eqref{twohyp2}$, $\mathcal{F}_{n,\eta}$ is isomorphic to $K_n$ for some finite unramified extension $K$ over $\Q_p$. Thus the norm groups $\widehat{E}_{\mathfrak{p}_i}^{\pm} (\mathcal{F}_{n,\eta})$ are defined. We now come to the definition of the multisigned Selmer groups. 
\begin{Def}\label{defmultiselmer}Let $d$ be the cardinality of $\Sigma_{\op{ss}}(E)$ and let $\ddag=(\ddag_1,\dots, \ddag_d)$ be a multi-signed vector, so that each component $\ddag_i$ is either a $+$ sign or $-$ sign. Let $\Sigma$ be a finite primes of $F$ containing $\Sigma_p$ and the primes at which $E$ has bad reduction. Let $\mathfrak{p}_1,\dots, \mathfrak{p}_d$ be the primes in $\Sigma_{\op{ss}}(E)$. For $i=1,\dots, d$, let $\mathcal{H}_{\mathfrak{p}_i}^{\ddag_i}(\mathcal{F}_n, E[p^{\infty}])$ be the group defined above, see $\eqref{Hvpmdef}$. The multi-signed Selmer group $\op{Sel}^{\ddag}(E/\mathcal{F}_n)$ is the kernel of the map:
\[\Phi_{E,\mathcal{F}_n}^{\ddag}:H^1(\mathcal{F}_n,E[p^{\infty}])\rightarrow \prod_{v\in \Sigma\backslash \Sigma_{\op{ss}}(E)} \mathcal{H}_v(\mathcal{F}_n, E[p^{\infty}])\times \prod_{i=1}^d \mathcal{H}_{\mathfrak{p}_i}^{\ddag_i}(\mathcal{F}_n, E[p^{\infty}]).\] Let $\op{Sel}^{\ddag}(E/\mathcal{F}_{\infty})$ be the direct limit $\varinjlim_n \op{Sel}^{\ddag}(E/\mathcal{F}_n)$ and write $\op{X}^{\ddag}(E/\mathcal{F}_{\infty})$ for its Pontryagin-dual.
\end{Def}
\par For practical purposes, it is convenient to work with an alternative description of $\op{Sel}^{\ddag}(E/F^{\op{cyc}})$. For $v\in \Sigma_p\backslash \Sigma_{\op{ss}}(E)$ denote by $E_v$ the curve over the local field $F_v$. Since $E_v$ has good ordinary or multiplicative reduction, it fits in a short exact sequence of $\op{Gal}(\closure{F_v}/F_v)$-modules:
\begin{equation}\label{ses}0\rightarrow C_v\rightarrow E_v[p^{\infty}]\rightarrow D_v\rightarrow 0.\end{equation}
Here, $C_v$ and $D_v$ are both of corank one over $\Z_p$ with the property that $C_v$ is a divisible subgroup and $D_v$ is the maximal subgroup on which $\op{I}_v:=\op{Gal}(\closure{F_v}/F_v^{\op{nr}})$ acts via a finite order quotient. In fact, $D_v$ is specified as follows:
\begin{equation}\label{Ddef}D_v=\begin{cases}
\widetilde{E_v}[p^{\infty}]\text{ if }E_v \text{ has good ordinary reduction},\\
\Q_p/\Z_p(\phi)\text{ if }E_v \text{ has multiplicative reduction,}
\end{cases}\end{equation}
where $\phi$ is an unramified character, which is trivial if and only if $E_v$ has split multiplicative reduction. For further details, the reader is referred to the discussions in \cite[p. 150]{CG} and \cite[section 2]{greenbergvatsal}.
\par Note that above each prime $v$ of $F$, there are finitely many primes $\eta|v$ of $F^{\op{cyc}}$. For $\mathfrak{p}_i\in \Sigma_{\op{ss}}(E)$, set:
\[\mathcal{H}_{\mathfrak{p}_i}^{\pm}(F^{\op{cyc}}, E[p^{\infty}]):=\prod_{\eta|\mathfrak{p}_i} \frac{H^1(F_{\eta}^{\op{cyc}}, E[p^{\infty}])}{\widehat{E}^{\pm}(F^{\op{cyc}}_{\eta})\otimes \Q_p/\Z_p},\]where $\eta$ runs through the primes of $F^{\op{cyc}}$ above $\mathfrak{p}_i$. For $v\in \Sigma\backslash \Sigma_{\op{ss}}(E)$, set 
\begin{equation}\label{defhv}\mathcal{H}_{v}(F^{\op{cyc}}, E[p^{\infty}]):=\begin{cases}\prod_{\eta|v} \op{im}\left(H^1(F_{\eta}^{\op{cyc}}, E[p^{\infty}])\rightarrow H^1(\op{I}_{\eta}, D_v)\right)\text{ if }v\in \Sigma_p\backslash \Sigma_{\op{ss}}(E),\\
\prod_{\eta|v} H^1(F_{\eta}^{\op{cyc}}, E[p^{\infty}])\text{ if }v\in \Sigma\backslash \Sigma_{p},
\end{cases}\end{equation}
where $\op{I}_{\eta}$ is the inertia subgroup of $\op{Gal}(\closure{F_{\eta}^{\op{cyc}}}/F_{\eta}^{\op{cyc}})$. The Selmer group $\op{Sel}^{\ddag}(E/F^{\op{cyc}})$ coincides with the kernel of the restriction map

\[\Phi_{E,F^{\op{cyc}}}^{\ddag}:H^1(F^{\op{cyc}},E[p^{\infty}])\rightarrow \prod_{v\in \Sigma\backslash \Sigma_{\op{ss}}(E)} \mathcal{H}_v(F^{\op{cyc}}, E[p^{\infty}])\times \prod_{i=1}^d \mathcal{H}_{\mathfrak{p}_i}^{\ddag_i}(F^{\op{cyc}}, E[p^{\infty}]) .\]For further details, see \cite[p.32,42]{greenbergvatsal} and the discussion in \cite[section 5]{greenbergIT}. Let $\Sigma_0$ be a finite set of primes of $F$ which does not contain any prime $v\in \Sigma_p$.
The $\Sigma_0$-imprimitive Selmer group $\op{Sel}^{\Sigma_0,\ddag}(E/F^{\op{cyc}})$ is the kernel of the restriction map

\[\Phi_{E,F^{\op{cyc}}}^{\Sigma_0,\ddag}:H^1(F^{\op{cyc}},E[p^{\infty}])\rightarrow \prod_{v\in \Sigma\backslash (\Sigma_{\op{ss}}(E)\cup \Sigma_0)} \mathcal{H}_v(F^{\op{cyc}}, E[p^{\infty}])\times \prod_{i=1}^d \mathcal{H}_{\mathfrak{p}_i}^{\ddag_i}(F^{\op{cyc}}, E[p^{\infty}]) .\] The Pontryagin dual of $\op{Sel}^{\Sigma_0, \ddag}(E/F^{\op{cyc}})$ is denoted by $\op{X}^{\Sigma_0, \ddag}(E/F^{\op{cyc}})$.
The next Proposition follows from \cite[Proposition 2.1]{greenbergvatsal} and \cite[Proposition 4.6]{leilim2}.
\begin{Prop}\label{restrictionmapsurj}
Assume that the Selmer group $\op{Sel}^{\Sigma_0, \ddag}(E/F^{\op{cyc}})$ is cotorsion as a $\Z_p[[\Gamma]]$-module. Then, the maps $\Phi_{E,F^{\op{cyc}}}^{\ddag}$ and $\Phi_{E,F^{\op{cyc}}}^{\Sigma_0,\ddag}$ are surjective.
\end{Prop}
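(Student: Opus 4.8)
The assertion is a standard surjectivity-of-the-global-to-local-map statement in Greenberg-style Iwasawa theory, and the plan is to reduce it to the cited results by a comparison argument between the primitive and imprimitive defining maps. First I would record the tautological exact sequence relating the two Selmer groups: since $\Sigma_0$ contains no prime above $p$, the map $\Phi_{E,F^{\op{cyc}}}^{\ddag}$ factors through $\Phi_{E,F^{\op{cyc}}}^{\Sigma_0,\ddag}$ followed by projection away from the extra factors $\prod_{v\in \Sigma_0}\mathcal{H}_v(F^{\op{cyc}},E[p^{\infty}])$, so that there is an exact sequence
\[
0\to \op{Sel}^{\ddag}(E/F^{\op{cyc}})\to \op{Sel}^{\Sigma_0,\ddag}(E/F^{\op{cyc}})\xrightarrow{\ \psi\ } \prod_{v\in \Sigma_0}\mathcal{H}_v(F^{\op{cyc}},E[p^{\infty}]).
\]
The key input from \cite[Proposition 2.1]{greenbergvatsal} and \cite[Proposition 4.6]{leilim2} is that each local term $\mathcal{H}_v(F^{\op{cyc}},E[p^{\infty}])$ for $v\in\Sigma\setminus(\Sigma_{\op{ss}}(E))$ is a cofinitely generated cotorsion $\Z_p[[\Gamma]]$-module, in fact $\Z_p[[\Gamma]]$-cofree on a description via local Euler characteristics; and that these $\mathcal{H}_v$ have no proper $\Z_p[[\Gamma]]$-submodule of finite index (equivalently, no finite $\Gamma$-subquotient in the relevant sense), which is what forces surjectivity of any global map whose cokernel is $\Z_p[[\Gamma]]$-cotorsion into them.

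Next I would run the core argument for $\Phi_{E,F^{\op{cyc}}}^{\Sigma_0,\ddag}$. Using the global Euler characteristic formula (Poitou–Tate / Greenberg's computation over $F^{\op{cyc}}$) together with the hypothesis that $\op{Sel}^{\Sigma_0,\ddag}(E/F^{\op{cyc}})$ is $\Z_p[[\Gamma]]$-cotorsion, one computes that the cokernel of $\Phi_{E,F^{\op{cyc}}}^{\Sigma_0,\ddag}$ is $\Z_p[[\Gamma]]$-cotorsion. The target $\prod_{v}\mathcal{H}_v\times\prod_i \mathcal{H}_{\mathfrak{p}_i}^{\ddag_i}$ is a product of cofinitely generated $\Z_p[[\Gamma]]$-modules each of which is cofree (this is precisely where Hypothesis \eqref{hypothesis}(3),(4) and the unramified-cyclotomic description in Hypothesis \eqref{Finfhyp}(2) enter, guaranteeing Kobayashi's local results apply to the signed terms $\mathcal{H}_{\mathfrak{p}_i}^{\ddag_i}$, cf. \cite[Prop. 4.6]{leilim2}). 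A cotorsion submodule of a cofree module is zero; hence the cokernel vanishes and $\Phi_{E,F^{\op{cyc}}}^{\Sigma_0,\ddag}$ is surjective.

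Finally, to deduce surjectivity of $\Phi_{E,F^{\op{cyc}}}^{\ddag}$, note that by Proposition's hypothesis and the exact sequence above, $\op{Sel}^{\ddag}(E/F^{\op{cyc}})$ is also $\Z_p[[\Gamma]]$-cotorsion, so the same Euler-characteristic count shows $\op{coker}(\Phi_{E,F^{\op{cyc}}}^{\ddag})$ is cotorsion; and its target is again a product of cofree $\Z_p[[\Gamma]]$-modules, so the cokernel is zero. Alternatively, and more cheaply, surjectivity of $\Phi_{E,F^{\op{cyc}}}^{\Sigma_0,\ddag}$ combined with surjectivity of the projection onto the primitive local factors (which is immediate, since one is just forgetting the $\Sigma_0$-components) gives surjectivity of $\Phi_{E,F^{\op{cyc}}}^{\ddag}$ directly. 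The main obstacle is bookkeeping rather than conceptual: one must check that the signed local conditions $\mathcal{H}_{\mathfrak{p}_i}^{\ddag_i}(F^{\op{cyc}},E[p^{\infty}])$ genuinely satisfy the cofreeness / no-finite-submodule property over $\Z_p[[\Gamma]]$ in the mixed-reduction setting, which is exactly the content imported from \cite[Proposition 4.6]{leilim2} and ultimately rests on Kobayashi's computation of the ranks of $\widehat{E}^{\pm}(K_n)$ over unramified $K$ together with Hypothesis \eqref{hypothesis}. Once that is in hand, the surjectivity is formal.
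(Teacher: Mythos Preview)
The paper offers no independent proof, simply citing \cite[Proposition~2.1]{greenbergvatsal} and \cite[Proposition~4.6]{leilim2}; your sketch attempts to reconstruct the argument behind those references, and its broad shape --- deduce that the cokernel is cotorsion from the hypothesis via a corank count, then argue the cokernel must vanish, and finally pass from the imprimitive to the primitive map --- is reasonable, and the last reduction is correct.

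There is, however, a genuine gap in the key step. You claim that every local factor of the target is $\Z_p[[\Gamma]]$-cofree, so that the Pontryagin dual of the cokernel, being torsion, embeds in a free module and hence vanishes. This is false for the primes $v\nmid p$: there $\mathcal{H}_v(F^{\op{cyc}},E[p^\infty])=\prod_{\eta\mid v}H^1(F_\eta^{\op{cyc}},E[p^\infty])$ has finite $\Z_p$-corank (local Euler characteristic away from $p$), so its Pontryagin dual is a finitely generated $\Z_p$-module and hence $\Z_p[[\Gamma]]$-\emph{torsion}, not free. (Indeed, you describe these factors as simultaneously ``cotorsion'' and ``$\Z_p[[\Gamma]]$-cofree'', which is only possible for the zero module.) The cofreeness argument does apply to the factors at primes above $p$ --- both the Greenberg ordinary/multiplicative conditions and the signed supersingular ones, and that is where the genuine input from Kobayashi and \cite{leilim2} enters --- but it does not cover the full target. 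The proofs in the cited references instead run the Cassels--Poitou--Tate sequence directly: under the cotorsion hypothesis one gets $H^2(F_\Sigma/F^{\op{cyc}},E[p^\infty])=0$, and the cokernel of $\Phi$ is identified with the dual of a compact Selmer group for $T_pE$ with the orthogonal local conditions, which is then shown to vanish. Merely knowing that the cokernel is cotorsion is not enough; you need this finer duality input to handle the contribution from primes away from $p$.
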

Thus we have a short exact sequence relating the $\Sigma_0$-imprimitive Selmer group $\op{Sel}^{\Sigma_0,\ddag}(E/F^{\op{cyc}})$ with the Selmer group $\op{Sel}^{\ddag}(E/F^{\op{cyc}})$
\[0\rightarrow \op{Sel}^{\ddag}(E/F^{\op{cyc}})\rightarrow \op{Sel}^{\Sigma_0,\ddag}(E/F^{\op{cyc}})\xrightarrow{\Phi_{E,F^{\op{cyc}}}^{\ddag, \Sigma_0}}\prod_{v\in \Sigma_0} \mathcal{H}_v(F^{\op{cyc}},E[p^{\infty}])\rightarrow 0.\]
 Choose a topological generator $\gamma$ of $\Gamma$ and identify $\gamma-1$ with $T$ in choosing an isomorphism $\Z_p[[\Gamma]]\simeq \Z_p[[T]]$. A polynomial $f(T)\in \Z_p[T]$ is said to be \textit{distinguished} if it is a monic polynomial and all non-leading coefficients are divisble by $p$. By the structure theorem of finitely generated torsion $\Z_p[[T]]$-modules, there is a pseudo-isomorphism 
\[\op{X}^{\ddag}(E/F^{cyc})\sim \left(\bigoplus_{i=1}^n \Z_p[[T]]/(f_i(T))\right)\oplus \left(\bigoplus_{j=1}^m \Z_p[[T]]/(p^{\mu_j}) \right).\]For, $i=1,\dots, n$, the elements $f_i(T)$ above are distinguished polynomials and the product 
$f_{E}^{\ddag}(T):=p^{\sum_j \mu_j} \prod_{i} f_i(T)$ is called the characteristic polynomial. The signed Iwasawa invariants are defined by $\lambda_E^{\ddag}:=\op{deg} f_E^{\ddag}(T)$ and $\mu_E^{\ddag}:=\sum_{j=1}^m \mu_j$. Denote by $f_E^{\Sigma_0,\ddag}(T)$, $\lambda_E^{\Sigma_0,\ddag}$ and $\mu_E^{\Sigma_0,\ddag}$ the characteristic polynomial, $\lambda$-invariant and $\mu$-invariant of the imprimitive Selmer group $\op{X}^{\Sigma_0,\ddag}(E/F^{\op{cyc}})$.

\section{The Truncated Euler Characteristic}

\par In this section, we recall the notion of the truncated Euler characteristic, which is a generalization of the usual Euler characteristic. We then discuss explicit formulas for the truncated Euler characteristic, as predicted by the $p$-adic Birch and Swinnerton-Dyer conjecture. For a more detailed exposition, the reader may refer to \cite[section 3]{CSSLinks} and \cite{Zerbes}. For a discrete $p$-primary cofinitely generated $\Z_p[[\Gamma]]$-module $M$ for which the cohomology groups $H^i(\Gamma, M)$ have finite order, the Euler characteristic is defined to be the quotient \[\chi(\Gamma, M):=\frac{\# H^0(\Gamma, M)}{\# H^1(\Gamma, M)}.\] For an elliptic curve $E$ with potentially good ordinary reduction at all primes above $p$, the Euler characteristic of the Selmer group is defined, provided the Mordell-Weil group is finite. This definition does not extend to the case when the Mordell-Weil group of $E$ is infinite. The natural substitute is the \textit{truncated Euler characteristic}. 
\begin{Def}
Let $M$ be a discrete $p$-primary $\Gamma$-module let $\phi_M$ be the natural map 
\[\phi_M: H^0\left(\Gamma, M\right)=M^{\Gamma}\rightarrow M_{\Gamma}\simeq H^1\left(\Gamma,M\right)\]
for which $\phi_M(x)$ is the residue class of $x$ in $M_{\Gamma}$. The module $M$ is said to have finite truncated Euler-characteristic if both $\operatorname{ker}(\phi_M)$ and $\operatorname{cok}(\phi_M)$ are finite. In this case, the truncated Euler characteristic $\chi_t(\Gamma, M)$ is defined by \[\chi_t\left(\Gamma, M\right):=\frac{\#\op{ker}(\phi_M)}{\#\op{cok}(\phi_M)}.\]
\end{Def}
\begin{Lemma}
Assume that $M$ is a discrete $\Z_p[[\Gamma]]$-module. Assume that $M$ is cofinitely generated and cotorsion as a $\Z_p[[\Gamma]]$-module. The Euler characteristic $\chi(\Gamma, M)$ is defined if and only if $r_M=0$.
\end{Lemma}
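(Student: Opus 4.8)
The plan is to reduce the statement to the structure theory of finitely generated modules over $\Lambda := \Z_p[[\Gamma]]$, which we identify with $\Z_p[[T]]$ by sending a fixed topological generator $\gamma$ of $\Gamma$ to $1+T$. Since $\Gamma \cong \Z_p$, we have $H^i(\Gamma, M) = 0$ for all $i \geq 2$, so by the definition of $\chi$ recalled above, $\chi(\Gamma, M)$ is defined precisely when $H^0(\Gamma, M) = M^\Gamma$ and $H^1(\Gamma, M) = M_\Gamma$ are both finite. Put $X := M^\vee$, a finitely generated torsion $\Lambda$-module by hypothesis; Pontryagin duality identifies $(M^\Gamma)^\vee$ with $X/TX$ and $(M_\Gamma)^\vee$ with $X[T]$. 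So the goal becomes: $X/TX$ and $X[T]$ are both finite if and only if $r_M = 0$.

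One half is immediate. Recalling $r_M = \op{corank}_{\Z_p}(M^\Gamma) = \op{rank}_{\Z_p}(X/TX)$, and noting that $X/TX$ is finitely generated over $\Lambda/(T) = \Z_p$, it is finite if and only if this rank vanishes, i.e. if and only if $r_M = 0$. It therefore remains to show that $X[T]$ is finite if and only if $X/TX$ is --- equivalently, that finiteness of $M_\Gamma$ and of $M^\Gamma$ are equivalent. For this I would invoke the structure theorem for finitely generated torsion $\Lambda$-modules: there is a pseudo-isomorphism $X \to Y := \bigoplus_i \Lambda/(g_i)$, the $g_i$ being powers of $p$ or of distinguished irreducible polynomials, with $\prod_i g_i$ generating $\op{char}_\Lambda(X)$. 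Applying the snake lemma to multiplication by $T$ on the short exact sequences built from the (finite) kernel and cokernel of this map, and using that a finite module has finite $T$-torsion and $T$-coinvariants, one reduces finiteness of $X[T]$ and of $X/TX$ to the same questions for $Y$, hence summand by summand to $\Lambda/(g)$. These are explicit: if $g = p^m$ then $(\Lambda/(g))[T] = 0$ and $\Lambda/(g,T) \cong \Z/p^m$; if $g = h(T)^k$ with $h$ a distinguished irreducible different from $T$, then $T$ is a nonzerodivisor on $\Lambda/(g)$ and $\Lambda/(g,T) \cong \Z_p/(h(0)^k)$, which is finite because $h(0)$ is a nonzero multiple of $p$; and if $g = T^k$ then both $(\Lambda/(g))[T]$ and $\Lambda/(g,T)$ are isomorphic to $\Z_p$. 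Hence $X[T]$ and $X/TX$ are each finite exactly when no $g_i$ is a power of $T$, that is, when $T \nmid \op{char}_\Lambda(X)$; but that is again the condition $r_M = 0$, so combining with the previous paragraph finishes the proof.

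This lemma is essentially formal, so there is no real obstacle; the one point that needs a little care is that the structure theorem provides only a \emph{pseudo}-isomorphism rather than an isomorphism, so one must verify that finiteness of $T$-torsion and of $T$-coinvariants is insensitive to finite kernels and cokernels (the snake-lemma step above). Alternatively, one can bypass this bookkeeping entirely by quoting the standard fact that, for the distinguished prime $T \in \Lambda$ and a finitely generated torsion $\Lambda$-module $X$, the groups $X[T]$ and $X/TX$ are finite if and only if $T$ does not divide $\op{char}_\Lambda(X)$.
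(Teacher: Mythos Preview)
Your argument follows the same route as the paper's: dualize to $X = M^\vee$, apply the structure theorem for finitely generated torsion $\Lambda$-modules, and observe that $X/TX$ and $X[T]$ are finite precisely when no elementary factor has the form $\Lambda/(T^k)$, i.e.\ when $T \nmid \op{char}_\Lambda(X)$. Your version is in fact more thorough than the paper's terse sketch, since you handle the pseudo-isomorphism explicitly via the snake lemma and write out the case analysis on summands.

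One slip to fix: in your ``One half is immediate'' paragraph you invoke $r_M = \op{corank}_{\Z_p}(M^\Gamma)$, but this identity is false in general (for $X = \Lambda/(T^2)$ one has $r_M = 2$ while $X/TX \cong \Z_p$ has $\Z_p$-rank $1$), and in the paper it is only asserted later, in Lemma~\ref{TECbasiclemma}, under the extra hypothesis that $\phi_M$ has finite kernel and cokernel. Fortunately that paragraph is redundant: your structure-theorem computation already proves directly that both $X/TX$ and $X[T]$ are finite if and only if $r_M = 0$, so you can simply delete the offending sentence and the proof stands.
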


\begin{proof}
Let $X$ be the Pontryagin dual of $M$. The invariant submodule $M^{\Gamma}$ is dual to $X_{\Gamma}$ and the quotient $M_{\Gamma}$ is dual to $X^{\Gamma}$. By an application of the structure theorem for finitely generated torsion $\Z_p[[\Gamma]]$-modules, there is a pseudoisomorphism
\[X\sim\bigoplus_{i=1}^m \Z_p[[\Gamma]]/(f_i(T))\]for some elements $f_i(T)\in \Z_p[[\Gamma]]$. The module $X_{\Gamma}$ may be identified with $X/TX$ and $X^{\Gamma}$ is the kernel of the multiplication by $T$ map $\times T: X\rightarrow X$. It is easy to see that the groups $X_{\Gamma}$ and $X^{\Gamma}$ are finite precisely when $T\nmid f_i(T)$ for $i=1,\dots, m$. Therefore, $X_{\Gamma}$ and $X^{\Gamma}$ are finite precisely when $r_M=0$.
\end{proof}
The following lemma is a criterion for the truncated Euler characteristic to be well defined.
 \begin{Lemma}\label{truncdefined}
Assume that $M$ is a $p$-primary, discrete $\Z_p[[\Gamma]]$-module. Let $X$ be the Pontryagin dual of $M$. Assume that $X$ is a finitely generated torsion $\Z_p[[\Gamma]]$-module. Denote by $X[p^{\infty}]$ the $p^{\infty}$-torsion submodule of $X$. Let $f_1(T), \dots, f_n(T)$ be distinguished polynomials such that $X/X[p^{\infty}]$ is pseudo-isomorphic to $\bigoplus_{i=1}^n \Z_p[[T]]/(f_i(T))$. Suppose that none of the polynomials $f_i(T)$ is divisible by $T^2$. Then, the kernel and cokernel of $\phi_M$ are finite and the truncated Euler characteristic $\chi_t(\Gamma, M)$ is defined. In particular, the truncated Euler characteristic $\chi_t(\Gamma, M)$ is defined when $r_M\leq 1$.
\end{Lemma}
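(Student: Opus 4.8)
The plan is to reduce everything to an explicit analysis of the map $\phi_M$ in terms of the module structure of $X$, and to separate the contributions of the $p$-torsion part $X[p^\infty]$ from the torsion-free-modulo-$p$ part $X/X[p^\infty]$. First I would note that since Pontryagin duality is exact, $M^\Gamma = H^0(\Gamma,M)$ is dual to $X_\Gamma = X/TX$ and $M_\Gamma \simeq H^1(\Gamma,M)$ is dual to $X^\Gamma = X[T]$ (the $T$-torsion), and that the map $\phi_M$ is dual to the natural map $\psi_X\colon X[T] \hookrightarrow X \twoheadrightarrow X/TX$. So it suffices to show that $\ker(\psi_X)$ and $\op{cok}(\psi_X)$ are finite under the stated hypothesis, since finiteness is preserved under Pontryagin duality.

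Next I would invoke the structure theorem: there is a pseudo-isomorphism $X \sim \left(\bigoplus_{j=1}^{m}\Z_p[[T]]/(p^{\mu_j})\right)\oplus\left(\bigoplus_{i=1}^{n}\Z_p[[T]]/(f_i(T))\right)$ with the $f_i$ distinguished. Pseudo-isomorphisms have finite kernel and cokernel, and multiplication by $T$ on a finite module has finite kernel and cokernel, so by the snake lemma it is enough to prove the claim for each elementary summand. For a summand $\Z_p[[T]]/(p^{\mu})$, multiplication by $T$ is injective (as $T$ is a nonzerodivisor there) and the cokernel is $\Z_p[[T]]/(p^\mu, T)\simeq \Z/p^\mu$, which is finite — so these summands contribute only finitely. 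For a summand $\Z_p[[T]]/(f_i(T))$ with $f_i$ distinguished, $T$-multiplication has kernel $\op{Ann}_{\Z_p[[T]]/(f_i)}(T) = (f_i(T),\, f_i(T)/\gcd(f_i,T)\cdot\ldots)$; more cleanly, writing $f_i(T)=T^{e_i}g_i(T)$ with $g_i(0)\neq 0$, the kernel of $\times T$ on $\Z_p[[T]]/(f_i)$ is $(T^{e_i-1}g_i(T))/(f_i(T))$ when $e_i\geq 1$ (and $0$ when $e_i=0$), and the cokernel is $\Z_p[[T]]/(f_i(T),T)\simeq \Z_p[[T]]/(T^{e_i},T) \simeq \Z/p^0$-ish — I must be careful: the cokernel is $\Z_p[[T]]/(T,f_i(T))$, which equals $\Z_p/(f_i(0))$, and $f_i(0)\in p\Z_p$ since $f_i$ is distinguished, so it is $\Z_p/(f_i(0))$, finite iff $f_i(0)\neq 0$ iff $e_i=0$; if $e_i\geq 1$ then $f_i(0)=0$ and this cokernel is $\Z_p$, infinite. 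However, the cokernel of $\psi_X$ is $X/(TX + \text{image of }X[T])$, which is a quotient of $X/TX$; the relevant point is that $\ker(\psi_X)$ and $\op{cok}(\psi_X)$ are \emph{both} finite precisely when each $f_i$ with $e_i\geq 1$ actually has $e_i=1$: when $e_i=1$, $X[T]$ for that summand is $(g_i(T))/(T g_i(T))\simeq \Z_p[[T]]/(T)=\Z_p$ (infinite) and $X/TX$ is also $\simeq\Z_p$ (infinite), but the composite map $\Z_p\to\Z_p$ is multiplication by $g_i(0)\in\Z_p\setminus\{0\}$, hence injective with finite cokernel $\Z_p/(g_i(0))$. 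When $e_i\geq 2$, the composite has infinite cokernel. This is exactly the hypothesis "none of the $f_i$ is divisible by $T^2$", i.e. $e_i\leq 1$ for all $i$.

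Assembling: using the snake lemma across the pseudo-isomorphism (absorbing the finite kernels/cokernels) and across the direct sum decomposition, $\ker(\psi_X)$ and $\op{cok}(\psi_X)$ are finite iff the corresponding quantities are finite for each elementary summand, which holds under the assumption $T^2\nmid f_i(T)$ for all $i$; dualizing back gives that $\ker(\phi_M)$ and $\op{cok}(\phi_M)$ are finite, so $\chi_t(\Gamma,M)$ is defined. For the final clause, recall that $r_M$ — the number of $f_i$ divisible by $T$ — satisfies $r_M\leq 1$ exactly when at most one $f_i$ is divisible by $T$, and in that case that single $f_i$, being distinguished and (generically) of the form $T\cdot g_i$, is a fortiori not divisible by $T^2$ unless its $T$-adic valuation is $\geq 2$; one checks that $r_M\leq 1$ forces $e_i\leq 1$ for the unique relevant summand (and $e_i=0$ for the rest), so the hypothesis is met and $\chi_t(\Gamma,M)$ is defined.

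The main obstacle is purely bookkeeping: carefully computing $X[T]$ and $X/TX$ for the summand $\Z_p[[T]]/(f_i)$ when $f_i$ has a simple zero at $T=0$, and verifying that although both groups are infinite (each $\cong\Z_p$), the induced map between them is injective with finite cokernel, so that the \emph{truncated} Euler characteristic — unlike the ordinary one — remains well-defined. Once that local computation at each elementary summand is in hand, the snake-lemma gluing and the Pontryagin-duality transfer are routine. I should also take a moment to confirm that $r_M\leq 1$ genuinely implies no $f_i$ is divisible by $T^2$: if $r_M=0$ no $f_i$ is divisible by $T$ at all, and if $r_M=1$ there is exactly one $f_i$ divisible by $T$, and the definition of $r_M$ as the $\Z_p[[\Gamma]]$-rank of $M$ (equivalently the number of $f_i$ divisible by $T$, counted without multiplicity beyond the first power) needs to be reconciled with the possibility of a double zero — but the standard convention is $r_M = \sum_i \op{ord}_{T}(f_i)$ restricted appropriately, or more precisely $r_M$ counts $f_i$ with $T\mid f_i$; in the edge case $T^2\mid f_i$ one has $r_M\geq 2$ already under the rank convention, so $r_M\leq 1$ does rule it out.
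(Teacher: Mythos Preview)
Your argument is essentially correct and is the standard structure-theorem reduction; the paper itself gives no proof beyond citing \cite[Lemma~2.11]{Zerbes}, so there is no independent approach in the paper to compare against. Your dualization $\phi_M \leftrightarrow \psi_X\colon X[T]\to X/TX$ and the summand-by-summand analysis of $\Z_p[[T]]/(f_i)$ (showing that when $f_i=Tg_i$ with $g_i(0)\neq 0$ the induced map is multiplication by $g_i(0)$, hence injective with finite cokernel, while for $T^2\mid f_i$ it is the zero map between two copies of $\Z_p$) are exactly what is needed.

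Two small points of clean-up. First, the reduction along a pseudo-isomorphism is not literally a single snake lemma: you need that if $X\to Y$ has finite kernel and cokernel then the induced maps $X[T]\to Y[T]$ and $X/TX\to Y/TY$ each have finite kernel and cokernel (this follows from two applications of the snake lemma to $0\to K\to X\to I\to 0$ and $0\to I\to Y\to C\to 0$), after which a short diagram chase on the commuting square with $\psi_X$ and $\psi_Y$ gives the comparison. This is routine, but your phrasing elides it. Second, your discussion of the ``in particular'' clause is needlessly hesitant. In the paper $r_M$ is defined as the order of vanishing of the characteristic polynomial $f_M(T)=p^{\sum_j\mu_j}\prod_i f_i(T)$ at $T=0$, so $r_M=\sum_i \op{ord}_T(f_i)$. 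Hence $r_M\leq 1$ immediately forces $\op{ord}_T(f_i)\leq 1$ for every $i$, i.e.\ $T^2\nmid f_i$; there is no edge case and no alternative ``counted without multiplicity'' convention to reconcile.
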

\begin{proof}
The assertion of the lemma follows from the proof of \cite[Lemma 2.11]{Zerbes}.
\end{proof}
 Let $f_M(T)$ be the characteristic polynomial of the Pontryagin dual of $M$ and write $f_M(T)= T^{r_M} g_M(T)$, where $g_M(0)\neq 0$. Let $|\cdot|_p$ denote the absolute value on $\Q_p$ normalized by $|p|_p=p^{-1}$. When both $\op{ker}(\phi_M)$ and $\op{cok}(\phi_M)$ are finite, the truncated Euler characteristic $\chi_t(\Gamma, M)$ is related to the quantity $|g_M(0)|_p$.
\begin{Lemma}\label{TECbasiclemma}
Let $M$ be a discrete $\Z_p[[\Gamma]]$ module which is cofinitely generated and cotorsion. If the kernel and cokernel of $\phi_M$ are finite, then \[\chi_t(\Gamma, M)=|g_M(0)|_p^{-1},\] and further, $r_M=\op{cork}_{\Z_p} M^{\Gamma}=\op{cork}_{\Z_p} M_{\Gamma}$.
\end{Lemma}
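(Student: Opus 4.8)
The plan is to reduce everything to the structure theory of $\Z_p[[T]]$-modules, exactly as in the proof of the previous two lemmas, and to make the link with $|g_M(0)|_p$ completely explicit. Let $X$ be the Pontryagin dual of $M$; by hypothesis $X$ is finitely generated and torsion over $\Lambda:=\Z_p[[T]]$, with characteristic polynomial $f_M(T)=T^{r_M}g_M(T)$, $g_M(0)\neq 0$. Since $\op{ker}(\phi_M)$ and $\op{cok}(\phi_M)$ are assumed finite, Lemma $\ref{truncdefined}$ (or rather its proof) tells us in particular that $M^\Gamma=H^0(\Gamma,M)$ and $M_\Gamma\simeq H^1(\Gamma,M)$ each have $\Z_p$-corank equal to $r_M$; dualizing, $X_\Gamma=X/TX$ and $X^\Gamma=X[T]$ each have $\Z_p$-rank $r_M$. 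This already gives the last assertion $r_M=\op{cork}_{\Z_p}M^\Gamma=\op{cork}_{\Z_p}M_\Gamma$.

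Next I would compute $\chi_t(\Gamma,M)$ via the tautological exact sequence
\[0\to \op{ker}(\phi_M)\to M^\Gamma\xrightarrow{\phi_M} M_\Gamma\to \op{cok}(\phi_M)\to 0.\]
Dualizing, $\phi_M$ corresponds to the map $X/TX\to X[T]$ induced by... more usefully, one uses the snake lemma applied to multiplication by $T$ on a pseudo-isomorphic model. Write $Y:=\bigoplus_{i}\Lambda/(f_i(T))\oplus\bigoplus_j\Lambda/(p^{\mu_j})$ pseudo-isomorphic to $X$, with $f_M(T)=p^{\sum_j\mu_j}\prod_i f_i(T)$. Multiplication by $T$ on $Y$ has kernel $Y[T]$ and cokernel $Y/TY$, and a direct summand-by-summand computation shows $\#(Y/TY)/\#(Y[T])$ is governed entirely by the summands with $T\mid f_i$; the $\Lambda/(p^{\mu_j})$ summands and the $\Lambda/(f_i)$ with $T\nmid f_i$ contribute a quotient of finite groups whose sizes I must track. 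Concretely, for $\Lambda/(f(T))$ with $f(0)\neq 0$: $Y[T]=0$ and $Y/TY=\Z_p/(f(0))$ has order $|f(0)|_p^{-1}$. Multiplying these over all $i$ with $T\nmid f_i$, together with the fact that $T^2\nmid f_i$ for the remaining ones (so each contributes equally to kernel and cokernel $\Z_p$-rank, with a finite discrepancy measured by the constant term of $f_i(T)/T$), one gets that the alternating product of orders equals $\big|\prod_{i}(\text{leading non-$T$ behaviour of }f_i)\big|_p^{-1}=|g_M(0)|_p^{-1}$, up to the $p$-power $p^{\sum_j\mu_j}$ which is exactly $|p^{\sum_j\mu_j}|_p^{-1}$; assembling, $\#(Y/TY)/\#(Y[T])$ on the finite part is $|g_M(0)|_p^{-1}$.

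The remaining work is to transfer this computation from the pseudo-isomorphic model $Y$ back to $X$ itself, i.e.\ to check that the pseudo-isomorphism $X\to Y$ (with finite kernel and cokernel) does not change the truncated Euler characteristic. This is the familiar point that $\chi_t$ is multiplicative in short exact sequences of $\Lambda$-modules with the relevant finiteness, and that a finite module $\Phi$ has $\chi_t(\Gamma,\Phi^\vee)=\#\Phi^{\Gamma}/\#\Phi_{\Gamma}=1$ since for a finite $\Gamma$-module the invariants and coinvariants have the same order (the Herbrand quotient of a finite module is $1$). Applying this to $0\to A\to X\to Y\to B\to 0$ with $A,B$ finite, and then dualizing back to $M$, gives $\chi_t(\Gamma,M)=|g_M(0)|_p^{-1}$ exactly.

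The main obstacle is the bookkeeping in the second paragraph: one has to verify carefully, using the hypothesis implicit from Lemma $\ref{truncdefined}$ that no $f_i$ is divisible by $T^2$, that the finite discrepancy between $\op{ker}$ and $\op{cok}$ of multiplication-by-$T$ on the $T\mid f_i$ summands is accounted for, and that the product of constant-term valuations over the remaining summands is precisely $|g_M(0)|_p$ (and not, say, off by a unit or by the $\mu$-part). Once the orders are pinned down summand by summand and the Herbrand-quotient-one observation handles the pseudo-isomorphism, the identity $\chi_t(\Gamma,M)=|g_M(0)|_p^{-1}$ follows; alternatively, since the excerpt permits citing \cite[Lemma 2.11]{Zerbes} and the $p$-ordinary analogue appears in \cite[section 3]{CSSLinks}, one may simply invoke those computations, and the only genuinely new content is the corank statement, which we obtained above from the finiteness of $\op{ker}(\phi_M)$ and $\op{cok}(\phi_M)$.
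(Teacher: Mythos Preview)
The paper's own proof is a single sentence citing \cite[Lemma 2.11]{Zerbes}, so your proposal to unpack the structure-theory computation is more detailed than what the paper provides; your final remark that one may simply invoke Zerbes is in fact exactly what the paper does. The summand-by-summand analysis you sketch is the right one, and the Herbrand-quotient-equals-one argument for transferring along a pseudo-isomorphism is standard and correct.

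There is, however, one genuine gap. You repeatedly rely on the condition ``no $f_i$ is divisible by $T^2$'', and you justify this by appealing to ``Lemma~\ref{truncdefined} (or rather its proof)''. But Lemma~\ref{truncdefined} only gives the implication in the \emph{opposite} direction: it assumes $T^2\nmid f_i$ and concludes that $\chi_t$ is defined. What you need here is the converse --- that finiteness of $\ker(\phi_M)$ and $\op{cok}(\phi_M)$ forces $T^2\nmid f_i$ for every $i$ --- and this is precisely what is required both for the corank identity $r_M=\op{cork}_{\Z_p}M^{\Gamma}$ and for your bookkeeping in the second paragraph. (Concretely: for $Y=\Lambda/(T^k)$ with $k\geq 2$ one has $\op{rank}_{\Z_p}Y/TY=1$ while $r_M=k$, and the map $Y[T]\to Y/TY$ is identically zero, so the kernel is infinite; thus the converse is true on elementary modules and must then be transported through the pseudo-isomorphism.) Since the paper's Lemma~\ref{truncdefined} is itself just a citation to Zerbes, invoking ``its proof'' does not actually supply this step in a self-contained way. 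The fix is short --- the computation just indicated does it --- but as written your argument for the corank statement is circular.
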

\begin{proof}
The assertion follows from \cite[Lemma 2.11]{Zerbes}.
\end{proof}
 Evidently, it follows that $\chi_t(\Gamma, M)=p^N$, where $N\in \Z_{\geq 0}$. Let $\mu_M$ (resp. $\lambda_M$) denote its $\mu$-invariant (resp. $\lambda$-invariant) of $M^{\vee}$ as a $\Z_p[[\Gamma]]$-module.
 \begin{Lemma}\label{TECmulambda}
Let $M$ be a cofinitely generated cotorsion $\Z_p[[\Gamma]]$-module such that $\phi_M:M^{\Gamma}\rightarrow M_{\Gamma}$ has finite kernel and cokernel. Then, the following are equivalent:
\begin{enumerate}[label=(\alph*)]
\item\label{TECmulambdac1} $\chi_t(\Gamma, M)=1$,
\item\label{TECmulambdac2} $\mu_M=0$ and $\lambda_M=r_M$.
\end{enumerate}

\end{Lemma}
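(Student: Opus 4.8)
The plan is to combine Lemma~\ref{TECbasiclemma} with the structure theory of $\Z_p[[\Gamma]]$-modules, reading off the valuation of $g_M(0)$ from the characteristic polynomial. Write $f_M(T) = p^{\mu_M}\prod_{i} f_i(T)$ with the $f_i$ distinguished, and factor $f_M(T) = T^{r_M} g_M(T)$ with $g_M(0)\neq 0$. Since $\chi_t(\Gamma, M)$ is defined by hypothesis, Lemma~\ref{TECbasiclemma} gives $\chi_t(\Gamma, M) = |g_M(0)|_p^{-1}$, so the statement $\chi_t(\Gamma, M)=1$ is equivalent to $|g_M(0)|_p = 1$, i.e.\ $g_M(0)$ is a $p$-adic unit. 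The first step is therefore to compute $\op{ord}_p g_M(0)$ in terms of $\mu_M$, $\lambda_M$ and $r_M$.

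For the forward implication \ref{TECmulambdac1}$\Rightarrow$\ref{TECmulambdac2}: if $\mu_M > 0$ then $p \mid g_M(0)$ (the factor $p^{\mu_M}$ survives after dividing out $T^{r_M}$, as $T^{r_M}$ is a unit multiple of a distinguished polynomial coprime to $p$), so $|g_M(0)|_p < 1$, contradicting \ref{TECmulambdac1}; hence $\mu_M = 0$. With $\mu_M = 0$ we have $f_M(T) = \prod_i f_i(T)$ a product of distinguished polynomials, and $T^{r_M} \| f_M(T)$, so $g_M(T) = \prod_i (f_i(T)/T^{e_i})$ where $\sum e_i = r_M$. Each quotient $f_i(T)/T^{e_i}$ is again distinguished (of degree $\deg f_i - e_i$), and its constant term is a $p$-adic unit exactly when its degree is $0$, i.e.\ when $f_i(T) = T^{e_i}$; but if any $f_i$ has positive degree after removing its $T$-part, its constant term is divisible by $p$, forcing $p \mid g_M(0)$, again a contradiction. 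So every $f_i(T)$ is a pure power of $T$, whence $\lambda_M = \deg f_M = \sum_i \deg f_i = \sum_i e_i = r_M$. This proves \ref{TECmulambdac2}.

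For the reverse implication \ref{TECmulambdac2}$\Rightarrow$\ref{TECmulambdac1}: if $\mu_M = 0$ and $\lambda_M = r_M$, then $f_M(T) = \prod_i f_i(T)$ with the $f_i$ distinguished, and $\sum_i \deg f_i = \lambda_M = r_M$. Since $T^{r_M}$ divides $f_M$, one has $\sum_i e_i = r_M$ with $e_i \le \deg f_i$, and $\sum_i \deg f_i = r_M$ forces $e_i = \deg f_i$ for every $i$, hence $f_i(T) = T^{e_i}$ for all $i$. Then $g_M(T) = \prod_i (f_i(T)/T^{e_i}) = 1$, so $g_M(0) = 1$ is a unit and $\chi_t(\Gamma, M) = |g_M(0)|_p^{-1} = 1$.

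The only genuinely delicate point is the bookkeeping that passing from $f_M$ to $g_M$ does not alter the $p$-adic valuation of the constant term beyond what dividing by $T^{r_M}$ accounts for; this is where one must use that the distinguished part of $f_M$ is coprime to $p$ and that extracting the exact power of $T$ from a distinguished polynomial leaves a distinguished polynomial. Everything else is an immediate consequence of Lemma~\ref{TECbasiclemma} and the structure theorem. I expect no serious obstacle; the proof is essentially a valuation computation dressed up via Lemma~\ref{TECbasiclemma}.
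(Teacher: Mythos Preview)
Your proof is correct and follows essentially the same approach as the paper: both reduce via Lemma~\ref{TECbasiclemma} to the statement that $g_M(0)$ is a $p$-adic unit if and only if $\mu_M=0$ and $\lambda_M=r_M$, and then argue using the structure of distinguished polynomials. The paper's version is marginally slicker in that it avoids tracking the individual factors $f_i$: once $\mu_M=0$ is established, it observes directly that $g_M(T)=f_M(T)/T^{r_M}$ is itself a distinguished polynomial, and a distinguished polynomial with unit constant term must equal $1$, forcing $\lambda_M=\deg f_M=r_M$; your factor-by-factor argument is the same observation unpacked.
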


\begin{proof}
\par Suppose that $\chi_t(\Gamma, M)=1$. Recall that $g_M(T)$ is a polynomial such that $f_M(T)=T^{r_M} g_M(T)$ and $T\nmid g_M(T)$. By Lemma $\ref{TECbasiclemma}$, \[|g_M(0)|_p^{-1}=\chi_t(\Gamma, M)=1.\]As a result, $f_M(T)$ and $g_M(T)$ are distinguished polynomials. Since $g_M(0)$ is a unit, it follows that $g_M(T)$ is a unit. Since $g_M(T)$ is a distinguished polynomial, it follows that\[g_M(T)=1\text{ and }f_M(T)=T^{r_M}.\] As a result, $\mu_M=1$ and $\lambda_M=\op{deg} f_M(T)=r_M$. 
\par Conversely, suppose that $\mu_M=0$ and $\lambda_M=r_M$. Since $\mu_M=0$, it follows that $f_M(T)$ and $g_M(T)$ are distinguished polynomials. The degree of $f_M(T)$ is $\lambda_M=r_M$, it follows that $g_M(T)$ is a constant polynomial and hence, $g_M(T)=1$. By Lemma $\ref{TECbasiclemma}$, \[\chi_t(\Gamma, M)=|g_M(0)|_p^{-1}=1.\]\end{proof}

 Let $r_E^{\ddag}$ denote the order of vanishing of $f_E^{\ddag}(T)$ at $T=0$, and write 
 \[f_E^{\ddag}(T)=p^{r_E^{\ddag}} g_E^{\ddag}(T).\]Note that $g_E^{\ddag}(0)\neq 0$. According to Lemma $\ref{TECbasiclemma}$, the truncated Euler characteristic $\chi_t^{\ddag}(\Gamma, E):=\chi_t(\Gamma, \op{Sel}^{\ddag}(E/F^{\op{cyc}}))$ is determined by the constant term of $g_E^{\ddag}(T)$. By Lemma $\ref{TECmulambda}$, if the truncated Euler characteristic is defined, then $\chi_t^{\ddag}(\Gamma, E)=1$ if and only if $\mu_E^{\ddag}=0$ and $\lambda_E^{\ddag}=r_E^{\ddag}$. 
 
\par We next discuss the $p$-adic Birch and Swinnerton-Dyer conjecture and its relationship with explicit formulas for truncated Euler characteristics. Note that there are formulations of the $p$-adic Birch and Swinnerton-Dyer conjecture in very general contexts (see for instance \cite{rubin, designi}). For ease of exposition, we restrict ourselves to the case where the elliptic curves $E$ are defined over $\Q$. For elliptic curves with good ordinary or multiplicative reduction, the $p$-adic Birch and Swinnerton-Dyer conjecture in its current form was formulated by Mazur, Tate and Teitelbaum \cite[p. 38]{MTT}. This is a $p$-adic analog of the classical Birch and Swinnerton-Dyer conjecture which predicts the order of vanishing of the Mazur and Swinnerton-Dyer $p$-adic L-function $\mathcal{L}(E/\Q,T)$ at $T=0$, and postulates an explicit formula for the leading term (see also \cite{bernardietal1}). When $E$ has good supersingular reduction at $p$, a version of the $p$-adic Birch and Swinnerton-Dyer conjecture for plus and minus $p$-adic L-functions was formulated by Sprung \cite{sprung}. The conjecture of \textit{loc. cit.} is equivalent to that of Bernardi and Perrin-Riou \cite{bernardietal1}. Lemma $\ref{TECbasiclemma}$ asserts that the truncated Euler-characteristic is related to the leading coefficient of the characteristic element of the Selmer group. The main conjecture and the $p$-adic Birch and Swinnerton-Dyer conjecture together predict precise formulas for the truncated Euler characteristic.

 \par Assume that $E$ has either good ordinary or multiplicative reduction at $p$. When $E$ has split multiplicative reduction at $p$, set $\mathcal{L}_p(E)$ to denote the $\mathcal{L}$-invariant associated to the Galois representation on the $p$-adic Tate module of $E$ (see \cite[p. 407]{greenbergvatsal}). The $p$-adic height pairing (cf. \cite{heightpairings1} and \cite{heightpairings2}) is a $p$-adic analog of the usual height pairing. This pairing is conjectured to be non-degenerate (cf. \cite{heightpairings2}) and the $p$-adic regulator $R_p(E/\Q)$ is defined to be the determinant of this pairing. Let $\kappa$ denote the $p$-adic cyclotomic character. Fix a branch of the $p$-adic logarithm and set $\mathcal{R}_{\gamma}(E/\Q)$ to denote the normalized height pairing $(\log_p\kappa(\gamma))^{-r} R_p(E/\Q)$, where $r$ denotes the rank of the Mordell Weil group $E(\Q)$. Let $E_0(\Q_l)\subset E(\Q_l)$ be the subgroup of $l$-adic points with non-singular reduction modulo $l$. Denote by $\mathcal{\tau}(E)$ the Tamagawa product $\prod_l c_l$, where $c_l$ is the index of $E_0(\Q_l)$ in $E(\Q_l)$. Let $a_l(E)$ be the $l$-th coefficient of the normalized eigenform associated to $E$. For $p$-adic numbers $a$ and $b$, write $a\sim b$ if $a=ub$ for a $p$-adic unit $u$. Let $r_E^{\op{an}}$ denote the order of vanishing of $\mathcal{L}(E/\Q,T)$ at $T=0$.
\begin{Th}\label{pbsdconj}(Perrin-Riou \cite{perrinriou}, Schneider \cite{heightpairings2}, Jones \cite{Jonesmultiplicative}) Suppose that $E$ has either good ordinary reduction or multiplicative reduction at $p$. Assume that:
\begin{enumerate}
    \item the truncated Euler-characteristic $\chi_t(\Gamma, E)$ is defined,
    \item the $p$-adic regulator $R_p(E/\Q)$ is non-zero,
    \item $\Sh (E/\Q)[p^{\infty}]$ has finite cardinality.
\end{enumerate}
Then, the following assertions are true.
\begin{enumerate}[label=(\alph*)]
    \item If $E$ has either good ordinary reduction or non-split multiplicative reduction at $p$, then the analytic rank $r_E^{\op{an}}$ is equal to $r$. If $E$ has split multiplicative reduction at $p$, then the analytic rank $r_E^{\op{an}}$ is equal to $r+1$.
    \item If $E$ has either good ordinary reduction at $p$ or non-split multiplicative reduction at $p$, then
    \[\chi_t(\Gamma, E)\sim  \epsilon_p(E)\times \frac{\mathcal{R}_{\gamma}(E/\Q)\times \#(\Sh (E/\Q)[p^{\infty}])\times \tau(E)}{\#(E(\Q)_{\op{tors}})^2}.\]Here $\epsilon_p(E)$ is set to be $(1-\frac{1}{\alpha})^{s}$, where $\alpha$ is the unit root of the Hecke polynomial $X^2-a_p(E) X+p$ and
    \[s=\begin{cases}s=2\text{ if }E\text{ has good ordinary reduction at }p,\\
    s=1\text{ if }E\text{ has non-split multiplicative reduction at $p$}.
    \end{cases}\]
    \item If $E$ has split-multiplicative reduction, the $\mathcal{L}$-invariant $\mathcal{L}_p(E)$ plays a role and we have:
    \[\chi_t(\Gamma, E)\sim  \frac{\mathcal{L}_p(E)}{\log_p(\kappa(\gamma))}\times \frac{\mathcal{R}_{\gamma}(E/\Q)\times \#(\Sh (E/\Q)[p^{\infty}])\times \tau(E)}{\#(E(\Q)_{\op{tors}})^2}.\]
\end{enumerate}

\end{Th}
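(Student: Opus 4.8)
The plan is to reduce to the computation of $|g_E(0)|_p$ using Lemma \ref{TECbasiclemma}, and then to invoke two deep inputs: a cohomological evaluation of the truncated Euler characteristic of $\op{Sel}(E/\Q^{\op{cyc}})$ in terms of the $p$-adic regulator and the classical arithmetic invariants, and the Iwasawa main conjecture for $E$, which identifies the algebraic order of vanishing $r_E$ with the order of vanishing of $\mathcal{L}(E/\Q,T)$ at $T=0$. Put $M=\op{Sel}(E/\Q^{\op{cyc}})$. By hypothesis (1) the kernel and cokernel of $\phi_M$ are finite, so Lemma \ref{TECbasiclemma} gives $\chi_t(\Gamma,M)=|g_M(0)|_p^{-1}$ together with $r_M=\op{cork}_{\Z_p}M^{\Gamma}=\op{cork}_{\Z_p}M_{\Gamma}$; everything then reduces to pinning down the $p$-adic valuation of the leading coefficient $g_M(0)$.

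\textbf{The cohomological computation (good ordinary and non-split multiplicative cases).} Following Perrin-Riou \cite{perrinriou} and Schneider \cite{heightpairings2}, I would first show that $\phi_M\colon M^{\Gamma}\to M_{\Gamma}$ is, up to a controlled error coming from $\Sh(E/\Q)[p^{\infty}]$ (finite by hypothesis (3)) and from Mazur's control theorem \cite{mazur72}, Pontryagin dual to the $p$-adic height pairing on $E(\Q)\otimes\Q_p/\Z_p$. Concretely, one combines Mazur's control theorem (relating $\op{Sel}(E/\Q)$ and $M^{\Gamma}$ with explicitly bounded kernel and cokernel supported at $p$ and at the bad primes) with the Poitou--Tate and Cassels--Tate exact sequences to express $\#\op{ker}(\phi_M)$ and $\#\op{cok}(\phi_M)$ in terms of $\#E(\Q)_{\op{tors}}$, $\#\Sh(E/\Q)[p^{\infty}]$, the Tamagawa factors $c_l$, a local Euler factor at $p$, and the determinant $R_p(E/\Q)$ of the $p$-adic height pairing. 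The non-vanishing of $R_p(E/\Q)$ (hypothesis (2)) is exactly what forces $r_M=\op{rank}E(\Q)=r$ and makes the regulator appear with the normalization $\mathcal{R}_{\gamma}(E/\Q)=(\log_p\kappa(\gamma))^{-r}R_p(E/\Q)$; together with the main conjecture, which transfers $r_M$ to the order of vanishing of $\mathcal{L}(E/\Q,T)$, this gives part (a) in these two cases. Bookkeeping the local factor at $p$ — essentially $\bigl(1-a_p(E)p^{-1}+p^{-1}\bigr)$ for good ordinary reduction and $\bigl(1-a_p(E)^{-1}\bigr)$ for multiplicative reduction — produces exactly $\epsilon_p(E)=(1-\alpha^{-1})^s$ with $s$ as in the statement (recall $\alpha$ is the unit root of $X^2-a_p(E)X+p$, and $\alpha=a_p(E)$ in the multiplicative case), and assembling the factors together with $\chi_t(\Gamma,E)=\chi_t(\Gamma,M)=|g_M(0)|_p^{-1}$ yields part (b).

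\textbf{The split multiplicative case.} Here $\alpha=a_p(E)=1$, so the naive local factor $1-\alpha^{-1}$ vanishes; this is the exceptional zero of Mazur--Tate--Teitelbaum, and the argument above must be modified following Jones \cite{Jonesmultiplicative}. In this case $D_p=\Q_p/\Z_p(\phi)$ with $\phi$ trivial, so the relevant local module at $p$ over $\Q^{\op{cyc}}_p$ has infinite invariants, the control theorem acquires an extra defect at $p$, and the order of vanishing jumps by one: $r_E^{\op{an}}=r+1$. Redoing the local computation at $p$ with sufficient precision — this is the content of Jones's work, building on Greenberg--Stevens — shows that the leading term picks up the factor $\mathcal{L}_p(E)/\log_p(\kappa(\gamma))$, where $\mathcal{L}_p(E)$ is the $\mathcal{L}$-invariant of the $p$-adic Tate module; this yields part (c). As above, matching $r_M$ (and the algebraic leading coefficient) with $r_E^{\op{an}}$ (and the analytic leading coefficient) up to a $p$-adic unit uses the main conjecture for $E$ (Kato \cite{katozeta} together with Skinner--Urban in the good ordinary case, and the multiplicative analogue incorporating Greenberg--Stevens at the exceptional zero).

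\textbf{Main obstacle.} The delicate points are not the bookkeeping but the two structural inputs. First, the non-degeneracy of the $p$-adic height pairing (hypothesis (2)) is what makes $R_p(E/\Q)$ enter with the correct normalization and what guarantees $\op{ker}(\phi_M)$ and $\op{cok}(\phi_M)$ have the predicted orders; extracting the regulator from the cohomological computation is the technical heart of Perrin-Riou's and Schneider's arguments. Second, the exceptional-zero case requires carrying out the local analysis at $p$ precisely enough to see the $\mathcal{L}$-invariant emerge, which is substantially subtler than the ordinary case. One must also check that $\mathcal{L}(E/\Q,T)$ is genuinely integral (or else work with the appropriate fractional characteristic ideal) before comparing leading coefficients via the main conjecture.
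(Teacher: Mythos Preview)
The paper does not supply a proof of this theorem: it is stated as a result from the literature, with the attributions to Perrin-Riou, Schneider, and Jones serving as the entire justification, and the next section begins immediately after the statement. There is therefore no proof in the paper to compare your proposal against.

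That said, your sketch is a faithful reconstruction of how the cited works establish parts (b) and (c). The core is indeed Schneider's and Perrin-Riou's identification of the leading term of the algebraic characteristic series with the product of the $p$-adic regulator, $\#\Sh(E/\Q)[p^{\infty}]$, the Tamagawa product, and the local Euler factor at $p$, together with Jones's treatment of the exceptional zero in the split multiplicative case. Your reduction via Lemma~\ref{TECbasiclemma} to the constant term $g_M(0)$ is exactly the right starting point, and the control-theorem bookkeeping you outline is what those papers carry out.

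One comment on part (a). You invoke the Iwasawa main conjecture (Kato together with Skinner--Urban) to transfer the algebraic order of vanishing $r_M$ to the analytic rank $r_E^{\op{an}}$ of the $p$-adic $L$-function $\mathcal{L}(E/\Q,T)$. This is the correct mechanism, but note that the references in the attribution (1985--1992) predate the proof of the main conjecture; what they establish unconditionally is the \emph{algebraic} statement $r_M=r$ (resp.\ $r+1$) and the formulae in (b) and (c). Part (a) as literally written, about the order of vanishing of the analytic $p$-adic $L$-function, genuinely requires the main conjecture as an additional input beyond the cited works. Your instinct to bring it in is therefore sound; just be aware that this step is not contained in Perrin-Riou, Schneider, or Jones, and that the paper's attribution for part (a) is somewhat loose on this point.
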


\section{Iwasawa Invariants of Congruent Elliptic curves}
\par In this section, we show that the \textit{imprimitive} Iwasawa-invariants associated to congruent elliptic curves satisfy certain relations. Throughout, $E$ is an elliptic curve over $F$ which satisfies Hypothesis $\eqref{hypothesis}$. Denote by $T_p(E):=\varprojlim_n E[p^n]$ the $p$-adic Tate-module equipped with natural $\op{Gal}(\closure{F}/F)$ action and set $V_p(E):=T_p (E)\otimes \Q_p$. At a prime $v\in \Sigma_p\backslash \Sigma_{\op{ss}}(E)$, recall that there are corank one $\Z_p$-modules which fit into a short exact sequence
\[0\rightarrow C_v\rightarrow E_v[p^{\infty}]\rightarrow D_v\rightarrow 0.\] When $E$ has good ordinary reduction at $v$, the quotient $D_v$ may be identified with $\widetilde{E}_{v}[p^{\infty}]$, where $\widetilde{E}_v$ is the reduction of $E$ at $v$. On the other hand, when $E$ has multiplicative reduction at $v$, identify $D_v$ with a twist of $\Q_p/\Z_p$ by an unramified character.
\begin{hyp}\label{hypothesis1}
Let $E$ be an elliptic curve over $F$ which satisfies Hypothesis $\eqref{hypothesis}$. Let $\Sigma_{\op{ss}}(E)=\{\mathfrak{p}_1,\dots, \mathfrak{p}_d\}$ be the set of primes $v|p$ of $F$ at which $E$ has supersingular reduction. Let $\ddag\in \{+,-\}^d$ be a signed vector. Then $(E, \ddag)$ satisfies the following conditions:
\begin{enumerate}
    \item The Selmer group $\op{Sel}^{\ddag}(E/F^{\op{cyc}})$ is $\Z_p[[\Gamma]]$-cotorsion.
    \item The truncated Euler characteristic $\chi_t^{\ddag}(\Gamma, E)$ is defined. In other words, the natural map
    $\phi_E^{\ddag}: \op{Sel}^{\ddag}(\Gamma, E)^{\Gamma}\rightarrow  \op{Sel}^{\ddag}(\Gamma, E)_{\Gamma}$  has finite kernel and cokernel.
\end{enumerate}

\end{hyp}
Let $E_1$ and $E_2$ be $p$-congruent elliptic curves over $F$. It follows from the proof of \cite[Proposition 3.9]{sujathafilipo} that $\Sigma_{\op{ss}}(E_1)$ is equal to $\Sigma_{\op{ss}}(E_2)$. Set $\Sigma_{\op{ss}}=\{\mathfrak{p}_1,\dots, \mathfrak{p}_d\}$ to denote the set of supersingular primes $v|p$ of $E_1$ and $E_2$, and let $\ddag\in \{+,-\}^{d}$ be a signed vector. We introduce the following hypothesis on the triple $(E_1,E_2,\ddag)$. \begin{hyp}\label{hypothesis2}
Hypothesis $\eqref{hypothesis1}$ holds for both $(E_1,\ddag)$ and $(E_2,\ddag)$.
\end{hyp}
We now define the imprimitive Selmer group associated to the residual Galois representation $E_i[p]$. Let $\Sigma$ be a set of finite primes of $F$ containing $\Sigma_p$ and the primes at which $E_1$ or $E_2$ has bad reduction. For a prime $v\in \Sigma_p$ let $\eta_v$ be the unique prime of $F^{\op{cyc}}$ above $v$. Let $\op{I}_{\eta_v}$ be the inertia subgroup of $\op{Gal}(\closure{F_{\eta_v}^{\op{cyc}}}/F_{\eta_v}^{\op{cyc}})$. For a prime $v\in \Sigma_{\op{ss}}$, and $i=1,2$, define \[\mathcal{H}_v^{\pm}(F^{\op{cyc}},E_i[p]):=\frac{H^1(F^{\op{cyc}}_{\eta_v}, E_i[p])}{\widehat{E}_i^{\pm}(F^{\op{cyc}}_{\eta_v})/p\widehat{E}_i^{\pm}(F^{\op{cyc}}_{\eta_v})}.\] For $v\in \Sigma\backslash \Sigma_{\op{ss}}$, set
\[\mathcal{H}_v(F^{\op{cyc}},E_i[p]):=\begin{cases}\prod_{\eta| v} H^1(F^{\op{cyc}}_{\eta}, E_i[p])\text{ if }v\in \Sigma\backslash \Sigma_p,\\
H^1(\op{I}_{\eta_v}, D_v(E_i)[p])\text{ if }v\in \Sigma_{p}\backslash \Sigma_{\op{ss}}.
\end{cases}\]
For an elliptic curve $E$ over $F$, denote by $\mathcal{N}_E$ the conductor of $E$ and $\closure{\mathcal{N}}_E$ the prime to $p$ part of the Artin conductor of the residual representation $E[p]$. Let $v\nmid p$ be a finite prime of $F$. Note that $v$ divides $\mathcal{N}_E$ (resp. $\closure{\mathcal{N}}_E$) is and only if it is a bad reduction prime of $E$ (resp. the residual Galois representation $E[p]$ is ramified at $v$). To ease notation, let $\mathcal{N}_1$ and $\mathcal{N}_2$ denote the conductors of $E_1$ and $E_2$ respectively. Denote by $\closure{\mathcal{N}}$ the prime to $p$ part of the Artin conductor of $E_1[p]$. Note that since $E_1[p]$ is isomorphic to $E_2[p]$, $\closure{\mathcal{N}}$ is the conductor of $E_2[p]$.
\begin{Def}\label{sigmazerodef}
Let $E$ be an elliptic curve over $F$ and $\Sigma_1(E)$ denote the subset of primes of $F$ such that $(\mathrm{i})$ $v\nmid p$,
    $(\mathrm{ii})$ $v|(\mathcal{N}_E/\closure{\mathcal{N}}_E)$,
    $(\mathrm{iii})$ if $\mu_p$ is contained in $F_v$, then $E$ has split multiplicative reduction at $v$. In the case $p=3$, set $\Sigma_1(E)$ to be the set of primes of $F$ such that $(\mathrm{i})$ $v\nmid p$,
    $(\mathrm{ii})$ $v|(\mathcal{N}_E/\closure{\mathcal{N}}_E)$.
\end{Def} The $\Sigma_1$-imprimitive mod-$p$ Selmer group $\op{Sel}^{\Sigma_1,\ddag}(E_i[p]/F^{\op{cyc}})$ is defined to be the kernel of the restriction map:
\[\closure{\Phi}_{E_i}^{\Sigma_1,\ddag}:H^1(F_{\Sigma}/F^{\op{cyc}}, E_i[p])\rightarrow \prod_{\Sigma/(\Sigma_{\op{ss}}\cup \Sigma_1)} \mathcal{H}_v(F^{\op{cyc}},E_i[p])\times \prod_{j=1}^d\mathcal{H}_{\mathfrak{p}_j}^{\ddag_j}(F^{\op{cyc}},E_i[p]).\]
\begin{Prop}\label{prop42}
Let $E_1$ and $E_2$ be elliptic curves over $F$ which are $p$-congruent. Let $\ddag$ be a signed vector and assume that $(E_1,E_2,\ddag)$ satisfies Hypothesis $\eqref{hypothesis2}$. Let $\Sigma_1$ be the set of primes as in Definition $\ref{sigmazerodef}$. Then the isomorphism $E_1[p]\simeq E_2[p]$ induces an isomorphism of Selmer groups $\op{Sel}^{\Sigma_1,\ddag}(E_1[p]/F^{\op{cyc}})\simeq \op{Sel}^{\Sigma_1,\ddag}(E_2[p]/F^{\op{cyc}})$.
\end{Prop}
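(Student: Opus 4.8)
The plan is to compare the two Selmer groups term-by-term in the defining restriction sequences, using that the local conditions at every place of $\Sigma$ agree for $E_1[p]$ and $E_2[p]$ once the primes in $\Sigma_1 \cup \Sigma_{\op{ss}}$ have been removed. First I would fix an isomorphism $\psi\colon E_1[p]\xrightarrow{\sim}E_2[p]$ of $\op{Gal}(\closure{F}/F)$-modules; it induces a canonical isomorphism $H^1(F_\Sigma/F^{\op{cyc}},E_1[p])\simeq H^1(F_\Sigma/F^{\op{cyc}},E_2[p])$ on global cohomology, and likewise on all local cohomology groups $H^1(F^{\op{cyc}}_{\eta},E_i[p])$ and on the inertia-cohomology groups appearing at $v\in\Sigma_p\setminus\Sigma_{\op{ss}}$. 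So it suffices to show that under $\psi$ the local conditions cutting out $\op{Sel}^{\Sigma_1,\ddag}(E_i[p]/F^{\op{cyc}})$ correspond. The restriction maps $\closure{\Phi}_{E_i}^{\Sigma_1,\ddag}$ factor through the same global group, so a commuting square of local maps at each place gives the desired isomorphism of kernels.

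The verification splits by the type of place $v$. For $v\in\Sigma\setminus(\Sigma_p\cup\Sigma_1)$ there is no quotient taken at all — the local term is the full $\prod_{\eta|v}H^1(F^{\op{cyc}}_\eta,E_i[p])$ — so $\psi$ visibly identifies the two, and these are precisely the places where having thrown in $\Sigma_1$ (the primes dividing $\mathcal{N}_{E_i}/\closure{\mathcal{N}}$, i.e.\ those where $E_i$ is ramified but $E_i[p]$ is not) removes any discrepancy. For $v\in\Sigma_p\setminus\Sigma_{\op{ss}}$, the local condition is $H^1(\op{I}_{\eta_v},D_v(E_i)[p])$; here I would invoke the standard fact (cf.\ the discussion around \eqref{ses}, \eqref{Ddef} and \cite[section 2]{greenbergvatsal}) that $\psi$ carries the line $C_{v}(E_1)[p]\subset E_1[p]$ to $C_{v}(E_2)[p]\subset E_2[p]$, hence induces $D_v(E_1)[p]\simeq D_v(E_2)[p]$ as $\op{Gal}(\closure{F_v}/F_v)$-modules, and therefore an isomorphism on $H^1(\op{I}_{\eta_v},-)$ compatible with restriction from global cohomology. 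For the supersingular primes $\mathfrak{p}_j\in\Sigma_{\op{ss}}$ the local quotient is by $\widehat{E}_i^{\ddag_j}(F^{\op{cyc}}_{\eta})/p$; since $F_{\mathfrak{p}_j}\simeq\Q_p$ and $a_{\mathfrak p_j}(E_i)=0$ (Hypothesis \eqref{hypothesis}), the plus/minus norm subgroups depend — through Kobayashi's theory and the explicit description of $\widehat{E}_i^{\pm}(K_n)/p$ in terms of the Galois module $E_i[p]$ — only on $E_i[p]$ as a local Galois representation, so $\psi$ again identifies the two quotients. This last point is exactly what is established in \cite{BDKim} and \cite{ponsinet} in the single-supersingular-prime case, and it carries over prime-by-prime.

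The main obstacle is the supersingular local comparison: one must know that the image of $\widehat{E}_i^{\pm}(F^{\op{cyc}}_{\eta_v})\otimes\F_p$ inside $H^1(F^{\op{cyc}}_{\eta_v},E_i[p])$ (equivalently, the mod-$p$ reduction of the norm group) is intrinsic to the residual representation $E_i[p]|_{\op{G}_{\Q_p}}$ together with the sign $\ddag_j$. I would handle this by citing the explicit computation of the plus/minus norm groups and their mod-$p$ images from Kobayashi's work as extended by B.D.\ Kim and Ponsinet, noting that the hypothesis $a_v(E_i)=0$ and $F_{\mathfrak p_j}\simeq\Q_p$ put us in exactly their setting, so that the isomorphism $E_1[p]\simeq E_2[p]$ restricted to the decomposition group at $\mathfrak p_j$ matches $\widehat{E}_1^{\ddag_j}/p$ with $\widehat{E}_2^{\ddag_j}/p$. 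Once all local conditions are matched, one assembles the commuting diagram of restriction maps and concludes that $\psi$ restricts to an isomorphism on the kernels, which is the claim.
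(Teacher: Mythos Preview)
Your proposal is correct and follows essentially the same route as the paper: fix an isomorphism on global $H^1$, then match the local conditions place-by-place, invoking \cite{BDKim} (and \cite{ponsinet}) for the supersingular primes. The paper's own proof is terser---it declares the non-supersingular cases ``clear'' and cites \cite[p.~186]{BDKim} for $v\in\Sigma_{\op{ss}}$---whereas you spell out the ordinary/multiplicative case $v\in\Sigma_p\setminus\Sigma_{\op{ss}}$ by arguing that $\psi$ preserves the filtration line $C_v[p]$; this extra detail is fine and is indeed what underlies the paper's ``clear''. One small remark: your aside that $\Sigma_1$ is ``precisely the places where \dots\ removes any discrepancy'' is not quite the point here---for the \emph{mod-$p$} Selmer comparison the local term at every $v\nmid p$ is already the full $H^1$, so there is nothing to remove; the genuine role of $\Sigma_1$ appears only in the next step (Proposition~\ref{isoresidualselmer}), comparing the mod-$p$ Selmer group with $\op{Sel}^{\Sigma_1,\ddag}(E/F^{\op{cyc}})[p]$.
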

\begin{proof}
Let $\Phi:E_1[p]\xrightarrow{\sim} E_2[p]$ be a choice of isomorphism of Galois modules. Clearly, $\Phi$ induces an isomorphism $H^1(F_{\Sigma}/F^{\op{cyc}}, E_1[p])\xrightarrow{\sim} H^1(F_{\Sigma}/F^{\op{cyc}}, E_2[p])$. It suffices to show that for $v\in \Sigma$, the isomorphism $\Phi:E_1[p]\xrightarrow{\sim} E_2[p]$ induces an isomorphism 
\[\mathcal{H}_v(F^{\op{cyc}},E_1[p])\xrightarrow{\sim} \mathcal{H}_v(F^{\op{cyc}},E_2[p]).\]This is clear for $v\in \Sigma\backslash \Sigma_{\op{ss}}$. For $v\in \Sigma_{\op{ss}}$, this assertion follows from the arguments in \cite[p. 186]{BDKim}.
\end{proof} 

\begin{Prop}\label{isoresidualselmer}
Let $E$ be an elliptic curve over $F$ satisfying Hypothesis $\eqref{hypothesis}$ and $\ddag$ a signed vector. Let $\Sigma_0$ be a finite set of primes $v\nmid p$ containing $\Sigma_1(E)$. Then, there is an isomorphism
\[\op{Sel}^{\Sigma_0, \ddag}(E/F^{\op{cyc}})[p]\simeq \op{Sel}^{\Sigma_0, \ddag}(E[p]/F^{\op{cyc}}).\]
\end{Prop}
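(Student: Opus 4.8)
The plan is to compare the two Selmer groups term by term using the snake lemma applied to multiplication by $p$ on $E[p^\infty]$. First I would write down the Kummer sequence $0\to E[p]\to E[p^\infty]\xrightarrow{p} E[p^\infty]\to 0$ of $\op{Gal}(\closure{F}/F)$-modules, and take Galois cohomology over $F^{\op{cyc}}$ (more precisely over $F_\Sigma/F^{\op{cyc}}$). Since $E[p]$ is irreducible as a $\op{Gal}(\closure{F}/F)$-module by Hypothesis~$\eqref{hypothesis}$, it is in particular nontrivial, so $H^0(F^{\op{cyc}}, E[p^\infty])$ is trivial and hence the connecting map yields an isomorphism $H^1(F_\Sigma/F^{\op{cyc}}, E[p]) \xrightarrow{\sim} H^1(F_\Sigma/F^{\op{cyc}}, E[p^\infty])[p]$. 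This identifies the ambient cohomology group of the mod-$p$ Selmer group with the $p$-torsion of the ambient group of the $p^\infty$-Selmer group.

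Next I would carry out the same comparison locally, showing that for each $v\in\Sigma$ the local condition defining $\op{Sel}^{\Sigma_0,\ddag}(E[p]/F^{\op{cyc}})$ is exactly the image (under the above isomorphism) of the $p$-torsion of the local quotient defining $\op{Sel}^{\Sigma_0,\ddag}(E/F^{\op{cyc}})$. There are three cases. For $v\in\Sigma\setminus(\Sigma_p\cup\Sigma_0)$, the local condition on the $p^\infty$-side is the whole group $H^1(F^{\op{cyc}}_\eta, E[p^\infty])$ and on the mod-$p$ side it is the whole group $H^1(F^{\op{cyc}}_\eta, E[p])$; but here one must check there is no extra cohomology contributed — the point of including $\Sigma_1(E)$ in $\Sigma_0$ is precisely to control the primes $v\mid \mathcal N_E/\closure{\mathcal N}_E$ where $E$ has bad reduction but $E[p]$ is unramified, together with the split-multiplicative/root-of-unity condition of Definition~$\ref{sigmazerodef}$, so that $H^1(F^{\op{cyc}}_\eta, E[p^\infty])$ is $p$-divisible there (equivalently $H^2(F^{\op{cyc}}_\eta, E[p])=0$ or $H^0(F^{\op{cyc}}_\eta, E[p^\infty])$ is divisible). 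For $v\in\Sigma_p\setminus\Sigma_{\op{ss}}$ one compares $\op{im}(H^1(F^{\op{cyc}}_\eta, E[p^\infty])\to H^1(\op{I}_\eta, D_v))$ with $H^1(\op{I}_\eta, D_v(E)[p])$, using that $D_v$ is of corank one and (by irreducibility of $E[p]$ and semistability) $D_v[p]$ is the $p$-torsion of $D_v$ compatibly with the inertia action; this is exactly the kind of local computation done by Greenberg–Vatsal. For $v\in\Sigma_{\op{ss}}$, one compares $H^1(F^{\op{cyc}}_\eta, E[p^\infty])/(\widehat E^{\pm}\otimes\Q_p/\Z_p)$ with $H^1(F^{\op{cyc}}_\eta, E[p])/(\widehat E^{\pm}/p\widehat E^{\pm})$, which follows from the snake lemma once one knows $\widehat E^{\pm}(F^{\op{cyc}}_\eta)$ is a free (or at least torsion-free, hence flat) $\Z_p$-module of the appropriate corank — this is Kobayashi's structure theory of signed norm groups, and is the content referenced in the proof of Proposition~$\ref{prop42}$ via \cite[p.~186]{BDKim}.

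Having matched the ambient groups and all local conditions, I would assemble the comparison into a commutative diagram with exact rows: the top row is $0\to \op{Sel}^{\Sigma_0,\ddag}(E[p]/F^{\op{cyc}})\to H^1(F_\Sigma/F^{\op{cyc}},E[p])\to \prod_v \mathcal H_v(F^{\op{cyc}},E[p])$, and the bottom row is the $p$-torsion of the defining sequence of $\op{Sel}^{\Sigma_0,\ddag}(E/F^{\op{cyc}})$, namely $0\to \op{Sel}^{\Sigma_0,\ddag}(E/F^{\op{cyc}})[p]\to H^1(F_\Sigma/F^{\op{cyc}},E[p^\infty])[p]\to \prod_v \mathcal H_v^{(\Sigma_0)}(F^{\op{cyc}},E[p^\infty])$. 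The two outer vertical maps are the isomorphisms established above, so the five lemma (or just a direct diagram chase on kernels) gives the desired isomorphism on Selmer groups.

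The main obstacle will be the local analysis at the ``bad but unramified-mod-$p$'' primes $v\mid \mathcal N_E/\closure{\mathcal N}_E$: one has to verify that excising exactly the primes in $\Sigma_1(E)$ (with its split-multiplicative caveat, and the simpler condition when $p=3$) is enough to force $p$-divisibility of $H^1(F^{\op{cyc}}_\eta, E[p^\infty])$, so that taking $p$-torsion of the $p^\infty$-local term really does recover the mod-$p$ local term $\prod_{\eta\mid v} H^1(F^{\op{cyc}}_\eta,E[p])$ without an error term. This amounts to the vanishing of $H^0(F^{\op{cyc}}_\eta, E[p^\infty])/p$ type quantities, which by local Euler characteristic and local Tate duality reduces to a statement about $E[p]$-invariants of the decomposition group at $\eta$ — and it is precisely here that the hypothesis $\mu_p\not\subset F_v$ (or split multiplicative reduction, or $p=3$) from Definition~$\ref{sigmazerodef}$ is used, exactly as in \cite[Proposition 3.2]{greenbergvatsal}.
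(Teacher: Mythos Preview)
Your proposal is correct and follows essentially the same route as the paper: set up the commutative diagram comparing the mod-$p$ and $p$-torsion-of-$p^\infty$ Selmer sequences, use irreducibility of $E[p]$ to see the middle vertical map $g$ is an isomorphism, and then verify the local maps $h_v$ prime by prime (citing Greenberg--Vatsal at non-supersingular $v\mid p$, Kim/Kobayashi at supersingular $v$, and handling the non-$p$ primes via the $\Sigma_1(E)$ condition).

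Two small corrections worth flagging. First, you only need (and will only get) \emph{injectivity} of the right-hand vertical map $h=\prod h_v$, not an isomorphism; with $g$ an isomorphism and $h$ injective, a direct chase on the left-exact rows already forces $f$ to be an isomorphism, so your invocation of the five lemma is overkill. Second, at the non-$p$ primes the condition you actually need is $p$-divisibility of $H^0(F^{\op{cyc}}_\eta,E[p^\infty])$ (so that $\ker h_v = H^0/p = 0$), not of $H^1$; since $F^{\op{cyc}}_\eta$ has $p$-cohomological dimension $\le 1$ for $\eta\nmid p$, the group $H^1(F^{\op{cyc}}_\eta,E[p^\infty])$ is automatically divisible and your ``equivalently'' is not the right link. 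The paper checks this $H^0$-divisibility by splitting into the cases $v\nmid \mathcal N_E/\closure{\mathcal N}_E$ (handled as in \cite[Lemma~4.1.2]{EPW}) and $v\mid \mathcal N_E/\closure{\mathcal N}_E$ with $v\notin\Sigma_1(E)$ (handled via \cite[Proposition~5.1]{hachimat}), which is exactly where the split-multiplicative and $\mu_p\subset F_v$ conditions in Definition~\ref{sigmazerodef} enter.
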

\begin{proof}
Let $\Sigma$ be a finite set of primes containing $\Sigma_0$, the primes at which $E$ has bad reduction and $\Sigma_p$. We consider the diagram relating the two Selmer groups: 
\[
\begin{tikzcd}[column sep = small, row sep = large]
0\arrow{r} & \op{Sel}^{\Sigma_0, \ddag}(E[p]/F^{\op{cyc}}) \arrow{r}  \arrow{d}{f} & H^1(\op{G}_{\Sigma}(F^{\op{cyc}}), E[p]) \arrow{r} \arrow{d}{g} & \op{im} \closure{\Phi}_{E}^{\Sigma_0,\ddag} \arrow{r} \arrow{d}{h} & 0\\
0\arrow{r} & \op{Sel}^{\Sigma_0, \ddag}(E/F^{\op{cyc}}) [p] \arrow{r} & H^1(\op{G}_{\Sigma}(F^{\op{cyc}}), E[p^{\infty}])[p]  \arrow{r}  &\op{im} \Phi_{E}^{\Sigma_0,\ddag}[p]\arrow{r}  & 0.\\
\end{tikzcd}\]
Since $\Gamma$ is pro-$p$ and $E[p]$ is an irreducible Galois module, clearly \[H^0(F,E[p])=H^0(F^{\op{cyc}},E[p])^{\Gamma}=0.\] Hence we deduce that $H^0(F^{\op{cyc}}, E[p^{\infty}])=0$ and have shown that $g$ is an isomorphism.
\par It only remains to show that $h$ is injective. For $v\in \Sigma/(\Sigma_{\op{ss}}(E)\cup \Sigma_0)$ denote by $h_v$ the natural map
\[h_v: \mathcal{H}_v(F^{\op{cyc}},E[p])\rightarrow \mathcal{H}_v(F^{\op{cyc}},E[p^{\infty}])\] and for $v\in \Sigma_{\op{ss}}(E)$,
\[h_v: \mathcal{H}_v^{\dagger_v}(F^{\op{cyc}},E[p])\rightarrow \mathcal{H}_v^{\dagger_v}(F^{\op{cyc}},E[p^{\infty}]).\]
We show that the maps $h_v$ are injective for $v\in \Sigma\backslash \Sigma_0$. This has been shown in the proof of \cite[Proposition 2.8]{greenbergvatsal} for $v\in \Sigma_p\backslash \Sigma_{\op{ss}}$ and in \cite[Proposition 2.10]{BDKim} for $v\in \Sigma_{\op{ss}}$. Therefore, it remains to consider primes $v\notin \Sigma\backslash (\Sigma_0\cup \Sigma_p)$. 
\par First consider the case when $p\geq 5$. Consider two further cases, first consider the case when $v\nmid(\mathcal{N}_E/\closure{\mathcal{N}}_E)$. In this case, the injectivity of $h_v$ follows from the proof of \cite[Lemma 4.1.2]{EPW}. Next, consider the case when $v|(\mathcal{N}_E/\closure{\mathcal{N}}_E)$. Since $v$ is not contained in $\Sigma_0$, it follows that $\mu_p$ is contained in $F_v$ and $E$ has either non-split multiplicative reduction or additive reduction at $v$. In this case, the injectivity of $h_v$ follows from \cite[Proposition 5.1]{hachimat}. When $p=3$, the injectivity of $h_v$ follows from the same reasoning as above.
\end{proof}
\begin{Prop}\label{nofinitelambdasubs}
Let $E$ be an elliptic curve over a number field $F$ and $\Sigma_0$ any finite set of primes $v\nmid p$. Assume that: $(\mathrm{i})$ $E(F)[p]=0$, $(\mathrm{ii})$ $\op{Sel}^{\Sigma_0, \ddag} (E/F^{\op{cyc}})$ is cotorsion as a $\Z_p[[\Gamma]]$-module. Then the Selmer group $\op{Sel}^{\Sigma_0, \ddag} (E/F^{\op{cyc}})$ contains no proper finite index $\Z_p[[\Gamma]]$-submodules.
\end{Prop}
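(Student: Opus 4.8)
Throughout write $\Lambda:=\Z_p[[\Gamma]]$ (a regular local ring of Krull dimension $2$) and $A:=E[p^{\infty}]$. By Pontryagin duality, $\op{Sel}^{\Sigma_0,\ddag}(E/F^{\op{cyc}})$ has a proper finite-index $\Lambda$-submodule if and only if its dual $X:=\op{X}^{\Sigma_0,\ddag}(E/F^{\op{cyc}})$ has a nonzero finite $\Lambda$-submodule, so it suffices to rule out the latter. Since a finitely generated torsion $\Lambda$-module has a nonzero finite submodule precisely when $X^{\Gamma}=X[T]$ has nonzero $\Z_p$-torsion, this is in turn equivalent to $\op{Sel}^{\Sigma_0,\ddag}(E/F^{\op{cyc}})_{\Gamma}$ being a $\Z_p$-divisible group, and that is what I would establish. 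The argument follows the template of Greenberg--Vatsal \cite[Prop.\ 2.5]{greenbergvatsal}, with the input at the supersingular primes taken from \cite{BDKim} (see also \cite{ponsinet}).

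The hypothesis $E(F)[p]=0$ feeds in as follows. As $\Gamma$ is pro-$p$, a nonzero finite $\F_p[\Gamma]$-module has nonzero $\Gamma$-invariants; applied to $E(F^{\op{cyc}})[p]$, whose $\Gamma$-invariants equal $E(F)[p]=0$, this gives $E(F^{\op{cyc}})[p]=0$ and hence $H^0(F^{\op{cyc}},A)=0$ by $p$-divisibility of $A$. Feeding this into the Hochschild--Serre spectral sequence for $\op{G}_{\Sigma}(F)\twoheadrightarrow\Gamma$ (with $\op{cd}_p\Gamma=1$) yields $H^1(\op{G}_{\Sigma}(F^{\op{cyc}}),A)^{\Gamma}\cong H^1(\op{G}_{\Sigma}(F),A)$, and, using the weak Leopoldt vanishing $H^2(\op{G}_{\Sigma}(F^{\op{cyc}}),A)=0$ for the cyclotomic extension (a known theorem; cf.\ \cite{greenbergIT}), also $H^1(\op{G}_{\Sigma}(F^{\op{cyc}}),A)_{\Gamma}\cong H^2(\op{G}_{\Sigma}(F),A)$. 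The last group is $p$-divisible, since $\op{cd}_p\op{G}_{\Sigma}(F)\le 2$ ($p$ odd) and $A$ is $p$-divisible; dualizing, $\mathcal{Y}:=H^1(\op{G}_{\Sigma}(F^{\op{cyc}}),A)^{\vee}$ has $\mathcal{Y}[T]$ free over $\Z_p$, so $\mathcal{Y}$ has no nonzero finite $\Lambda$-submodule.

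Now the cotorsion hypothesis, via Proposition \ref{restrictionmapsurj}, makes the defining map $\Phi$ surjective, so
\[
0\longrightarrow \op{Sel}^{\Sigma_0,\ddag}(E/F^{\op{cyc}})\longrightarrow H^1(\op{G}_{\Sigma}(F^{\op{cyc}}),A)\xrightarrow{\ \Phi\ }\mathcal{P}\longrightarrow 0
\]
is exact, where $\mathcal{P}=\prod_{v\in\Sigma\setminus(\Sigma_{\op{ss}}\cup\Sigma_0)}\mathcal{H}_v(F^{\op{cyc}},A)\times\prod_{i=1}^{d}\mathcal{H}_{\mathfrak{p}_i}^{\ddag_i}(F^{\op{cyc}},A)$. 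Taking $\Gamma$-cohomology and using the two identifications of the previous paragraph, $\op{Sel}^{\Sigma_0,\ddag}(E/F^{\op{cyc}})_{\Gamma}$ is an extension of a subgroup of $H^2(\op{G}_{\Sigma}(F),A)$ by a quotient of $\mathcal{P}^{\Gamma}$, so its $\Z_p$-divisibility will follow once the local terms are controlled: for $v\in\Sigma\setminus\Sigma_p$ the $\Gamma$-invariants and coinvariants of $\prod_{\eta\mid v}H^1(F^{\op{cyc}}_{\eta},A)$ are identified (via Hochschild--Serre over $F_v$ and local Tate duality) with $H^1(F_v,A)$ and $H^2(F_v,A)$, whose non-divisible parts are governed by $E(F_v)[p^{\infty}]$ and cancel against the corresponding discrepancies in the global terms; for $v\in\Sigma_p\setminus\Sigma_{\op{ss}}$ one argues as in \cite{greenbergvatsal} with the quotient $D_v$ of \eqref{Ddef}; and for $v=\mathfrak{p}_i\in\Sigma_{\op{ss}}$ the role of $D_v$ is played by Kobayashi's plus/minus norm groups $\widehat{E}_{\mathfrak{p}_i}^{\pm}(F^{\op{cyc}}_{\eta})$, which are available thanks to parts $(3)$ and $(4)$ of Hypothesis \eqref{hypothesis} and whose relevant cohomology is computed exactly as in \cite[Prop.\ 2.10]{BDKim}. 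Assembling these in a diagram chase of the shape used in \cite[\S2]{greenbergvatsal} gives the $\Z_p$-divisibility of $\op{Sel}^{\Sigma_0,\ddag}(E/F^{\op{cyc}})_{\Gamma}$, and hence the proposition.

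The step I expect to be the main obstacle is this last local analysis, specifically at the supersingular primes together with the bookkeeping of the contributions of the exceptional primes $v\nmid p$ dividing $\mathcal{N}_E/\closure{\mathcal{N}}_E$ (at which the local cohomology groups fail to be divisible): because the conditions $\mathcal{H}_{\mathfrak{p}_i}^{\ddag_i}$ are not of Greenberg type, one cannot argue formally as for the good ordinary, multiplicative, and away-from-$p$ places, and must instead use the explicit structure of $\widehat{E}_{\mathfrak{p}_i}^{\pm}(F^{\op{cyc}}_{\eta})$ and its behaviour under corestriction along the cyclotomic tower to pin down the relevant invariants and coinvariants; the contributions of the remaining places are handled exactly as by Greenberg--Vatsal.
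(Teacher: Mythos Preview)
Your route---reducing to the $\Z_p$-divisibility of $\op{Sel}^{\Sigma_0,\ddag}(E/F^{\op{cyc}})_{\Gamma}$ and then attacking this via Hochschild--Serre together with a prime-by-prime analysis of $\mathcal{P}^{\Gamma}$ and $\mathcal{P}_{\Gamma}$, including the Kobayashi conditions at the supersingular primes---is a legitimate alternative, and is close in spirit to how \cite{BDKim} and \cite{ponsinet} handle the purely supersingular case over $\Q$. The paper, however, takes a different and rather slicker path that completely avoids the step you yourself flag as the main obstacle.

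Instead of analysing the signed local terms $\mathcal{H}_{\mathfrak{p}_i}^{\ddag_i}$ directly, the paper relaxes \emph{all} local conditions at the primes in $\Sigma_0\cup\Sigma_p$, forming a relaxed Selmer group $\op{Sel}^{\op{rel}}(E/F^{\op{cyc}})$ and its companion strict Selmer group $\op{Sel}^{\op{str}}(E/F^{\op{cyc}})$. By the surjectivity in Proposition~\ref{restrictionmapsurj}, $\op{Sel}^{\op{rel}}/\op{Sel}^{\Sigma_0,\ddag}$ is identified with the product of the local quotients at $p$; the one structural input from the $\pm$-theory is that each $\mathcal{H}_{\mathfrak{p}_i}^{\ddag_i}(F^{\op{cyc}},E[p^{\infty}])^{\vee}$ is free of rank one over $\Lambda$ (cf.\ \cite[Proposition~2.11]{BDKim}). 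Since a $\Lambda$-cofree quotient propagates the ``no proper finite-index submodule'' property downwards by \cite[Lemma~2.6]{greenbergvatsal}, it suffices to prove the property for $\op{Sel}^{\op{rel}}$. But $\op{Sel}^{\op{rel}}$ carries no condition whatsoever at $p$, so Greenberg's classical twisting argument \cite[Proposition~4.14]{greenbergIWEC} applies verbatim: one twists by powers of the cyclotomic character, uses that the cotorsion hypothesis forces the strict Selmer group of the dual twist to be finite for almost all $s$, and then invokes \cite[Proposition~4.13]{greenbergIWEC} (a Poitou--Tate surjectivity) to conclude.

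The upshot of the comparison is this: the paper never computes the $\Gamma$-cohomology of the $\pm$-norm groups, nor does it need the ``cancellation of non-divisible pieces'' that you appeal to. That cancellation is precisely the content of Greenberg's twisting/Poitou--Tate step, and in your sketch it is asserted rather than proved; note also that a subgroup of the divisible group $H^2(\op{G}_{\Sigma}(F),A)$ need not itself be divisible, so your extension argument for $(\op{Sel}^{\Sigma_0,\ddag})_{\Gamma}$ requires more than the divisibility of the outer terms (and \cite[Proposition~2.10]{BDKim} concerns the residual comparison maps, not the $\Gamma$-cohomology of $\mathcal{H}_{\mathfrak{p}_i}^{\ddag_i}$ that you need here). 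Your approach can be made to work, but it amounts to redoing the no-finite-submodule arguments of \cite{BDKim,ponsinet} in the mixed-reduction imprimitive setting, whereas the paper's relax-and-twist manoeuvre reduces everything to a single cofreeness fact plus Greenberg's off-the-shelf result.
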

\begin{proof} We adapt the proof of \cite[Proposition 4.14]{greenbergIWEC}, which is due to Greenberg. Let $\Sigma$ be a finite set of primes containing $\Sigma_0\cup \Sigma_p$ and the primes of $E$ at which $E$ has bad reduction. Consider the $\Sigma_0\cup \Sigma_p$-strict and relaxed Selmer groups:
\[\op{Sel}^{\op{rel}}(E/F^{\op{cyc}}):=\op{ker}\left\{H^1(F_{\Sigma}/F^{\op{cyc}},E[p^{\infty}])\rightarrow \prod_{v\in \Sigma\backslash (\Sigma_0\cup \Sigma_p)} \mathcal{H}_v(F^{\op{cyc}}, E[p^{\infty}])\right\},\]
\[\op{Sel}^{\op{str}}(E/F^{\op{cyc}}):=\op{ker}\left\{\op{Sel}^{\op{rel}}(E/F^{\op{cyc}})\rightarrow \prod_{v\in \Sigma_0\cup \Sigma_p} \prod_{\eta|v} H^1(F_{\eta}^{\op{cyc}}, E[p^{\infty}])\right\}.\]By Proposition $\ref{restrictionmapsurj}$, it follows that
\[\op{Sel}^{\op{rel}}(E/F^{\op{cyc}})/\op{Sel}^{\Sigma_0, \ddag} (E/F^{\op{cyc}})\simeq  \prod_{i=1}^d \mathcal{H}_{\mathfrak{p}_i}^{\ddag_i}(F^{\op{cyc}}, E[p^{\infty}]). \]For $i=1,\dots, d$, it follows from standard arguments (see the proof of \cite[Proposition 2.11]{BDKim}) that $ \mathcal{H}_{\mathfrak{p}_i}^{\ddag_i}(F^{\op{cyc}}, E[p^{\infty}])^{\vee}$ is isomorphic to $\Z_p[[\Gamma]]$. By \cite[Lemma 2.6]{greenbergvatsal}, it suffices to show that $\op{Sel}^{\op{rel}} (E/F^{\op{cyc}})$ has no proper finite index $\Z_p[[\Gamma]]$-submodules. Since $\op{Sel}^{\op{str}}(E/F^{\op{cyc}})^{\vee}$ is a quotient of $\op{Sel}^{\Sigma_0, \ddag} (E/F^{\op{cyc}})^{\vee}$, it is $\Z_p[[\Gamma]]$-torsion.
\par Recall that $\kappa$ denotes the $p$-adic cyclotomic character. For $s\in \Z$, let $A_s$ denote the twisted Galois module $E[p^{\infty}]\otimes \kappa^s $. Since $E(F)[p]=0$ and $\Gamma$ is pro-$p$, it follows that $E(F^{\op{cyc}})[p]=0$, and as a result, $H^0(F^{\op{cyc}}, A_s)=0$. Since $\Gamma$ is pro-$p$ it follows from standard arguments that $H^0(F, A_s)=0$ for all $s$. For a subfield $K$ of $F^{\op{cyc}}$, and $v$ a prime of $F$ which does not divide $p$, set $\mathcal{H}_v(K, A_s)$ to be the product $\prod_{\eta|v} H^1(K_{\eta}, A_s)/(A_s(K_{\eta})\otimes \Q_p/\Z_p)$, where $\eta$ ranges the finitely many primes of $K$ above $v$. Set $P^{\Sigma, \op{rel}}(K, A_s)$ to be the product \[P^{\Sigma, \op{rel}}(K, A_s):=\prod_{v\in \Sigma\backslash (\Sigma_0\cup \Sigma_p)} \mathcal{H}_v(K, A_s)\] and set $P^{\Sigma, \op{str}}(K, A_s)$ to be the product \[P^{\Sigma, \op{str}}(K, A_s):=\prod_{v\in \Sigma\backslash (\Sigma_0\cup \Sigma_p)} \mathcal{H}_v(K, A_s)\times \prod_{v\in \Sigma_0\cup \Sigma_p} H^1(K, A_s).\] Let $S_{A_s}^{\op{rel}}(K)$ and $S_{A_s}^{\op{str}}(K)$ be the Selmer groups defined as follows \[S_{A_s}^{\op{rel}}(K):=\ker\left(H^1(F_{\Sigma}/F, A_s)\rightarrow P^{\Sigma, \op{rel}}(K, A_s)\right),\] \[S_{A_s}^{\op{str}}(K):=\ker\left(H^1(F_{\Sigma}/F, A_s)\rightarrow P^{\Sigma, \op{str}}(K, A_s)\right).\] Since $\op{Sel}^{\op{str}}(E/F^{\op{cyc}})$ is $\Z_p[[\Gamma]]$-cotorsion, we have that $S_{A_s}^{\op{str}}(F^{\op{cyc}})^{\Gamma}$ is finite for all but finitely many values of $s$. Hence, $S_{A_s}^{\op{str}}(F)$ is finite for all but finitely many values of $s$. We set $M=A_s$, and in accordance with the proof of \cite[Proposition 4.14]{greenbergIWEC}, $M^*=A_{-s}$. Denote by $S_M(F)$ the Selmer group defined by the relaxed conditions $S_{A_s}^{\op{rel}}(F)$ and in accordance with the discussion on \cite[p. 100]{greenbergIWEC},  $S_{M^*}(F)$ is the strict Selmer group $S_{A_{-s}}^{\op{str}}(F)$. Let $s$ be such that $S_{M^*}(F)$ is finite. Since $S_{M^*}(F)$ is finite and $M^*(F)=0$, it follows that the map $H^1(F_{\Sigma}/F, M)\rightarrow P^{\Sigma, \op{rel}}(K, M)$ is surjective (see \cite[Proposition 4.13]{greenbergIWEC}). It follows from the proof of \cite[Proposition 4.14]{greenbergIWEC} that $\op{Sel}^{\op{rel}} (E/F^{\op{cyc}})$ has no proper finite index $\Z_p[[\Gamma]]$-submodules. This completes the proof.
\end{proof}
Recall that for $i=1,2$, the $\mu$-invariant (resp. $\lambda$-invariant) of the Selmer group $\op{Sel}^{\Sigma_1,\ddag}(E_i/F^{\op{cyc}})$ is denoted $\mu_{E_i}^{\Sigma_1,\ddag}$ (resp. $\lambda_{E_i}^{\Sigma_1,\ddag}$).
\begin{Th}\label{equalitymulambda}
Let $E_1$ and $E_2$ be elliptic curves over $F$ which are $p$-congruent. Let $\ddag$ be a signed vector and assume that $(E_1,E_2,\ddag)$ satisfies Hypothesis $\eqref{hypothesis2}$. Let $\Sigma_1$ be the set of primes as in Definition $\ref{sigmazerodef}$.
Then the following assertions hold:
\begin{enumerate}
    \item The $\mu$-invariant $\mu_{E_1}^{\Sigma_1,\ddag}$ is equal to zero if and only if $\mu_{E_2}^{\Sigma_1,\ddag}$ is equal to zero.
    \item If $\mu_{E_1}^{\Sigma_1,\ddag}=0$ (or equivalently $\mu_{E_2}^{\Sigma_1,\ddag}=0$), then the $\Sigma_1$-imprimitive $\lambda$-invariants $\lambda_{E_1}^{\Sigma_1,\ddag}$ and $\lambda_{E_2}^{\Sigma_1, \ddag}$ are equal.
\end{enumerate}
\end{Th}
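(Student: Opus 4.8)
The plan is to reduce both assertions to statements about the mod-$p$ Selmer groups $\op{Sel}^{\Sigma_1,\ddag}(E_i[p]/F^{\op{cyc}})$, which Proposition $\ref{prop42}$ identifies with one another through the fixed isomorphism $E_1[p]\simeq E_2[p]$. First I would record the standing facts needed throughout: $E_i(F)[p]=0$ by irreducibility of $E_i[p]$ (Hypothesis $\ref{hypothesis}$(2)); and the $\Sigma_1$-imprimitive Selmer groups $\op{Sel}^{\Sigma_1,\ddag}(E_i/F^{\op{cyc}})$ are $\Z_p[[\Gamma]]$-cotorsion, since the short exact sequence displayed just after Proposition $\ref{restrictionmapsurj}$ exhibits $\op{Sel}^{\Sigma_1,\ddag}(E_i/F^{\op{cyc}})$ as an extension of $\prod_{v\in\Sigma_1}\mathcal{H}_v(F^{\op{cyc}},E_i[p^{\infty}])$ (whose dual is torsion because every $v\in\Sigma_1$ is prime to $p$) by $\op{Sel}^{\ddag}(E_i/F^{\op{cyc}})$ (cotorsion by Hypothesis $\ref{hypothesis2}$). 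Write $\op{X}_i:=\op{X}^{\Sigma_1,\ddag}(E_i/F^{\op{cyc}})$, a finitely generated torsion $\Z_p[[T]]$-module; then $\op{X}_i/p\op{X}_i$ is Pontryagin dual to $\op{Sel}^{\Sigma_1,\ddag}(E_i/F^{\op{cyc}})[p]$, which by Proposition $\ref{isoresidualselmer}$ (with $\Sigma_0=\Sigma_1$, which contains $\Sigma_1(E_i)$) is isomorphic to $\op{Sel}^{\Sigma_1,\ddag}(E_i[p]/F^{\op{cyc}})$.

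For part (1): for a finitely generated torsion $\Z_p[[T]]$-module one has $\mu_{E_i}^{\Sigma_1,\ddag}=0$ if and only if $\op{X}_i$ is finitely generated over $\Z_p$, equivalently (topological Nakayama) $\op{X}_i/p\op{X}_i$ is finite, equivalently $\op{Sel}^{\Sigma_1,\ddag}(E_i[p]/F^{\op{cyc}})$ is finite. Since Proposition $\ref{prop42}$ supplies an isomorphism $\op{Sel}^{\Sigma_1,\ddag}(E_1[p]/F^{\op{cyc}})\simeq\op{Sel}^{\Sigma_1,\ddag}(E_2[p]/F^{\op{cyc}})$, one of these is finite precisely when the other is, and hence $\mu_{E_1}^{\Sigma_1,\ddag}=0 \iff \mu_{E_2}^{\Sigma_1,\ddag}=0$.

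For part (2): assume the common vanishing $\mu_{E_i}^{\Sigma_1,\ddag}=0$, so $\op{X}_i$ is finitely generated over $\Z_p$ and $\lambda_{E_i}^{\Sigma_1,\ddag}=\op{rank}_{\Z_p}\op{X}_i$. By Proposition $\ref{nofinitelambdasubs}$ (applicable since $E_i(F)[p]=0$ and $\op{Sel}^{\Sigma_1,\ddag}(E_i/F^{\op{cyc}})$ is cotorsion), $\op{Sel}^{\Sigma_1,\ddag}(E_i/F^{\op{cyc}})$ has no proper finite-index $\Z_p[[\Gamma]]$-submodule, equivalently (by exactness of Pontryagin duality) $\op{X}_i$ has no nonzero finite $\Z_p[[\Gamma]]$-submodule. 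The $\Z_p$-torsion submodule of $\op{X}_i$ is then finite (as $\op{X}_i$ is $\Z_p$-finitely generated) and is a $\Z_p[[\Gamma]]$-submodule, so it vanishes; thus $\op{X}_i$ is a free $\Z_p$-module, necessarily of rank $\lambda_{E_i}^{\Sigma_1,\ddag}$. Therefore $\dim_{\F_p}\op{Sel}^{\Sigma_1,\ddag}(E_i[p]/F^{\op{cyc}})=\dim_{\F_p}\op{X}_i/p\op{X}_i=\lambda_{E_i}^{\Sigma_1,\ddag}$, and invoking the isomorphism of Proposition $\ref{prop42}$ once more gives $\lambda_{E_1}^{\Sigma_1,\ddag}=\lambda_{E_2}^{\Sigma_1,\ddag}$.

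The delicate points I expect to spend the most care on are: checking that every hypothesis of Propositions $\ref{isoresidualselmer}$, $\ref{prop42}$ and $\ref{nofinitelambdasubs}$ is actually in force in this imprimitive multi-signed setting (especially cotorsionness of the imprimitive Selmer group, and the fact that the auxiliary primes of $\Sigma_1$ lie away from $p$ so that deleting their local conditions preserves torsionness); and the structure-theoretic step where $\mu=0$ combined with the absence of nonzero finite $\Z_p[[\Gamma]]$-submodules is upgraded to $\Z_p$-freeness of $\op{X}_i$. That last point is what makes $\lambda_{E_i}^{\Sigma_1,\ddag}$ \emph{equal} to the $\F_p$-dimension of the mod-$p$ Selmer group rather than merely bounded below by it, and it is the crux of transferring equality of $\lambda$-invariants across the congruence.
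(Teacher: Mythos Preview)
Your proposal is correct and follows essentially the same route as the paper's proof: combine Propositions~\ref{prop42} and~\ref{isoresidualselmer} to identify $\op{X}_1/p\op{X}_1\simeq\op{X}_2/p\op{X}_2$, deduce part~(1) from the criterion $\mu=0\Leftrightarrow\op{X}_i/p\op{X}_i$ finite, and for part~(2) invoke Proposition~\ref{nofinitelambdasubs} to upgrade $\mu=0$ to $\Z_p$-freeness of $\op{X}_i$ so that $\lambda_{E_i}^{\Sigma_1,\ddag}=\dim_{\F_p}\op{X}_i/p\op{X}_i$. Your write-up is in fact more explicit than the paper's in checking the standing hypotheses (cotorsionness of the imprimitive Selmer groups, $\Sigma_1\supseteq\Sigma_1(E_i)$, and $E_i(F)[p]=0$), which is all to the good.
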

\begin{proof}
For $i=1,2$, let $M_i$ denote the Pontryagin dual of $\op{Sel}^{\Sigma_1,\ddag}(E_i/F^{\op{cyc}})$. It follows from Propositions $\ref{prop42}$ and $\ref{isoresidualselmer}$ that $M_1/pM_1$ is isomorphic to $M_2/pM_2$. Note that $\mu_{E_i}^{\Sigma_1,\ddag}$ is equal to $0$ if and only if $M_i/pM_i$ is finite. Thus, it follows that if $\mu_{E_1}^{\Sigma_1,\ddag}$ is zero, then so is $\mu_{E_2}^{\Sigma_1,\ddag}$. 
\par Next, assume that $\mu_{E_1}^{\ddag}$ and $\mu_{E_2}^{\ddag}$ are both zero. Proposition $\ref{nofinitelambdasubs}$ asserts that $M_i$ contains no finite $\Z_p[[\Gamma]]$-submodules, and thus, $M_i$ is a free $\Z_p$-module of rank equal to $\lambda_{E_i}^{\Sigma_1, \ddag}$. Since $M_1/pM_1$ is isomorphic to $M_2/pM_2$, it follows that $\lambda_{E_1}^{\Sigma_1, \ddag}$ is equal to $\lambda_{E_2}^{\Sigma_1, \ddag}$.
\end{proof}

\section{Congruences for Euler Characteristics}
\par Consider elliptic curves $E_1$ and $E_2$ that are $p$-congruent and let $\Sigma^{\op{ss}}=\{\mathfrak{p}_1,\dots, \mathfrak{p}_d\}$ be the set of supersingular primes $v$ of $E_1$ such that $v|p$. As noted earlier, these are also the supersingular primes $v$ of $E_2$ such that $v|p$. Let $\ddag\in \{+,-\}^d$ be a signed vector and assume that Hypothesis $\eqref{hypothesis2}$ is satisfied for the triple $(E_1, E_2, \ddag)$. Associated to $E_1$ and $E_2$ is the set of primes $\Sigma_1$, see Definition $\ref{sigmazerodef}$. In this section, it is shown that there is an explicit relationship between the multi-signed Euler characteristics $\chi_t^{\ddag}(\Gamma, E_1)$ and $\chi_t^{\ddag}(\Gamma, E_2)$. 
\par Let $E$ be an elliptic curve satisfying Hypothesis $\eqref{hypothesis}$ and $\Sigma_0$ a finite set of primes $v\nmid p$. Recall that $r_{E}^{\ddag}$ is the order of vanishing of the characteristic polynomial $f_{E}^{\ddag}(T)$ at $T=0$. The following Proposition shows that the quantity $r_{E}^{\ddag}$ is related to the Mordell Weil rank of $E$ and the reduction type of $E$ at the primes $v|p$.
\begin{Prop}
Let $E$ be as above and assume that the following two conditions are satisfied:
\begin{enumerate}[label=(\roman*)]
    \item the truncated Euler characteristic $\chi_t^{\ddag}(\Gamma, E)$ is defined.
    \item The $p$-primary part of the Tate-Shafarevich group $\Sh(E/F)$ is finite.
\end{enumerate} Let $r$ be the Mordell-Weil rank of $E$ and $\op{sp}_E$ the number of primes $v|p$ at which $E$ has split multiplicative reduction. Then, we have that $r\leq r_E^{\ddag}\leq r+\op{sp}_E$. In particular, if there are no primes $v\in \Sigma_p$ at which $E$ has split multiplicative reduction, then $r_E^{\ddag}$ is equal to $r$.

\end{Prop}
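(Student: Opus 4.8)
The strategy is to relate the invariant $r_E^{\ddag}$ to the $\Z_p$-corank of various $\Gamma$-(co)invariants of the multi-signed Selmer group, and then to identify those coranks arithmetically. Write $M=\op{Sel}^{\ddag}(E/F^{\op{cyc}})$ and let $X^{\ddag}=M^{\vee}$. Since $\chi_t^{\ddag}(\Gamma,E)$ is defined, Lemma~\ref{TECbasiclemma} applies and gives $r_E^{\ddag}=\op{cork}_{\Z_p}M^{\Gamma}=\op{cork}_{\Z_p}M_{\Gamma}$. Dualizing, $M^{\Gamma}$ is dual to $(X^{\ddag})_{\Gamma}$ and $M_{\Gamma}$ is dual to $(X^{\ddag})^{\Gamma}$; since $X^{\ddag}$ is a torsion $\Z_p[[\Gamma]]$-module, these two $\Z_p$-ranks agree and equal the multiplicity of $T$ in $f_E^{\ddag}(T)$, i.e. $r_E^{\ddag}$. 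So the task is to compute $\op{cork}_{\Z_p}M^{\Gamma}$, i.e. the $\Z_p$-corank of $\op{Sel}^{\ddag}(E/F^{\op{cyc}})^{\Gamma}$.

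First I would compare $\op{Sel}^{\ddag}(E/F^{\op{cyc}})^{\Gamma}$ with $\op{Sel}^{\ddag}(E/F)$ (the analogous multi-signed Selmer group over $F$ itself, which at the supersingular primes just imposes the local condition $\widehat{E}_{\mathfrak{p}_i}(F_{\mathfrak{p}_i})\otimes\Q_p/\Z_p = \widehat{E}_{\mathfrak{p}_i}^{\pm}(F_{\mathfrak{p}_i})\otimes\Q_p/\Z_p$, since the signed conditions coincide with the usual one at the ground level $n=0$). Using the inflation–restriction sequence for $H^1(F^{\op{cyc}}/F,-)$ together with $H^0(F^{\op{cyc}},E[p^{\infty}])=0$ (which holds because $E[p]$ is irreducible and $\Gamma$ is pro-$p$, exactly as in the proof of Proposition~\ref{isoresidualselmer}), and controlling the cokernel via the local terms $\mathcal{H}_v$, one gets that $\op{Sel}^{\ddag}(E/F^{\op{cyc}})^{\Gamma}$ and $\op{Sel}(E/F)\otimes\Q_p/\Z_p$-type objects have the same $\Z_p$-corank up to bounded error — more precisely, a Mazur-style control theorem (as in \cite[section 5]{greenbergIT}, adapted to signed conditions as in \cite{leilim2}) shows the kernel and cokernel of the restriction map $\op{Sel}^{\ddag}(E/F)\to \op{Sel}^{\ddag}(E/F^{\op{cyc}})^{\Gamma}$ are finite. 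Hence $r_E^{\ddag}=\op{cork}_{\Z_p}\op{Sel}^{\ddag}(E/F)$.

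Next I would compute $\op{cork}_{\Z_p}\op{Sel}^{\ddag}(E/F)$ arithmetically. From the Kummer sequence and finiteness of $\Sh(E/F)[p^{\infty}]$, the usual $p^{\infty}$-Selmer group $\op{Sel}(E/F)$ has corank $r=\op{rank}E(F)$. The multi-signed Selmer group differs from $\op{Sel}(E/F)$ only at the primes $\mathfrak{p}_i\in\Sigma_{\op{ss}}$, where the local condition is the same (as noted above, at level $0$ the signed norm subgroup is all of $\widehat{E}_{\mathfrak{p}_i}(F_{\mathfrak{p}_i})$), and at the primes $v|p$ where $E$ has split multiplicative reduction, where the Greenberg-style local condition $H^1(F_{\eta}^{\op{cyc}},E[p^{\infty}])\to H^1(\op{I}_{\eta},D_v)$ can be strictly larger than the Kummer condition — this is precisely the source of the extra contribution $\op{sp}_E$. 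At a split multiplicative prime, $D_v\cong\Q_p/\Z_p$ and the discrepancy between the two local conditions has $\Z_p$-corank exactly $1$; at non-split multiplicative primes $D_v$ is a nontrivial unramified twist of $\Q_p/\Z_p$ so $H^1(\op{I}_v,D_v)^{G_v/\op{I}_v}$ contributes nothing, and at good ordinary primes the two conditions agree up to finite groups (standard, cf.\ \cite[section 2]{greenbergvatsal}). Assembling these local corank computations into the global Poitou–Tate / Greenberg Selmer comparison gives the two-sided bound $r\le r_E^{\ddag}\le r+\op{sp}_E$, with equality $r_E^{\ddag}=r$ when $\op{sp}_E=0$.

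The main obstacle is the control theorem: one must check that imposing the signed norm conditions at the supersingular primes does not destroy the finiteness of the kernel and cokernel of the base-change restriction map. This is where Hypothesis~\eqref{hypothesis} (in particular $F_{\mathfrak{p}_i}\simeq\Q_p$ and $a_v(E)=0$) and the structure of Kobayashi's norm groups enter; the relevant statement is essentially \cite[Proposition~4.6]{leilim2} together with the arguments of \cite{kobayashi}, and I would invoke these rather than reprove them. Once the control theorem is in hand, the remaining local corank bookkeeping at the primes above $p$ is routine.
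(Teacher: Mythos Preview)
Your overall strategy is the same as the paper's --- compare $\op{Sel}(E/F)$ with $\op{Sel}^{\ddag}(E/F^{\op{cyc}})^{\Gamma}$ via a fundamental diagram and control the local discrepancies --- but the bookkeeping is misplaced in a way that creates a genuine gap.

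You assert that a Mazur-style control theorem gives that the restriction map $\op{Sel}^{\ddag}(E/F)\to \op{Sel}^{\ddag}(E/F^{\op{cyc}})^{\Gamma}$ has finite kernel and cokernel, and hence $r_E^{\ddag}=\op{cork}_{\Z_p}\op{Sel}^{\ddag}(E/F)$. But, as you yourself observe, at level $n=0$ the signed local condition coincides with the usual Kummer condition, so $\op{Sel}^{\ddag}(E/F)=\op{Sel}(E/F)$ and its corank is exactly $r$ under the $\Sh$-finiteness assumption. Your argument would therefore force $r_E^{\ddag}=r$ unconditionally, which is not what the proposition says. The point is that the control theorem \emph{fails} to have finite cokernel precisely at primes $v\mid p$ of split multiplicative reduction: there the local restriction map has kernel contained in $H^1(\Gamma_v,H^0(F_w^{\op{cyc}},D_v))$ with $D_v\simeq\Q_p/\Z_p$ (trivial action), which has $\Z_p$-corank one. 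This is the well-known exceptional-zero phenomenon, and it is exactly the source of the contribution $\op{sp}_E$.

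Your third paragraph tries to recover $\op{sp}_E$ from a discrepancy between $\op{Sel}^{\ddag}(E/F)$ and $\op{Sel}(E/F)$, invoking the ``Greenberg-style local condition $H^1(F_{\eta}^{\op{cyc}},E[p^{\infty}])\to H^1(\op{I}_{\eta},D_v)$''. But that condition lives at the $F^{\op{cyc}}$ level, not at the level of $F$; at $F$ there is no such discrepancy since the two Selmer groups agree. The paper's proof avoids this confusion by comparing $\op{Sel}(E/F)$ directly with $\op{Sel}^{\ddag}(E/F^{\op{cyc}})^{\Gamma}$ in a single diagram, obtaining a short exact sequence $0\to\op{Sel}(E/F)\to\op{Sel}^{\ddag}(E/F^{\op{cyc}})^{\Gamma}\to\ker h\to 0$, and then bounding $\op{cork}_{\Z_p}\ker h\le\op{sp}_E$ by a prime-by-prime analysis of the local restriction maps (finite kernel away from $p$, finite at good ordinary and supersingular primes above $p$, corank $\le 1$ at split multiplicative primes above $p$). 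Your last paragraph flags the supersingular primes as ``the main obstacle'', but in fact the supersingular local map is injective; the obstacle is at the split multiplicative primes.
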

\begin{proof}
Let $\Sigma$ be a finite set of primes containing $\Sigma_p$ and the primes at which $E$ has bad reduction. By Lemma $\ref{TECbasiclemma}$, the multi-signed rank $r_E^{\ddag}$ is equal to the $\Z_p$-corank of $\op{Sel}^{\ddag}(E/F^{\op{cyc}})^{\Gamma}$. We compare $\op{Sel}^{\ddag}(E/F^{\op{cyc}})^{\Gamma}$ with the usual Selmer group $\op{Sel}(E/F)$. Let ${\Phi}_{E,F}: H^1(F_{\Sigma}/F, E[p^{\infty}])\rightarrow \prod_{v\in \Sigma} \mathcal{H}_v(F, E[p^{\infty}])$ be the defining map for the Selmer group $\op{Sel}(E/F)$. Consider the fundamental diagram
\[
\begin{tikzcd}[column sep = small, row sep = large]
0\arrow{r} & \op{Sel}(E/F) \arrow{r}  \arrow{d}{f} & H^1(F_{\Sigma}/F, E[p^{\infty}]) \arrow{r} \arrow{d}{g} & \op{im} {\Phi}_{E,F} \arrow{r} \arrow{d}{h} & 0\\
0\arrow{r} & \op{Sel}^{ \ddag}(E/F^{\op{cyc}})^{\Gamma} \arrow{r} & H^1(F_{\Sigma}/F^{\op{cyc}}, E[p^{\infty}])^{\Gamma} \arrow{r}  & \op{im} {\Phi}_{E,F^{\op{cyc}}}^{\ddag}\arrow{r}  & 0,\\
\end{tikzcd}\]
where the vertical maps are induced by restriction.
Since $E(F)[p]=0$ and $\Gamma$ is pro-$p$, it follows that $H^0(F^{\op{cyc}}, E[p^{\infty}])=0$. It follows from the inflation-restriction sequence that $g$ is an isomorphism. Thus, we have the following short exact sequence:
\begin{equation}\label{ses2}0\rightarrow \op{Sel}(E/F)\rightarrow \op{Sel}^{ \ddag}(E/F^{\op{cyc}})^{\Gamma}\rightarrow \op{ker}h\rightarrow 0.\end{equation}
It suffices to show that the kernel of $h$ has corank less than or equal to $\op{sp}_E$. For $v\in \Sigma\backslash \Sigma_{\op{ss}}$, let $h_v'$ denote the natural map 
\[h_v': \mathcal{H}_v(F, E[p^{\infty}])\rightarrow \mathcal{H}_v(F^{\op{cyc}}, E[p^{\infty}]).\] For $\mathfrak{p}_i\in \Sigma_{\op{ss}}$, set $h_{\mathfrak{p}_i}'$ denotes the map
\[h_{\mathfrak{p}_i}': \mathcal{H}_{\mathfrak{p}_i}^{\ddag_i}(F, E[p^{\infty}])\rightarrow \mathcal{H}_{\mathfrak{p}_i}^{\ddag_i}(F^{\op{cyc}}, E[p^{\infty}]).\]
\par Let $h'$ be the product of the maps $h_v'$ as $v$ ranges over $\Sigma$. Note that the kernel of $h$ is contained in the kernel of $h'$. Let $\Sigma_{\op{sp}}(E)$ be the set of primes $v|p$ at which $E$ has split multiplicative reduction. We show that $\op{ker} h_v'$ is finite if $v\notin \Sigma_{\op{sp}}(E)$ and of corank one for $v\in \Sigma_{\op{sp}}(E)$. For $v\in \Sigma\backslash \Sigma_p$, this follows from \cite[Lemma3.4]{GCEC}, for $v\in \Sigma_p$ at which $E$ has good ordinary reduction, it follows from \cite[Proposition 3.5]{GCEC}. Next consider the case where $v|p$ and $E$ has multiplicative reduction at $v$.
\par The Galois representation of the local Galois group $\op{G}_{F_v}$ on $E[p]$ is of the form $\mtx{\varphi_v \kappa}{\ast}{}{\varphi_v^{-1}}$, where $\varphi_v$ is an unramified character. Let $w$ be a prime of $F^{\op{cyc}}$ above $v$ and let $\Gamma_v$ denote $\op{Gal}(F_w^{\op{cyc}}/F_v)$. By Shapiro's Lemma,
\[H^0(\Gamma, \prod_{\eta|v} H^1(F_{\eta}^{\op{cyc}}, D_v))\simeq H^0(\Gamma_v, H^1(F_{w}^{\op{cyc}}, D_v)).\] The kernel of $h_v'$ is contained in the kernel of the restriction map 
\[H^1(F_v, D_v)\rightarrow H^1(F_w^{\op{cyc}}, D_v)\] and by the inflation restriction sequence, the kernel of $h_v'$ is contained in $H^1(\Gamma_v, H^0(F_w^{\op{cyc}},D_v))$. When $E$ has nonsplit multiplicative reduction at $v$, the character $\varphi_v$ is non-trivial and thus $H^0(F_v, D_v)=0$. Since $\Gamma_v$ is pro-$p$, it follows that $H^0(F_w^{\op{cyc}},D_v)=0$ when $\varphi_v\neq 1$. On the other hand, when $\varphi_v=1$, the Galois action on $H^0(F_{w}^{\op{cyc}}, D_v)\simeq \Q_p/\Z_p$ is trivial. As a result, the corank of $H^1(\Gamma_v, H^0(F_{w}^{\op{cyc}}, D_v))$ is one when $E$ has split multiplicative reduction at $v$.
\par Finally, consider the case when $v|p$ and $E$ has supersingular reduction at $v$. In this case, $h_v'$ is injective, as shown in the proof of \cite[Theorem 5.3]{leilim2}.
\par Putting all this together, we have that $\op{corank}_{\Z_p} \op{ker} h\leq \op{sp}_E$, and thus, from the short exact sequence $\eqref{ses2}$, we have
\[r_E^{\ddag}\leq \op{corank}_{\Z_p}\op{Sel}(E/F)+\op{sp}_E=r+ \op{corank}_{\Z_p}\Sh(E/F)[p^{\infty}]+\op{sp}_E.\]Since it is assumed that $\Sh(E/F)[p^{\infty}]$ is finite, the result follows.
\end{proof}

\par We now study congruences for truncated Euler characteristics. For $v\in \Sigma_0$, set $h_{E}^{(v)}(T)$ to be the characteristic polynomial of the Pontryagin dual of $\mathcal{H}_v(F^{\op{cyc}},E[p^{\infty}])$. Since $ \op{Sel}^{\ddag}(E/F^{\op{cyc}})$ is $\Z_p[[\Gamma]]$-cotorsion, it follows from Proposition $\ref{restrictionmapsurj}$ that there is a short exact sequence:
\[0\rightarrow \Sel^{\ddag}(E/F^{\op{cyc}})\rightarrow \Sel^{\ddag,\Sigma_0}(E/F^{\op{cyc}})\rightarrow \prod_{v\in \Sigma_0} \mathcal{H}_v(F^{\op{cyc}}, E[p^{\infty}])\rightarrow 0.\] As a result, we arrive at the following relation:
\begin{equation}\label{imprimitivetoprimitive}f_{E}^{\Sigma_0, \ddag}(T)=f_{E}^{\ddag}(T)\prod_{l\in \Sigma_0} h_{E}^{(v)}(T).\end{equation}
Recall that $V_p(E)$ is the $p$-adic vector space $T_p(E)\otimes \Q_p$, equipped with $\op{Gal}(\closure{F}/F)$-action. Let $P_v(E,T)$ denote the characteristic polynomial \begin{equation}\label{pvdefinition}P_v(E,T):=\op{det}\left((\op{Id}-\op{Frob}_v X)_{|V_p(E)^{\op{I}_v}}\right),\end{equation}where $\op{I}_v$ is the inertia group at $v$. For $s\in \mathbb{C}$, set $L_v(E,s)$ to denote $P_v(E,Nv^{-s})^{-1}$, where $Nv$ denotes the norm $N_{L/\Q}v$. Recall that  $\gamma$ is a topological generator of $\Gamma$. Let $\rho:\Gamma\rightarrow \mu_{p^{\infty}}$ be a finite order character and let $\sigma_v$ denote the Frobenius at $v$. Let $\mathcal{P}_v(E,T)$ be the element in $\Z_p[[\Gamma]]$ defined by the relation
$\mathcal{P}_v(E,\rho(\gamma)-1)=P_v(E,\rho(\sigma_v) Nv^{-1})$. According to \cite[Proposition 2.4]{greenbergvatsal}, the polynomial $\mathcal{P}_v(E,T)$ generates the Pontryagin dual of $\mathcal{H}_v(F^{\op{cyc}}, E[p^{\infty}])$ and thus coincides with $h^{(v)}(T)$ up to a unit in $\Z_p[[\Gamma]]$. 
\begin{Def}\label{PhiDef} Set $\Phi_{E,\Sigma_0}$ to be the product $\prod_{v\in \Sigma_0} |L_v(E, 1)|_p$. Here, $|L_v(E, 1)|_p$ is set to be equal to $0$ if $L_v(E, 1)^{-1}=0$.
\end{Def}
Write \[f_{E}^{\Sigma_0,\ddag}(T)= T^{r_E^{\ddag}} g_{E}^{\Sigma_0,\ddag}(T),\] and set $|g_{E,\Sigma_0}(0)|_{p}^{-1}$ to be zero if $g_{E,\Sigma_0}(0)$ equals zero. It follows from the relation $\eqref{imprimitivetoprimitive}$ that \begin{equation}\label{imprimitivetoprimitive2}g_{E}^{\Sigma_0, \ddag}(T)=g_{E}^{\ddag}(T)\prod_{l\in \Sigma_0} h_{E}^{(v)}(T).\end{equation}One has the following result.
\begin{Lemma}\label{Lemma53}
Let $E$ be an elliptic curve satisfying Hypothesis $\eqref{hypothesis}$ and let $\ddag$ be a signed vector, for a finite set $\Sigma_0$ consisting of primes $v\nmid p$. Then we have the following equality:
\[\Phi_{E,\Sigma_0}\times \chi_t(\Gamma, E)=|g_{E}^{\Sigma_0, \ddag}(0)|_{p}^{-1}.\]
\end{Lemma}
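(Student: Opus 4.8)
The plan is to combine the factorization $\eqref{imprimitivetoprimitive2}$ with the explicit description of the local factors $h_E^{(v)}(T)$ supplied by \cite[Proposition 2.4]{greenbergvatsal}, and then evaluate everything at $T=0$. Concretely, one starts from
\[g_{E}^{\Sigma_0, \ddag}(T)=g_{E}^{\ddag}(T)\prod_{v\in \Sigma_0} h_{E}^{(v)}(T),\]
takes $p$-adic absolute values of the constant terms, and uses the multiplicativity of $|\cdot|_p$ to get
\[|g_{E}^{\Sigma_0, \ddag}(0)|_p = |g_E^{\ddag}(0)|_p \cdot \prod_{v\in \Sigma_0} |h_E^{(v)}(0)|_p.\]
By Lemma $\ref{TECbasiclemma}$ applied to $M=\op{Sel}^{\ddag}(E/F^{\op{cyc}})$ (legitimate since $(E,\ddag)$ satisfies Hypothesis $\eqref{hypothesis1}$ via Hypothesis $\eqref{hypothesis2}$), we have $\chi_t(\Gamma, E)=\chi_t^{\ddag}(\Gamma,E)=|g_E^{\ddag}(0)|_p^{-1}$. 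So the lemma reduces to the local identity
\[\prod_{v\in \Sigma_0} |h_E^{(v)}(0)|_p = \Phi_{E,\Sigma_0}=\prod_{v\in \Sigma_0}|L_v(E,1)|_p,\]
i.e. to showing $|h_E^{(v)}(0)|_p = |L_v(E,1)|_p$ for each $v\in \Sigma_0$.

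For the local identity, I would invoke the cited result that $h_E^{(v)}(T)$ coincides, up to a unit of $\Z_p[[\Gamma]]$, with $\mathcal{P}_v(E,T)$, where $\mathcal{P}_v(E,\rho(\gamma)-1)=P_v(E,\rho(\sigma_v)Nv^{-1})$. Evaluating at the trivial character $\rho$ (so $\rho(\gamma)=1$, giving $T=0$, and $\rho(\sigma_v)=1$) yields $\mathcal{P}_v(E,0)=P_v(E,Nv^{-1})=L_v(E,1)^{-1}$. Hence $h_E^{(v)}(0)$ equals $L_v(E,1)^{-1}$ up to a $p$-adic unit, and in particular $|h_E^{(v)}(0)|_p = |L_v(E,1)^{-1}|_p = |L_v(E,1)|_p^{-1}$. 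Wait — this gives the reciprocal of what is wanted; I need to be careful about whether $h_E^{(v)}$ is the characteristic polynomial of $\mathcal{H}_v(F^{\op{cyc}},E[p^\infty])$ itself or of its Pontryagin dual, and whether $|L_v(E,1)|_p$ in Definition $\ref{PhiDef}$ should be read with $L_v(E,1)=P_v(E,Nv^{-1})^{-1}$. Tracking the conventions: $h_E^{(v)}(T)$ is the characteristic polynomial of the Pontryagin dual of $\mathcal{H}_v(F^{\op{cyc}},E[p^\infty])$ (stated just before Lemma $\ref{Lemma53}$), which up to a unit is $\mathcal{P}_v(E,T)$; so $|h_E^{(v)}(0)|_p = |P_v(E,Nv^{-1})|_p = |L_v(E,1)^{-1}|_p$, and since $|L_v(E,1)|_p$ in Definition $\ref{PhiDef}$ is the $p$-adic absolute value of $L_v(E,1)=P_v(E,Nv^{-1})^{-1}$, we indeed get $|h_E^{(v)}(0)|_p = |L_v(E,1)|_p^{-1}$. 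This suggests the correct statement pairs $\Phi_{E,\Sigma_0}$ with $|g_E^{\Sigma_0,\ddag}(0)|_p$, i.e. $\Phi_{E,\Sigma_0}\cdot\chi_t(\Gamma,E)=\Phi_{E,\Sigma_0}\cdot|g_E^{\ddag}(0)|_p^{-1}$ and $\prod_v|h_E^{(v)}(0)|_p^{-1}\cdot|g_E^{\ddag}(0)|_p^{-1}=|g_E^{\Sigma_0,\ddag}(0)|_p^{-1}$ — so one must check the sign/reciprocal conventions in Definition $\ref{PhiDef}$ match, which I expect is exactly how $\Phi_{E,\Sigma_0}$ is set up.

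So the clean write-up is: (1) cite Lemma $\ref{TECbasiclemma}$ for $\chi_t(\Gamma,E)=|g_E^{\ddag}(0)|_p^{-1}$; (2) quote $\eqref{imprimitivetoprimitive2}$; (3) take $|\cdot|_p$ of constant terms; (4) identify $|h_E^{(v)}(0)|_p$ with the local $L$-factor via \cite[Proposition 2.4]{greenbergvatsal} evaluated at the trivial character, matching Definition $\ref{PhiDef}$; (5) assemble. The one genuine subtlety — \textbf{the main obstacle} — is the bookkeeping of reciprocals and the degenerate case: when $L_v(E,1)^{-1}=0$ (i.e. $P_v(E,Nv^{-1})=0$, so $v$ contributes a zero of $h_E^{(v)}$ at $T=0$), Definition $\ref{PhiDef}$ sets $|L_v(E,1)|_p=0$, and correspondingly $h_E^{(v)}(0)=0$ forces $g_E^{\Sigma_0,\ddag}(0)=0$, so $|g_E^{\Sigma_0,\ddag}(0)|_p^{-1}=0=\Phi_{E,\Sigma_0}$ by the stated conventions; one should note that both sides vanish simultaneously in this case, so the asserted equality still holds. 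Once the conventions are pinned down this is essentially a one-line manipulation, and I would present it as such.
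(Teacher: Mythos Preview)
Your proposal is correct and follows essentially the same approach as the paper: both use the factorization $\eqref{imprimitivetoprimitive2}$, identify $h_E^{(v)}(0)$ with $L_v(E,1)^{-1}$ up to a $p$-adic unit via \cite[Proposition 2.4]{greenbergvatsal} evaluated at the trivial character, and then invoke Lemma~$\ref{TECbasiclemma}$ for $\chi_t(\Gamma,E)=|g_E^{\ddag}(0)|_p^{-1}$. Your explicit treatment of the degenerate case $L_v(E,1)^{-1}=0$ (where both sides vanish by convention) is a small addition that the paper handles only implicitly, and your worry about the reciprocals is resolved exactly as the paper does: $|h_E^{(v)}(0)|_p^{-1}=|L_v(E,1)|_p$, so the product over $\Sigma_0$ gives $\Phi_{E,\Sigma_0}$ on the nose.
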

\begin{proof}
Note that $\mathcal{P}_v(E,0)$ equals $P_v(E,l^{-1})=L_v(E,1)^{-1}$. Therefore, it follows that $h^{(v)}(0)$ coincides with $|L_v(E,1)|_p$ up to a unit in $\Z_p$. From the relation $\eqref{imprimitivetoprimitive2}$, we have that \[|g_{E}^{\Sigma_0, \ddag}(0)|_p^{-1}= |g_{E}^{\ddag}(0)|_p^{-1}\prod_{l\in \Sigma_0} |L_v(E,1)|_p=|g_{E}^{\ddag}(0)|_p^{-1}\times \Phi_{E,\Sigma_0}.\] Lemma $\ref{TECbasiclemma}$ asserts that $\chi_t(\Gamma, E)$ is equal to $|g_{E}^{\ddag}(0)|_p^{-1}$, and this completes the proof.
\end{proof}
Denote the $\mu$-invariants of $\op{Sel}^{\ddag}(E/F^{\op{cyc}})$ and $\op{Sel}^{\Sigma_0,\ddag}(E/F^{\op{cyc}})$ by $\mu_{E}^{ \ddag}$ and $\mu_{E}^{\Sigma_0, \ddag}$ respectively. Denote the $\lambda$-invariants by $\lambda_{E}^{ \ddag}$ and $\lambda_{E}^{\Sigma_0, \ddag}$ respectively. It is easy to show that for $v\in \Sigma_0$, the $\mu$-invariant of $h_{E}^{(v)}(T)$ is equal to zero, and hence $\mu_{E}^{\Sigma_0, \ddag}$ is equal to $\mu_E^{\ddag}$.
\begin{Lemma}\label{pretheoremlemma}
Let $E$ be an elliptic curve satisfying Hypothesis $\eqref{hypothesis}$ and let $\ddag$ be a signed vector. For a finite set $\Sigma_0$ consisting of primes $v\nmid p$, we have that \[\Phi_{E,\Sigma_0}\times \chi_t(\Gamma, E)=1\Leftrightarrow \mu_E^{\ddag}=0 \text{ and }\lambda_{E}^{\Sigma_0, \ddag}=r_E^{\ddag}.\]
\end{Lemma}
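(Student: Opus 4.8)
The plan is to chain together Lemma~\ref{Lemma53} and Lemma~\ref{TECmulambda} (or rather the argument underlying the latter), together with the observation that inserting the local factors $h_E^{(v)}(T)$ for $v\in\Sigma_0$ changes the $\lambda$-invariant but not the $\mu$-invariant.

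First I would recall from Lemma~\ref{Lemma53} the identity $\Phi_{E,\Sigma_0}\times\chi_t(\Gamma,E)=|g_E^{\Sigma_0,\ddag}(0)|_p^{-1}$, so that the left-hand side of the claimed equivalence becomes the statement $|g_E^{\Sigma_0,\ddag}(0)|_p^{-1}=1$, i.e.\ $g_E^{\Sigma_0,\ddag}(0)$ is a $p$-adic unit. Next I would run the same dichotomy as in the proof of Lemma~\ref{TECmulambda}, but applied to the imprimitive characteristic polynomial $f_E^{\Sigma_0,\ddag}(T)=T^{r_E^{\ddag}}g_E^{\Sigma_0,\ddag}(T)$ rather than to $f_M$. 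The point is that $f_E^{\Sigma_0,\ddag}$ and $f_E^{\ddag}$ vanish to the \emph{same} order $r_E^{\ddag}$ at $T=0$: by the remark preceding this lemma, $\mu_E^{\Sigma_0,\ddag}=\mu_E^{\ddag}$, and since each $h_E^{(v)}(T)=\mathcal{P}_v(E,T)$ is (up to a unit) a polynomial whose value at $0$ is $L_v(E,1)^{-1}$, which is \emph{nonzero} for $v\nmid p$ (the Euler factor $P_v(E,Nv^{-1})$ of a curve with good or bad reduction at $v$ never vanishes at $s=1$ — indeed $P_v(E,X)$ has constant term $1$), multiplying by the $h_E^{(v)}$ does not introduce extra factors of $T$. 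Hence $r_{E}^{\ddag}$ is the order of vanishing of $f_E^{\Sigma_0,\ddag}$ as well.

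\textbf{Forward direction.} Suppose $|g_E^{\Sigma_0,\ddag}(0)|_p^{-1}=1$. If $\mu_E^{\Sigma_0,\ddag}=\mu_E^{\ddag}>0$ then $f_E^{\Sigma_0,\ddag}(T)$ is divisible by $p$, forcing $g_E^{\Sigma_0,\ddag}(T)$ divisible by $p$ (since $T^{r_E^{\ddag}}$ absorbs no $p$), so $g_E^{\Sigma_0,\ddag}(0)\equiv 0\pmod p$, contradicting that it is a unit. Therefore $\mu_E^{\ddag}=0$, so $g_E^{\Sigma_0,\ddag}(T)$ is a distinguished polynomial whose constant term is a unit; a distinguished polynomial with unit constant term must be constant equal to $1$, hence $g_E^{\Sigma_0,\ddag}(T)=1$ and $f_E^{\Sigma_0,\ddag}(T)=T^{r_E^{\ddag}}$, giving $\lambda_E^{\Sigma_0,\ddag}=\deg f_E^{\Sigma_0,\ddag}=r_E^{\ddag}$. \textbf{Converse.} If $\mu_E^{\ddag}=0$ and $\lambda_E^{\Sigma_0,\ddag}=r_E^{\ddag}$, then $f_E^{\Sigma_0,\ddag}(T)$ is distinguished of degree $r_E^{\ddag}$ and divisible by $T^{r_E^{\ddag}}$, so $f_E^{\Sigma_0,\ddag}(T)=T^{r_E^{\ddag}}$ and $g_E^{\Sigma_0,\ddag}(T)=1$; thus $|g_E^{\Sigma_0,\ddag}(0)|_p^{-1}=1$ and by Lemma~\ref{Lemma53} the left-hand side equals $1$.

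The only mild subtlety — what I would be careful to state correctly — is the claim that passing from $f_E^{\ddag}$ to $f_E^{\Sigma_0,\ddag}$ does not change the order of vanishing at $T=0$; this rests on the non-vanishing of the local Euler factors $L_v(E,1)^{-1}=P_v(E,Nv^{-1})$ for $v\nmid p$, equivalently $h_E^{(v)}(0)\neq 0$, which is exactly what makes the factorization $\eqref{imprimitivetoprimitive2}$ genuinely a factorization of $g$'s. Everything else is the verbatim distinguished-polynomial bookkeeping of Lemma~\ref{TECmulambda}, so I would simply cite that proof for the structural steps and only spell out the $\Sigma_0$-dependence.
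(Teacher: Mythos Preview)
Your proof is correct and follows essentially the same route as the paper: both reduce to Lemma~\ref{Lemma53} and then argue via the structure of the characteristic polynomial $f_E^{\Sigma_0,\ddag}(T)=T^{r_E^{\ddag}}g_E^{\Sigma_0,\ddag}(T)$, exactly as in Lemma~\ref{TECmulambda}. The only difference is cosmetic: you explicitly verify that $h_E^{(v)}(0)\neq 0$ for $v\nmid p$ (so that the order of vanishing of $f_E^{\Sigma_0,\ddag}$ at $T=0$ is exactly $r_E^{\ddag}$), whereas the paper sidesteps this by formally allowing $g_E^{\Sigma_0,\ddag}(0)=0$ in its conventions and instead emphasizes that $p\nmid h_E^{(v)}(T)$ to pass between $\mu_E^{\ddag}$ and $\mu_E^{\Sigma_0,\ddag}$; but both arguments are logically equivalent and rely on the same factorization $\eqref{imprimitivetoprimitive}$.
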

\begin{proof}
First assume that $\Phi_{E,\Sigma_0}\times \chi_t(\Gamma, E)=1$, it is then shown that $\mu_E^{\ddag}=0$ and $\lambda_{E}^{\Sigma_0, \ddag}=r_E^{\ddag}$. Recall that $f_{E}^{\Sigma_0,\ddag}(T)$ is the characteristic polynomial for the Selmer group $\op{Sel}^{\Sigma_0,\ddag}(E/F^{\op{cyc}})$ and is written as $T^{r_E^{\ddag}}g_{E}^{\Sigma_0,\ddag}(T)$. It follows from Lemma $\ref{Lemma53}$ that $g_{E}^{\Sigma_0,\ddag}(0)$ is a unit in $\Z_p$, and thus $g_{E}^{\Sigma_0,\ddag}(T)$ is a unit in $\Z_p[[\Gamma]]$. It follows from this that $\mu_{E,\Sigma_0}^{\ddag}=0$ and $\lambda_{E,\Sigma_0}^{\ddag}=r_{E}^{\ddag}$. The following relation is satisfied:
\[f_{E,\Sigma_0}^{\ddag}(T)=f_E^{\ddag}(T)\times \prod_{v\in \Sigma_0} h^{(v)}_E(T)\](see $\eqref{imprimitivetoprimitive}$). According to \cite[Proposition 2.4]{greenbergvatsal}, the polynomial $\mathcal{P}_v(E,T)$ generates the Pontryagin dual of $\mathcal{H}_v(F^{\op{cyc}}, E[p^{\infty}])$ and thus coincides with $h^{(v)}(T)$ up to a unit in $\Z_p[[\Gamma]]$. It is easy to see that $\mathcal{P}_v(E,T)$ is equal to a monic polynomial, up to a unit in $\Z_p$. As a result, $p$ does not divide $h^{(v)}(T)$. It follows that $\mu_{E}^{\ddag}=\mu_{E,\Sigma_0}^{\ddag}=0$. 
\par Conversely, suppose that $\mu_{E}^{\ddag}=0$ and that $\lambda_{E,\Sigma_0}^{\ddag}=r_E^{\ddag}$. The degree of $f_{E,\Sigma_0}^{\ddag}(T)$ is equal to $\lambda_{E,\Sigma_0}^{\ddag}=r_E^{\ddag}$. Since $f_{E,\Sigma_0}^{\ddag}(T)$ is expressed as $T^{r_E^{\ddag}} g_{E,\Sigma_0}(T)$, the degree of $g_{E,\Sigma_0}(T)$ is equal to zero. It has been shown that $\mu_{E,\Sigma_0}^{\ddag}=\mu_{E}^{\ddag}=0$. It follows that $g_{E,\Sigma_0}(T)$ is a unit in $\Z_p$. Lemma $\ref{Lemma53}$ asserts that $\Phi_{E,\Sigma_0}\times \chi_t(\Gamma, E)$ is equal to $|g_{E,\Sigma_0}(0)|_p^{-1}$ and therefore equal to $1$.
\end{proof}
Next, we come to the main theorem of the section. It is shown that the truncated Euler characteristic of $E_1$ is related to that of $E_2$ after one accounts for the auxiliary set of primes $\Sigma_1$. The smaller the set of primes $\Sigma_1$, the more refined the congruence relation between truncated Euler characteristics is. This is why it is of considerable importance that the set of primes $\Sigma_1$ be carefully chosen to be as small as possible. We note that in the statement of \cite[Theorem 3.3]{raysujatha}, the elliptic curves $E_1$ and $E_2$ were defined over $\Q$ with good ordinary reduction at $p$. Furthermore, the set of auxiliary primes $\Sigma_0$ was the full set of primes $v\nmid p$ at which $E_1$ or $E_2$ has bad reduction. The set of primes $\Sigma_1$ is smaller and hence, when $F=\Q$ the result below refines the result \cite[Theorem 3.3]{raysujatha}.
\begin{Th}\label{gammacongruence}
Let $E_1$ and $E_2$ be elliptic curves which satisfy Hypothesis $\eqref{hypothesis2}$ mentioned in the introduction. Then, the following assertions hold.
\begin{enumerate}
    \item\label{gammacongruencec1} Suppose $r_{E_1}^{\ddag}$ is equal to $r_{E_2}^{\ddag}$. Then, $\Phi_{E_1,\Sigma_1}\times  \chi_t(\Gamma, E_1)$ is equal to $1$ if and only if $\Phi_{E_2,\Sigma_1}\times \chi_t(\Gamma,E_2)$ is equal to $1$.
    \item\label{gammacongruencec2} If $r_{E_1}^{\ddag}<r_{E_2}^{\ddag}$, then $\Phi_{E_1,\Sigma_1}\times \chi_t(\Gamma,E_1)$ is divisible by $p$.
\end{enumerate}
\end{Th}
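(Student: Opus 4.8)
The plan is to reduce both assertions, via Lemma~\ref{pretheoremlemma}, to the comparison of imprimitive Iwasawa invariants provided by Theorem~\ref{equalitymulambda}. Since $(E_1,E_2,\ddag)$ satisfies Hypothesis~\eqref{hypothesis2}, each of $(E_1,\ddag)$ and $(E_2,\ddag)$ satisfies Hypothesis~\eqref{hypothesis1}, so Lemma~\ref{pretheoremlemma} is available for each $E_i$ with the auxiliary set $\Sigma_1$ (which by Definition~\ref{sigmazerodef} consists of primes $v\nmid p$). I will also use that $\mu_{E_i}^{\ddag}=\mu_{E_i}^{\Sigma_1,\ddag}$, as recorded just before Lemma~\ref{pretheoremlemma}.

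For part~\eqref{gammacongruencec1}, put $r:=r_{E_1}^{\ddag}=r_{E_2}^{\ddag}$. Lemma~\ref{pretheoremlemma} gives, for $i=1,2$, that $\Phi_{E_i,\Sigma_1}\times\chi_t(\Gamma,E_i)=1$ if and only if $\mu_{E_i}^{\Sigma_1,\ddag}=0$ and $\lambda_{E_i}^{\Sigma_1,\ddag}=r$. By Theorem~\ref{equalitymulambda} the vanishing of $\mu_{E_1}^{\Sigma_1,\ddag}$ is equivalent to that of $\mu_{E_2}^{\Sigma_1,\ddag}$, and when these invariants vanish one has $\lambda_{E_1}^{\Sigma_1,\ddag}=\lambda_{E_2}^{\Sigma_1,\ddag}$. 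Hence the condition for $i=1$ holds precisely when the condition for $i=2$ holds, which is exactly the asserted equivalence.

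For part~\eqref{gammacongruencec2}, I would argue by contradiction. By Lemma~\ref{Lemma53} the quantity $\Phi_{E_1,\Sigma_1}\times\chi_t(\Gamma,E_1)$ equals $|g_{E_1}^{\Sigma_1,\ddag}(0)|_p^{-1}$; since $g_{E_1}^{\Sigma_1,\ddag}(0)\in\Z_p$, this is either $0$ or a non-negative power of $p$, so if it is not divisible by $p$ it must equal $1$. Then Lemma~\ref{pretheoremlemma} forces $\mu_{E_1}^{\Sigma_1,\ddag}=0$ and $\lambda_{E_1}^{\Sigma_1,\ddag}=r_{E_1}^{\ddag}$, whence Theorem~\ref{equalitymulambda} gives $\mu_{E_2}^{\Sigma_1,\ddag}=0$ and $\lambda_{E_2}^{\Sigma_1,\ddag}=\lambda_{E_1}^{\Sigma_1,\ddag}=r_{E_1}^{\ddag}$. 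On the other hand $f_{E_2}^{\Sigma_1,\ddag}(T)=T^{r_{E_2}^{\ddag}}g_{E_2}^{\Sigma_1,\ddag}(T)$ with $g_{E_2}^{\Sigma_1,\ddag}$ a polynomial, so $\lambda_{E_2}^{\Sigma_1,\ddag}=\deg f_{E_2}^{\Sigma_1,\ddag}\geq r_{E_2}^{\ddag}$. Combining, $r_{E_1}^{\ddag}=\lambda_{E_2}^{\Sigma_1,\ddag}\geq r_{E_2}^{\ddag}>r_{E_1}^{\ddag}$, a contradiction; therefore $\Phi_{E_1,\Sigma_1}\times\chi_t(\Gamma,E_1)$ is divisible by $p$.

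Both parts are purely formal consequences of Lemmas~\ref{Lemma53} and~\ref{pretheoremlemma} together with Theorem~\ref{equalitymulambda}, so I do not expect any serious analytic or cohomological obstacle at this stage; the substantive work has been done in those results. The only points that need a little care are the bookkeeping between primitive and imprimitive invariants (the $\mu$-invariants agree, but it is the \emph{imprimitive} $\lambda_{E_i}^{\Sigma_1,\ddag}$ that enters Lemma~\ref{pretheoremlemma}, and Theorem~\ref{equalitymulambda} compares exactly these for the common auxiliary set $\Sigma_1$), and the step ``not divisible by $p\Rightarrow$ equal to $1$'' in part~\eqref{gammacongruencec2}, which relies on the integrality of $g_{E_1}^{\Sigma_1,\ddag}(0)$ and the convention that $|g_{E_1}^{\Sigma_1,\ddag}(0)|_p^{-1}=0$ when this constant term vanishes.
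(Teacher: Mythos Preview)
Your proof is correct and follows essentially the same route as the paper's own argument: both parts are reduced via Lemma~\ref{pretheoremlemma} to statements about the imprimitive $\mu$- and $\lambda$-invariants, which are then compared using Theorem~\ref{equalitymulambda}. Your treatment is in fact slightly more careful than the paper's in that you explicitly justify, via Lemma~\ref{Lemma53} and the integrality of $g_{E_1}^{\Sigma_1,\ddag}(0)$, why ``not divisible by $p$'' forces the product to equal $1$ in part~\eqref{gammacongruencec2}.
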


\begin{proof}
\par We first assume consider the case when $r_{E_1}^{\ddag}$ is equal to $r_{E_2}^{\ddag}$.
It follows from Lemma $\ref{pretheoremlemma}$ that $\Phi_{E_i,\Sigma_1}\times  \chi_t(\Gamma, E_i)=1$ if and only if $\mu_{E_i}^{\ddag}=0$ and $\lambda_{E_i}^{\Sigma_1,\ddag}=r_{E_i}^{\ddag}$. Suppose that $\Phi_{E_1,\Sigma_1}\times  \chi_t(\Gamma, E_1)=1$, then it follows that $\mu_{E_1}^{\ddag}=0$ and $\lambda_{E_1}^{\Sigma_1,\ddag}=r_{E_1}^{\ddag}$. It follows from Corollary $\ref{equalitymulambda}$ that $\mu_{E_2}^{\ddag}=0$ and $\lambda_{E_2}^{\Sigma_1,\ddag}=r_{E_2}^{\ddag}$. Thus, by Lemma $\ref{pretheoremlemma}$, we have that $\Phi_{E_2,\Sigma_1}\times  \chi_t(\Gamma, E_2)=1$. This proves part $\eqref{gammacongruencec1}$.
\par Next, it is assumed that $r_{E_1}^{\ddag}<r_{E_2}^{\ddag}$ and it shown that $p$ divides $\Phi_{E_1,\Sigma_1}\times \chi_t(\Gamma,E_1)$. Suppose by way of contradiction that $p$ does not divide $\Phi_{E_1,\Sigma_1}\times \chi_t(\Gamma,E_1)$. This means that $\Phi_{E_1,\Sigma_1}\times \chi_t(\Gamma,E_1)$ is equal to $1$. Lemma $\ref{pretheoremlemma}$ asserts that $\mu_{E_1}^{\ddag}=0$ and $\lambda_{E_1}^{\Sigma_1,\ddag}=r_{E_1}^{\ddag}$. It follows from Corollary $\ref{equalitymulambda}$ that $\mu_{E_2}^{\ddag}=0$ and $\lambda_{E_2}^{\Sigma_1,\ddag}=r_{E_1}^{\ddag}$. Recall that $\lambda_{E_2}^{\Sigma_1,\ddag}$ is the degree of the characteristic polynomial $f_{E_2}^{\ddag}(T)$. Since $r_{E_2}^{\ddag}$ is the order of vanishing of $f_{E_2}^{\ddag}(T)$ at $T=0$, it follows that $\lambda_{E_2}^{\Sigma_1,\ddag}\geq r_{E_2}^{\ddag}$. This is a contradiction, as it is assumed that $r_{E_1}^{\ddag}<r_{E_2}^{\ddag}$. Hence, $p$ divides $\Phi_{E_1,\Sigma_1}\times \chi_t(\Gamma,E_1)$. This proves part $\eqref{gammacongruencec2}$.
\end{proof}
\par Consider the case when $r_{E_1}^{\ddag}=r_{E_2}^{\ddag}$. It is natural to ask if $\chi_t(\Gamma,E_1)=1$ implies that $\chi_i(\Gamma, E_2)=1$? This statement is false, as is shown in \cite[Example 5.2]{raysujatha}, where it is shown that there are $5$-congruent elliptic curves $E_1$ and $E_2$ over $\Q$ which both have Mordell-Weil rank $1$ and good ordinary reduction at $5$, such that $\chi_t(\Gamma, E_1)=1$ and $\chi_t(\Gamma, E_2)=5^2$. Therefore it is necessary to account the factors $\Phi_{E_i, \Sigma_1}$, i.e. \[\Phi_{E_1, \Sigma_1}\times \chi_t(\Gamma, E_1)=1\Leftrightarrow\Phi_{E_2, \Sigma_1}\times \chi_t(\Gamma, E_2)=1.\]
\section{Results over $\Z_p^m$-extensions}
\par We show that our results on truncated Euler characteristics generalize to $\Z_p^m$-extensions. The multi-signed Selmer groups have not been defined for more general $p$-adic Lie extensions. In the good ordinary reduction setting, the reader is referred to \cite{Zerbes} for a discussion on truncated Euler characteristics over general $p$-adic Lie extensions. In \cite[ section 4]{raysujatha}, congruence relations are proved for truncated Euler characteristics over $p$-adic Lie extensions for elliptic curves with good ordinary reduction at $p$.
\par Let $\mathcal{F}_{\infty}/F$ be a Galois extension of $F$ containing $F^{\op{cyc}}$ such that $\op{Gal}(\mathcal{F}_{\infty}/F)\simeq \Z_p^{m}$ for an integer $m\geq 1$. Set $\op{G}:=\op{Gal}(\mathcal{F}_{\infty}/F)$, $H:=\op{Gal}(\mathcal{F}_{\infty}/F^{\op{cyc}})$ and identify $\Gamma$ with $\op{Gal}(F^{\op{cyc}}/F)\simeq \op{G}/\op{H}$. Let $E$ be an elliptic curve and $\ddag$ be a signed vector, assume that $(E,\ddag)$ satisfies Hypothesis $\eqref{hypothesis1}$ and assume that Hypothesis $\eqref{Finfhyp}$ is satisfied for the pair $(E, \mathcal{F}_{\infty})$. Another natural hypothesis is that the Selmer groups be in $\mathfrak{M}_H(\op{G})$. The precise definition is given below.
\begin{Def}
Let $M$ be a cofinitely generated cotorsion $\Z_p[[\op{G}]]$-module and let $M(p)$ be the $p$-primary submodule of $M$. The category $\mathfrak{M}_H(\op{G})$ consists of all such modules $M$ such that $M/M(p)$ is a finitely generated $\Z_p[[H]]$-module.
\end{Def} We introduce the following hypothesis.

\begin{hyp}\label{MHG}
Assume that the Selmer group $\op{Sel}^{\ddag}(E/\mathcal{F}_{\infty})$ is in $\mathfrak{M}_H(\op{G})$.
\end{hyp}
We recall the notion of the truncated $\op{G}$-Euler characteristic of $\op{Sel}^{\ddag}(E/\mathcal{F}_{\infty})$. 
\begin{Def}Let $M$ be a discrete $p$-primary $\op{G}$-module, define 
\[\psi_D: H^0(\Gamma,M^H)\rightarrow H^1(\op{G},M)\] as the composite of the natural map $\phi_{D^H}:H^0(\Gamma, M^H)\rightarrow H^1(\Gamma, M^H)$ with the inflation map $H^1(\Gamma, M^H)\rightarrow H^1(\op{G},M)$. The module $M$ has finite truncated $\op{G}$-Euler characteristic if the following two conditions are satisfied:
\begin{enumerate}
    \item both $\operatorname{ker}(\psi_M)$ and $\operatorname{cok}(\psi_M)$ are finite,
    \item $H^i(\op{G}, M)$ is finite for $i\geq 2$.
\end{enumerate} The truncated $\op{G}$-Euler characteristic is then defined by
\[\chi_t(\op{G},M):=\frac{\# \operatorname{ker}(\psi_M)}{\# \operatorname{cok}(\psi_M)}\times \prod_{i\geq 2}^d \# \left(H^i(\op{G},M)\right)^{(-1)^i}.\]
\end{Def}
Next, we recall the notion of the Akashi series. For a cofinitely generated $\Gamma$-module $M$, let $\op{char}_{\Gamma}(M)$ be the characteristic polynomial of its Pontryagin dual $M^{\vee}$. This is the unique polynomial generating the characteristic ideal which can be expressed as the product of a power of $p$ and a distinguished polynomial.
\begin{Def}Let $D$ be a discrete $p$-primary $\op{G}$-module. The Akashi series of $D$ is defined if $H^i(H, D)$ is cotorsion and cofinitely generated for all $i\geq 0$. In this case, the Akashi series $\op{Ak}_H(D)$ is taken to be the following alternating product:
\[\op{Ak}_H(D):=\prod_{i\geq 0}^{d-1} \left(\op{char}_{\Gamma} H^i(H, D)\right)^{(-1)^i}.\]
\end{Def}
Note that the Akashi series coincides with the characteristic polynomial when $\mathcal{F}_{\infty}=F^{\op{cyc}}$. The following Proposition describes the relationship between the Akashi series and truncated $\op{G}$-Euler characteristic.
\begin{Prop}\label{prop63}\cite[Proposition 2.10]{Zerbes}
Let $D$ be a discrete $p$-primary $\op{G}$-module such that the Pontryagin dual $D^{\vee}$ is in $\mathfrak{M}_H(\op{G})$. Then, the Akashi series $\op{Ak}_H(D)$ is defined. Furthermore, suppose that
$D$ has finite truncated $\op{G}$-Euler characteristic. Let $\beta T^k$ be the leading term of $\op{Ak}_H(D)$. Then, the following assertions hold:
\begin{enumerate}
    \item the order of vanishing $k$
is given by
\[k = \sum_{i\geq 0}
(-1)^i \op{corank}_{\Z_p} H^i(H,D)^{\Gamma}\]

\item the truncated $\op{G}$-Euler characteristic is given by \[\chi_t(\op{G},D) = |\beta|_p^{-1}.\]
\end{enumerate}
\end{Prop}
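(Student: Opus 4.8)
The plan is to run the Hochschild--Serre spectral sequence for $1\to H\to\op{G}\to\Gamma\to 1$, namely $H^i(\Gamma,H^j(H,D))\Rightarrow H^{i+j}(\op{G},D)$, and to exploit that $\Gamma\cong\Z_p$ has $p$-cohomological dimension one, so that the spectral sequence degenerates into short exact sequences
\[0\to H^1\!\big(\Gamma,H^{n-1}(H,D)\big)\to H^n(\op{G},D)\to H^0\!\big(\Gamma,H^n(H,D)\big)\to 0\]
for every $n\geq 1$. Put $N_q:=H^q(H,D)$; since $H\cong\Z_p^{m-1}$ one has $N_q=0$ for $q>m-1$. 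The first task is to show each $N_q$ is a cofinitely generated cotorsion $\Z_p[[\Gamma]]$-module, which is precisely where the assumption $D^{\vee}\in\mathfrak{M}_H(\op{G})$ is used: passing to Pontryagin duals, $N_q^{\vee}=H_q(H,D^{\vee})$, and since $\Z_p[[H]]\cong\Z_p[[S_1,\dots,S_{m-1}]]$ has finite global dimension these are computed by a Koszul-type resolution. The defining property of $\mathfrak{M}_H(\op{G})$ --- that $D^{\vee}/D^{\vee}(p)$ is finitely generated over $\Z_p[[H]]$ while $D^{\vee}(p)$ is a finitely generated $\Z_p[[\op{G}]]$-torsion module --- then forces each $H_q(H,D^{\vee})$ to be finitely generated over $\Z_p[[\Gamma]]$ and $\Z_p[[\Gamma]]$-torsion. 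In particular $\op{char}_\Gamma N_q$ is defined for each $q$, so $\op{Ak}_H(D)=\prod_{q\ge 0}\big(\op{char}_\Gamma N_q\big)^{(-1)^q}$ is defined; this is the first assertion, and it does not use the finiteness of the truncated $\op{G}$-Euler characteristic.

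For the numerical part, write $f_q(T):=\op{char}_\Gamma N_q=T^{r_q}g_q(T)$ with $g_q(0)\neq 0$, so $\op{Ak}_H(D)=T^{\sum_q(-1)^qr_q}\prod_q g_q(T)^{(-1)^q}$; thus $k=\sum_q(-1)^qr_q$ and $\beta=\prod_q g_q(0)^{(-1)^q}$. Next I would read off from the degenerate spectral sequence exactly which finiteness conditions on the $N_q$ are equivalent to $D$ having finite truncated $\op{G}$-Euler characteristic: the case $n=1$ shows that inflation $H^1(\Gamma,N_0)\hookrightarrow H^1(\op{G},D)$ is injective with cokernel $N_1^{\Gamma}$, so that $\ker\psi_D=\ker\phi_{N_0}$ and there is an exact sequence $0\to\op{cok}\phi_{N_0}\to\op{cok}\psi_D\to N_1^{\Gamma}\to 0$; for $i\ge 2$ one gets $0\to(N_{i-1})_\Gamma\to H^i(\op{G},D)\to N_i^{\Gamma}\to 0$. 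Hence finiteness of the truncated $\op{G}$-Euler characteristic is equivalent to: $\phi_{N_0}$ has finite kernel and cokernel, and $N_q^{\Gamma}$, $(N_q)_\Gamma$ are finite for all $q\ge 1$. Applying Lemma~\ref{TECbasiclemma} to each $N_q$ then gives $r_q=\op{corank}_{\Z_p}N_q^{\Gamma}$ and $\chi_t(\Gamma,N_q)=|g_q(0)|_p^{-1}$; the formula for $r_q$ yields $k=\sum_q(-1)^q\op{corank}_{\Z_p}H^q(H,D)^{\Gamma}$, which is assertion~(1).

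Assertion~(2) is then a bookkeeping computation with orders of finite groups in the exact sequences above. Combining $\ker\psi_D=\ker\phi_{N_0}$, the sequence for $\op{cok}\psi_D$, and the sequences for $H^i(\op{G},D)$ with $i\ge 2$, and using $\chi_t(\Gamma,N_q)=\#N_q^{\Gamma}/\#(N_q)_\Gamma$ for $q\ge 1$ (both groups finite), one finds
\[\chi_t(\op{G},D)=\frac{\#\ker\phi_{N_0}}{\#\op{cok}\phi_{N_0}}\cdot\prod_{q\ge 1}\Big(\frac{\#N_q^{\Gamma}}{\#(N_q)_\Gamma}\Big)^{(-1)^q}=\prod_{q\ge 0}\chi_t(\Gamma,N_q)^{(-1)^q},\]
and then $\chi_t(\Gamma,N_q)=|g_q(0)|_p^{-1}$ gives $\chi_t(\op{G},D)=\prod_q|g_q(0)|_p^{-(-1)^q}=|\beta|_p^{-1}$. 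I expect the main obstacle to be the first step: carefully verifying that the $\mathfrak{M}_H(\op{G})$-hypothesis forces all the $H^q(H,D)$ to be cofinitely generated $\Z_p[[\Gamma]]$-cotorsion modules, which requires homological algebra over $\Z_p[[H]]$ and care with the $p$-primary part; the spectral-sequence bookkeeping in the last two paragraphs is then routine.
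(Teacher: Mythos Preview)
The paper does not supply its own proof of this proposition; it is quoted verbatim from \cite[Proposition~2.10]{Zerbes}. Your argument via the degenerate Hochschild--Serre spectral sequence for $1\to H\to\op{G}\to\Gamma\to 1$, together with the reduction to Lemma~\ref{TECbasiclemma} applied to each $N_q=H^q(H,D)$, is correct and is precisely the standard proof (and the one given in \cite{Zerbes}).
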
The truncated $\op{G}$-Euler characteristic of $\op{Sel}^{\ddag}(E/\mathcal{F}_{\infty})$ is denoted $\chi_t^{\ddag}(\op{G}, E)$. When the truncated $\op{G}$-Euler characteristic is defined, it is related to the Akashi series of $\op{Sel}^{\ddag}(E/\mathcal{F}_{\infty})$, which we denote by $\op{Ak}_H^{\ddag}(E)$. The following Theorem due to Lei and Lim describes the link between Akashi series $\op{Ak}_H^{\ddag}(E)$ and the characteristic polynomial $f_{E}^{\ddag}(T)$ of the Selmer group $\op{Sel}^{\ddag}(E/F^{\op{cyc}})$.
\begin{Th}\cite[Theorem 1.1]{leilim2}\label{leilimthm} Let $E$ be an elliptic curve over a number field $F$, and let $\ddag$ be a signed vector. Let $\mathcal{F}_{\infty}$ be a $\Z_p^m$-extension of $F$ which contains $F^{\op{cyc}}$. Assume that:
\begin{enumerate}
    \item Hypotheses $\eqref{hypothesis}, \eqref{Finfhyp}$ and $\eqref{MHG}$ are satisfied.
    \item The Selmer group $\op{Sel}^{\ddag}(E/F^{\op{cyc}})$ is $\Z_p[[\Gamma]]$-cotorsion.
\end{enumerate} Then, the Akashi
series $\op{Ak}_H^{\ddag}(E)$ is well-defined and is given by
\[\op{Ak}_H^{\ddag}(E)=T^{\gamma_E}\cdot f_E^{\ddag}(T),
\]
where $\gamma_E$ is the number of primes of $F^{\op{cyc}}$ above $p$ with nontrivial decomposition group in $\mathcal{F}_{\infty}/F^{\op{cyc}}$ and
at which $E$ has split multiplicative reduction.
\end{Th}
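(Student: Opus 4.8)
The plan is to compute $\op{Ak}_H^{\ddag}(E)$ by combining the multiplicativity of the Akashi series in short exact sequences with the defining sequence of the multi-signed Selmer group over $\mathcal{F}_{\infty}$, and then to match the resulting factorisation term-by-term against the analogous one at the cyclotomic level, where $H=1$ and the Akashi series degenerates to the characteristic polynomial $f_E^{\ddag}(T)$.

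First I would establish the $\mathcal{F}_{\infty}$-analogue of Proposition \ref{restrictionmapsurj}: under Hypotheses \eqref{hypothesis}, \eqref{Finfhyp}, \eqref{MHG} together with the $\Z_p[[\Gamma]]$-cotorsionness of $\op{Sel}^{\ddag}(E/F^{\op{cyc}})$, the global-to-local defining map for $\op{Sel}^{\ddag}(E/\mathcal{F}_{\infty})$ is surjective. This follows from a Cassels--Poitou--Tate argument, using $H^2(F_{\Sigma}/\mathcal{F}_{\infty}, E[p^{\infty}])=0$ (weak Leopoldt, valid because $\mathcal{F}_{\infty}\supseteq F^{\op{cyc}}$) and $H^0(F_{\Sigma}/\mathcal{F}_{\infty}, E[p^{\infty}])=0$ (irreducibility of $E[p]$). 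One thereby obtains a short exact sequence of $\Z_p[[\op{G}]]$-modules
\[0\rightarrow \op{Sel}^{\ddag}(E/\mathcal{F}_{\infty})\rightarrow H^1(F_{\Sigma}/\mathcal{F}_{\infty}, E[p^{\infty}])\rightarrow \prod_{v\in \Sigma\setminus \Sigma_{\op{ss}}}\mathcal{H}_v(\mathcal{F}_{\infty}, E[p^{\infty}])\times \prod_{i=1}^d \mathcal{H}_{\mathfrak{p}_i}^{\ddag_i}(\mathcal{F}_{\infty}, E[p^{\infty}])\rightarrow 0.\]
After checking that each module here lies in $\mathfrak{M}_H(\op{G})$ and has $H^i(H,-)$ cotorsion and cofinitely generated over $\Z_p[[\Gamma]]$ for all $i$, multiplicativity of $\op{Ak}_H$ on such exact sequences yields
\[\op{Ak}_H^{\ddag}(E)=\op{Ak}_H\!\big(H^1(F_{\Sigma}/\mathcal{F}_{\infty}, E[p^{\infty}])\big)\cdot \prod_{v\in \Sigma\setminus \Sigma_{\op{ss}}}\op{Ak}_H\!\big(\mathcal{H}_v(\mathcal{F}_{\infty}, E[p^{\infty}])\big)^{-1}\cdot \prod_{i=1}^d \op{Ak}_H\!\big(\mathcal{H}_{\mathfrak{p}_i}^{\ddag_i}(\mathcal{F}_{\infty}, E[p^{\infty}])\big)^{-1}.\]

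Next I would run the identical factorisation at the cyclotomic level, where $\op{Ak}_H=\op{char}_{\Gamma}$, so that $f_E^{\ddag}(T)$ is the corresponding product, and then compare factor-by-factor. The global terms match: by Shapiro's lemma, Jannsen's spectral sequence, and the vanishing of $H^0$ and $H^2$ over $\mathcal{F}_{\infty}$ noted above, $\op{Ak}_H(H^1(F_{\Sigma}/\mathcal{F}_{\infty}, E[p^{\infty}]))$ agrees with $\op{char}_{\Gamma}(H^1(F_{\Sigma}/F^{\op{cyc}}, E[p^{\infty}]))$ up to a unit in $\Z_p[[\Gamma]]$. For a prime $v\nmid p$, Shapiro's lemma identifies $\mathcal{H}_v(\mathcal{F}_{\infty}, -)$ with a module induced from the decomposition group of $\op{G}$ at $v$, whose Akashi series cancels the cyclotomic contribution. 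For a supersingular prime $\mathfrak{p}_i$, the Pontryagin dual of $\mathcal{H}_{\mathfrak{p}_i}^{\ddag_i}(\mathcal{F}_{\infty}, -)$ is free of rank one over the Iwasawa algebra of the relevant decomposition subgroup --- this is the $\Z_p^m$-upgrade, made possible by condition \eqref{twohyp2}, of the local computation in the proof of \cite[Proposition 2.11]{BDKim} --- and the same holds cyclotomically, so these factors cancel too. The only surviving discrepancy arises from primes $v\mid p$ at which $E$ has good ordinary or multiplicative reduction and whose decomposition group $H_w\subseteq H$ at a prime $w\mid v$ of $F^{\op{cyc}}$ is nontrivial, where the local condition is built from $H^1(\op{I}_{\eta}, D_v)$ with $D_v$ as in \eqref{Ddef}. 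A direct inflation--restriction and Shapiro computation shows that when $D_v$ is a nontrivial unramified twist of $\Q_p/\Z_p$ (good ordinary or non-split multiplicative reduction) one has $H^0(H_w, D_v)=0$, so no factor of $T$ is introduced, whereas when $D_v\simeq \Q_p/\Z_p$ with trivial action (split multiplicative reduction) one gets $H^0(H_w, D_v)\simeq \Q_p/\Z_p$ with trivial $\Gamma$-action, hence an extra factor of $\op{char}_{\Gamma}=T$. Collecting these contributions over all such $v$ produces exactly $T^{\gamma_E}$, and the theorem follows.

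The step I expect to be the main obstacle is the local analysis at the primes above $p$. One must verify carefully that the signed local conditions at supersingular primes contribute trivially to the Akashi-series discrepancy --- this is precisely the structural reason the multi-signed Selmer groups are well behaved, and it rests on Kim's freeness computation extended to the $\Z_p^m$-setting through Hypothesis \eqref{Finfhyp} --- and one must pin down the exact power of $T$ at the ordinary and multiplicative primes by a meticulous description of $H^{\ast}(H_w, D_v)$ as $\Z_p[[\Gamma]]$-modules. By contrast, the global comparison, although it invokes weak Leopoldt and a spectral-sequence argument, is comparatively routine once the correct surjectivity statement is in hand.
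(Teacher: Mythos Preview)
The paper does not give its own proof of this theorem: it is stated as a citation of \cite[Theorem~1.1]{leilim2} and used as a black box, with no argument supplied. There is therefore nothing in the present paper to compare your proposal against.

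That said, your sketch is a faithful outline of the strategy in the cited reference: establish surjectivity of the global-to-local map over $\mathcal{F}_{\infty}$, use multiplicativity of the Akashi series on the resulting short exact sequence, and then compare each factor with its cyclotomic counterpart, the only nontrivial discrepancy arising from primes above $p$ of split multiplicative reduction with nontrivial decomposition in $\mathcal{F}_{\infty}/F^{\op{cyc}}$. Your identification of the local analysis at primes above $p$ as the crux is accurate.
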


\begin{Cor}\label{lastcor}
 Let $E$ be an elliptic curve over a number field $F$, and let $\ddag$ be a signed vector. Let $\mathcal{F}_{\infty}$ be a $\Z_p^m$-extension of $F$ which contains $F^{\op{cyc}}$. Assume that
 \begin{enumerate}
     \item Hypotheses $\eqref{hypothesis}$, $ \eqref{Finfhyp}$, $\eqref{hypothesis1}$ and $\eqref{MHG}$ are satisfied.
     \item The truncated $\op{G}$-Euler characteristic $\chi_t^{\ddag}(\op{G}, E)$ is well defined.
 \end{enumerate} Then, the truncated $\op{G}$-Euler characteristic $\chi_t(\op{G}, E)$ is equal to the truncated $\Gamma$-Euler characteristic $\chi_t(\Gamma, E)$.
\end{Cor}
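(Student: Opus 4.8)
The plan is to show that $\chi_t^{\ddag}(\op{G}, E)$ and $\chi_t^{\ddag}(\Gamma, E)$ are both equal to the single quantity $|g_E^{\ddag}(0)|_p^{-1}$, where $f_E^{\ddag}(T)=T^{r_E^{\ddag}}g_E^{\ddag}(T)$ with $g_E^{\ddag}(0)\neq 0$ as above. For the cyclotomic side this is already recorded: Hypothesis $\eqref{hypothesis1}$ supplies that $\op{Sel}^{\ddag}(E/F^{\op{cyc}})$ is $\Z_p[[\Gamma]]$-cotorsion and that $\phi_E^{\ddag}$ has finite kernel and cokernel, so Lemma $\ref{TECbasiclemma}$, applied to $M=\op{Sel}^{\ddag}(E/F^{\op{cyc}})$ (whose characteristic polynomial is $f_E^{\ddag}(T)$), gives $\chi_t^{\ddag}(\Gamma,E)=|g_E^{\ddag}(0)|_p^{-1}$.

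For the $\op{G}$-side, first I would feed Hypotheses $\eqref{hypothesis}$, $\eqref{Finfhyp}$, $\eqref{MHG}$ together with the $\Z_p[[\Gamma]]$-cotorsionness coming from Hypothesis $\eqref{hypothesis1}$ into Theorem $\ref{leilimthm}$, which shows that the Akashi series is defined and satisfies
\[
\op{Ak}_H^{\ddag}(E)=T^{\gamma_E}\cdot f_E^{\ddag}(T)=T^{\gamma_E+r_E^{\ddag}}\,g_E^{\ddag}(T).
\]
Since $\gamma_E\geq 0$ and $g_E^{\ddag}(0)\neq 0$, the leading term of $\op{Ak}_H^{\ddag}(E)$, in the sense of Proposition $\ref{prop63}$, is $\beta T^k$ with $k=\gamma_E+r_E^{\ddag}$ and $\beta=g_E^{\ddag}(0)$. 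Next I would invoke Proposition $\ref{prop63}$: Hypothesis $\eqref{MHG}$ places the Pontryagin dual of $\op{Sel}^{\ddag}(E/\mathcal{F}_{\infty})$ in $\mathfrak{M}_H(\op{G})$, and by hypothesis this module has finite truncated $\op{G}$-Euler characteristic, so part $(2)$ of that Proposition gives $\chi_t^{\ddag}(\op{G},E)=|\beta|_p^{-1}=|g_E^{\ddag}(0)|_p^{-1}$. Comparing the two computations yields $\chi_t^{\ddag}(\op{G},E)=\chi_t^{\ddag}(\Gamma,E)$.

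The proof is thus a direct concatenation of the Lei--Lim formula for the Akashi series with Zerbes' Akashi-series criterion, together with Lemma $\ref{TECbasiclemma}$, so the only delicate point is the identification $\beta=g_E^{\ddag}(0)$: multiplying $f_E^{\ddag}(T)$ by the monomial $T^{\gamma_E}$ merely shifts the order of vanishing at $T=0$ and leaves the first non-zero coefficient unchanged, so the split-multiplicative correction $\gamma_E$ does not enter the final comparison. The remaining effort is the routine verification that the hypotheses of Theorem $\ref{leilimthm}$ and Proposition $\ref{prop63}$ are all implied by Hypotheses $\eqref{hypothesis}$, $\eqref{Finfhyp}$, $\eqref{hypothesis1}$, $\eqref{MHG}$ and the assumed well-definedness of $\chi_t^{\ddag}(\op{G},E)$.
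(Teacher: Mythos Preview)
Your proposal is correct and follows precisely the approach the paper itself uses: the paper's proof simply states that the result is a direct consequence of Theorem~\ref{leilimthm}, Proposition~\ref{prop63}, and Lemma~\ref{TECbasiclemma}, and you have accurately unpacked how these three ingredients combine. Your observation that multiplying by $T^{\gamma_E}$ only shifts the order of vanishing and leaves the leading coefficient $g_E^{\ddag}(0)$ unchanged is exactly the point that makes the comparison go through.
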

\begin{proof}
The result is a direct consequence of Theorem $\ref{leilimthm}$, Proposition $\ref{prop63}$ and Lemma $\ref{TECbasiclemma}$.
\end{proof}
\begin{Th}\label{Gcongruence}
Let $E_1$ and $E_2$ be $p$-congruent elliptic curves over a number field $F$. Let $\ddag$ be a signed vector. Assume that
\begin{enumerate}
     \item Hypotheses $ \eqref{Finfhyp}$, $\eqref{hypothesis2}$ and $\eqref{MHG}$ are satisfied.
     \item The truncated $\op{G}$-Euler characteristic $\chi_t^{\ddag}(\op{G}, E)$ is well defined.
 \end{enumerate}
Then, the following assertions hold.
\begin{enumerate}
    \item Suppose that $r^{\ddag}_{E_1}=r^{\ddag}_{E_2}$. Then, we have that the truncated $\op{G}$-Euler characteristics are defined, then $\Phi_{E_1,\Sigma_1}\times \chi_t(\op{G},E_1)$ is equal to $1$ if and only if $\Phi_{E_2,\Sigma_1}\times \chi_t(\op{G}, E_2)$ is equal to $1$. Furthermore, $\Phi_{E_1,\Sigma_1}\times \op{Ak}_H(E_1)$ is equal to $T^{r_{E_1}+\gamma_{E_1}}$ if and only if $\Phi_{E_2,\Sigma_1}\times \op{Ak}_H(E_2)$ is equal to $T^{r_{E_1}+\gamma_{E_2}}$.
    \item Suppose that $r^{\ddag}_{E_1}<r^{\ddag}_{E_2}$. Then, $p$ divides $\Phi_{E_1,\Sigma_1}\times \chi_t(\op{G},E_1)$.
\end{enumerate}
\end{Th}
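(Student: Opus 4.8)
The plan is to push everything down to the cyclotomic level, where Theorem~\ref{gammacongruence} already establishes the congruence, and then to repackage the resulting statements about truncated $\Gamma$-Euler characteristics as statements about Akashi series using the Lei--Lim formula of Theorem~\ref{leilimthm}.

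First I would observe that the hypotheses in force here are exactly what is needed to invoke Corollary~\ref{lastcor} for each of $E_1$ and $E_2$: Hypothesis~\eqref{hypothesis2} contains Hypothesis~\eqref{hypothesis1} (hence Hypothesis~\eqref{hypothesis}) for both $(E_1,\ddag)$ and $(E_2,\ddag)$, Hypothesis~\eqref{Finfhyp} and Hypothesis~\eqref{MHG} are assumed, and the truncated $\op{G}$-Euler characteristics are assumed defined. Thus $\chi_t^{\ddag}(\op{G},E_i)=\chi_t^{\ddag}(\Gamma,E_i)$ for $i=1,2$, so $\Phi_{E_i,\Sigma_1}\times\chi_t(\op{G},E_i)=\Phi_{E_i,\Sigma_1}\times\chi_t(\Gamma,E_i)$. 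The first assertion of part~(1) and all of part~(2) then follow verbatim from parts~\eqref{gammacongruencec1} and~\eqref{gammacongruencec2} of Theorem~\ref{gammacongruence} (which requires only Hypothesis~\eqref{hypothesis2}).

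For the Akashi series assertion in part~(1) I would first fix what the symbol $\Phi_{E_i,\Sigma_1}\times\op{Ak}_H(E_i)$ denotes, namely the $\Sigma_1$-imprimitive Akashi series obtained by folding the Euler factors $h_{E_i}^{(v)}(T)$ ($v\in\Sigma_1$) into $\op{Ak}_H^{\ddag}(E_i)$; by Lemma~\ref{Lemma53} the constant terms of these factors recover $\Phi_{E_i,\Sigma_1}$ up to $\Z_p$-units. By Theorem~\ref{leilimthm}, $\op{Ak}_H^{\ddag}(E_i)=T^{\gamma_{E_i}}f_{E_i}^{\ddag}(T)$, so by the factorization~\eqref{imprimitivetoprimitive} the imprimitive version equals $T^{\gamma_{E_i}}f_{E_i}^{\Sigma_1,\ddag}(T)=T^{\gamma_{E_i}+r_{E_i}^{\ddag}}g_{E_i}^{\Sigma_1,\ddag}(T)$. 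Since each $h_{E_i}^{(v)}(T)$ has vanishing $\mu$-invariant and $\mathcal P_v(E_i,T)$ is monic up to a $\Z_p$-unit, this expression equals $T^{\gamma_{E_i}+r_{E_i}^{\ddag}}$ up to a unit in $\Z_p[[\Gamma]]$ exactly when $g_{E_i}^{\Sigma_1,\ddag}(T)$ is a unit, i.e.\ when $\mu_{E_i}^{\ddag}=0$ and $\lambda_{E_i}^{\Sigma_1,\ddag}=r_{E_i}^{\ddag}$; by Lemma~\ref{pretheoremlemma} this is precisely the condition $\Phi_{E_i,\Sigma_1}\times\chi_t(\Gamma,E_i)=1$. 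Hence the equivalence
\[
\Phi_{E_1,\Sigma_1}\times\op{Ak}_H(E_1)=T^{r_{E_1}^{\ddag}+\gamma_{E_1}}\Longleftrightarrow \Phi_{E_2,\Sigma_1}\times\op{Ak}_H(E_2)=T^{r_{E_1}^{\ddag}+\gamma_{E_2}}
\]
is the same as $\Phi_{E_1,\Sigma_1}\times\chi_t(\Gamma,E_1)=1\Leftrightarrow\Phi_{E_2,\Sigma_1}\times\chi_t(\Gamma,E_2)=1$, already proved; the standing assumption $r_{E_1}^{\ddag}=r_{E_2}^{\ddag}$ ensures the $T$-power on each side is $T^{r_{E_1}^{\ddag}+\gamma_{E_i}}$, the discrepancy between the two exponents coming only from $\gamma_{E_1}$ versus $\gamma_{E_2}$, which count split multiplicative primes above $p$ of $E_1$ and $E_2$ separately and need not agree even for congruent curves.

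The main obstacle I anticipate is not conceptual but the identification in the previous paragraph: one must verify that the $T^{\gamma_E}$-shift produced by Theorem~\ref{leilimthm} is unchanged on passing to the $\Sigma_1$-imprimitive Selmer group — this is because $\Sigma_1$ contains no prime above $p$ while $\gamma_E$ is counted among the primes above $p$ — and that folding the Euler factors $h_{E_i}^{(v)}(T)$ into the Akashi series is compatible with the short exact sequence relating primitive and $\Sigma_0$-imprimitive Selmer groups in the $\mathfrak M_H(\op{G})$ setting. Once this is pinned down, the proof is purely formal, combining Corollary~\ref{lastcor}, Theorem~\ref{leilimthm}, Lemma~\ref{pretheoremlemma}, Theorem~\ref{equalitymulambda}, and Theorem~\ref{gammacongruence}.
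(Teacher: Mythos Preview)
Your proposal is correct and follows essentially the same route as the paper, which simply records that the result is a direct consequence of Theorem~\ref{gammacongruence}, Proposition~\ref{prop63}, and Corollary~\ref{lastcor}. You give considerably more detail than the paper does, particularly in unpacking the Akashi series assertion via Theorem~\ref{leilimthm} and Lemma~\ref{pretheoremlemma}; this is a welcome elaboration, since the paper leaves the meaning of ``$\Phi_{E_i,\Sigma_1}\times\op{Ak}_H(E_i)=T^{r_{E_1}+\gamma_{E_i}}$'' (a scalar times a power series equalling a monomial) unexplained, and your reading of it as an equality up to a unit in $\Z_p[[\Gamma]]$, equivalent to $\mu_{E_i}^{\ddag}=0$ and $\lambda_{E_i}^{\Sigma_1,\ddag}=r_{E_i}^{\ddag}$, is the natural one.
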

\begin{proof}The result is a direct consequence of Theorem $\ref{gammacongruence}$, Proposition $\ref{prop63}$ and Corollary $\ref{lastcor}$. 
\end{proof}
\section{Examples}\label{examples}
\par We discuss two concrete examples which illustrate our results for $p=5$. Our computations are aided by Sage. The reader is referred to \cite[section 5]{raysujatha} for examples when both elliptic curves are defined over $\Q$ and have good ordinary reduction at $p=5$.
\subsection{Example 1} Consider elliptic curves $E_1=66a1$ and $E_2=462d1$. The elliptic curves $E_1$ and $E_2$ are $5$-congruent and supersingular at $5$. Both elliptic curves $E_1$ and $E_2$ have Mordell-Weil rank zero, hence the Euler characteristic $\chi(\Gamma, E_i)$ is well defined for $i=1,2$. Hypothesis $\eqref{hypothesis}$ is satisfied for the elliptic curves $E_1$ and $E_2$. Let $\ddag\in \{+,-\}$ be a choice of sign. Assume that for $i=1,2$, the Selmer group $\op{Sel}^{\ddag}(E_i/\Q^{\op{cyc}})$ is $\Z_p[[\Gamma]]$-cotorsion. Therefore, Hypothesis $\eqref{hypothesis2}$ is satisfied. We show that $\Phi^{\ddag}_{E_i, \Sigma_1}=1$ and $\chi_t^{\ddag}(\Gamma, E_i)=1$ for $i=1,2$. This demonstrates Theorem $\ref{gammacongruence}$ which asserts that 
\[\Phi^{\ddag}_{E_1, \Sigma_1}\times \chi_t^{\ddag}(\Gamma, E_1)=1\text{ if and only if }\Phi^{\ddag}_{E_2, \Sigma_1}\times \chi_t^{\ddag}(\Gamma, E_2)=1.\]The conductor $N_1$ of $E_1$ (resp. $N_2$ of $E_2$) is $66=2\times 3\times 11$ (resp. $462=2\times 3\times 7\times 11$). The set $\Sigma_0$ consists of the primes $v\neq 5$ at which $E_1$ or $E_2$ has bad reduction. We have that $\Sigma_0=\{2,3,7,11\}$. We calculate the optimal set of primes $\Sigma_1\subseteq \Sigma_0$. Let $\closure{N}$ denote the prime to $5$ Artin conductor of the residual representation. Recall that $\Sigma_1$ is the set of primes $\Sigma(E_1)\cup \Sigma(E_2)$, where $\Sigma(E_i)$ consists of the primes $v\neq 5$ such that $v|(N_i/\closure{N})$, and if $\mu_5$ is contained in $\Q_v$, then $E$ has split multiplicative reduction at $v$. Note that $\Q_{11}$ contains $\mu_{5}$ and both elliptic curves $E_1$ and $E_2$ have non-split multiplicative reduction at $11$. Hence, $\Sigma_1\subseteq \{2,3,7\}$ is strictly smaller than $\Sigma_0=\{2,3,7,11\}$. Recall that $\Phi_{E,\Sigma_1}$ is the product $\prod_{v\in \Sigma_1} |L_v(E, 1)|_5$. From the discussion in \cite[ p.7]{raysujatha}, $L_v(E_i,1)^{-1}$ is equal to $v^{-1}(l+\beta_v(E_i)-a_l(E))$, where $\beta_v(E_i)$ is $1$ when $E_i$ has good reduction at $v$ and is $0$ otherwise.
These values are calculated from the following:
\[
\begin{split}
    &2+\beta_2(E_1)-a_2(E_1)=3, 3+\beta_3(E_1)-a_3(E_1)=2, 7+\beta_7(E_1)-a_{7}(E_1)=6,\\
     &2+\beta_2(E_2)-a_2(E_2)=3, 3+\beta_3(E_2)-a_3(E_2)=3, 7+\beta_7(E_2)-a_{7}(E_2)=8.\\
\end{split}\]
None of the above values are not divisible by $5$ and hence, $\Phi_{E_1,\Sigma_1}$ and $\Phi_{E_2, \Sigma_1}$ are both $1$. The Euler characteristics $\chi^{\pm}(\Gamma, E_i)$ may be calculated explicitly. The Euler characteristic $\chi(\Gamma, E_i)$ is given up to $5$-adic unit by
\[\chi^{\pm}(\Gamma, E_i)\sim \# \op{Sel}(E/\Q) \times \prod c_v(E_i),\]see \cite[Theorem 1.2]{BDKaust}. For both elliptic curves, $\#\Sh(E_i/\Q)[5]$ and $E_i(\Q)$ has no $5$-torsion. It follows that $\#\op{Sel}(E/\Q)\sim 1$. Furthermore, the Tamagawa products $\prod c_v(E_1)=6$ and $\prod c_v(E_1)=16$. It thus follows that both Euler characteristics $\chi^{\pm}(\Gamma, E_i)=1$ for $i=1,2$. We have thus demonstrated the relationship between Euler characteristics via explicit computation.
\subsection{Example 2}\label{example2}
\par We work out an example over the field $F:=\Q(i)$. Let $E_1$ (resp. $E_2$) be the curves $38a1$ (resp. $114b1$), base-changed to $F$. The elliptic curves $E_1$ and $E_2$ are $5$-congruent. The prime $p=5$ splits into $\mathfrak{p}\mathfrak{p}^*$, where $\mathfrak{p}=(2+i)$ and $\mathfrak{p}^*=(2-i)$. Both curves $E_1$ and $E_2$ are supersingular at the primes $\mathfrak{p}$ and $\mathfrak{p}^*$. The Mordell Weil ranks of $E_1$ (resp. $E_2$) over $F$ are $0$ (resp. $1$). Let $\ddag=(\ddag_1, \ddag_2)$ be a signed vector. It follows that the order of vanishing of the signed Selmer group at $T=0$ is given by $r_{E_1}^{\ddag}=0$ and $r_{E_2}^{\ddag}=1$. Since $r_{E_2}^{\ddag}\leq 1$, it follows from Lemma $\ref{truncdefined}$ that the truncated Euler characteristic $\chi_t(\Gamma, E_2)$ is well defined. On the other hand, since $r_{E_1}^{\ddag}=0$, the truncated Euler characteristic $\chi_t(\Gamma, E_1)$ is well defined and coincides with the usual Euler characteristic $\chi(\Gamma, E_1)$. Since $r_{E_1}^{\ddag}<r_{E_2}^{\ddag}$, Theorem $\ref{gammacongruence}$ asserts that $5$ divides $\Phi_{E_1, \Sigma_1}\times \chi(\Gamma, E_1)$. We demonstrate this via direct calculation by showing that $\Phi_{E_1, \Sigma_1}$ is divisible by $5$.
\par

We shall assume that the Selmer groups $\op{Sel}^{\ddag}(E_i/F^{\op{cyc}})$ are cotorsion. The optimal set of primes $\Sigma_1$ is contained in $\{(i+1), 3, 19\}$, the set of primes $v\nmid 5$ at which either $E_1$ or $E_2$ has bad reduction. Both curves $E_1$ and $E_2$ have split multiplicative reduction at $19$, and hence $19\in \Sigma_1$. The prime $19$ is inert in $F$ and its norm is $19^2=361$. The curve $E_1$ has split multiplicative reduction at $19$, hence $a_{19}(E_1)=1$. Recall that $L_{19}(E_1, 1)^{-1}$ is equal to $P_{19}(E_1,19^{-2})$, see the discussion following $\eqref{pvdefinition}$. The characteristic polynomial $P_{19}(E_1,T)$ is equal to $1-a_{19}(E_1)T$. Therefore,  $L_{19}(E_1, 1)^{-1}=\frac{19^2-a_{19}(E_1)}{19^2}=\frac{360}{361}$. This implies that $|L_{19}(E_1,1)|_{5}=5$. Since $19\in \Sigma_{1}$, it follows that $\Phi_{E_1, \Sigma_1}$ is divisible by $5$.
\par Let $F_1$ and $F_2$ be the maximal pro-$p$ abelian extension of $F$ unramified outside $\mathfrak{p}$, resp. $\mathfrak{p}^*$. Both $F_1$ and $F_2$ are $\Z_p$-extensions of $F$. Let $\mathcal{F}_{\infty}$ be the composite of $F_1$ and $F_2$. We discuss results for truncated Euler characteristics over $\mathcal{F}_{\infty}$. Assume that for $i=1,2$, the Selmer group $\op{Sel}^{\ddag}(E_i/\mathcal{F}_{\infty})$ is in $\mathfrak{M}_H(\op{G})$. Recall that $\op{G}=\op{Gal}(\mathcal{F}_{\infty}/F)$ and $H=\op{Gal}(\mathcal{F}_{\infty}/F^{\op{cyc}})$. The conditions of Hypothesis $\eqref{Finfhyp}$ are satisfied for $\mathcal{F}_{\infty}$. Corollary $\ref{lastcor}$ asserts that $\chi_t(\op{G}, E_i)$ is equal to $\chi_t(\op{G}, E_i)$ for $i=1,2$.

\end{document}